\newcommand{\R}{\mathbb{R}} 
\newcommand{\e}{\varepsilon}
\newcommand{\N}{\mathbb N}
\newcommand{\C}{\mathbb C}
\DeclareMathOperator{\Ind}{ind}
\DeclareMathOperator{\id}{id}
\DeclareMathOperator{\SO}{SO}
\newcommand{\dist}{\operatorname{dist}}
\newcommand{\sgn}{\operatorname{sgn}}
\newcommand{\WM}{W^{1,2}(M)}
\newcommand{\irchi}[2]{\raisebox{\depth}{$#1\chi$}} 
\DeclareRobustCommand{\Chi}{{\mathpalette\irchi\relax}}
\newtheorem{theorem}{Theorem}[section]
\newtheorem*{theorem*}{Theorem}
\newtheorem{lemma}[theorem]{Lemma}
\newtheorem{corollary}[theorem]{Corollary}
\newtheorem{proposition}[theorem]{Proposition}
\newtheorem{claim}{Claim}
\newtheorem*{claim*}{Claim}
\theoremstyle{definition}
\newtheorem{definition}[theorem]{Definition}
\newtheorem*{remark*}{Remark}
\newtheorem{remark}[theorem]{Remark}
\title[Ground states of semilinear elliptic equations]{Ground states of semilinear elliptic equations}
\author[R.~Caju, P.~Gaspar, M.~A.~M.~Guaraco, H.~Matthiesen]{Rayssa Caju, Pedro Gaspar, Marco A.~M.~Guaraco and Henrik Matthiesen }
\address{The University of Chicago}
\email{guaraco@math.uchicago.edu, pgaspar@uchicago.edu, hmatthiesen@uchicago.edu, rayssacaju@gmail.com}
\thanks{The second author was was partially supported by Prof. Andr\'e Neves' Simons Investigator Award.}
\begin{document}

\begin{abstract}
We study solutions of $\Delta u - F'(u)=0$, where the potential $F$ can have an arbitrary number of wells at arbitrary heights, including bottomless wells with subcritical decay. In our setting, \textit{ground state} solutions correspond to unstable solutions of least energy. We show that in convex domains of $\R^N$ and manifolds with $\operatorname{Ric}\geq 0$, ground states are always of mountain-pass type and have Morse index 1. In addition, we prove symmetry of the ground states if the domain is either an Euclidean ball or the entire sphere $S^{N}$. For the Allen-Cahn equation $\e^2\Delta u - W'(u)=0$ on $S^{N}$, we prove the ground state is unique up to rotations and corresponds to the equator as a minimal hypersurface. We also study bifurcation at the energy level of the ground state as $\e\to 0$,  showing that the first $N+1$ min-max Allen-Cahn widths of $S^{N}$ are ground states, and we prove a gap theorem for the corresponding $(N+2)$-th min-max solution.
\end{abstract}

\maketitle

\section{Introduction}

We study solutions $u:\Omega \to \R$ to the semilinear elliptic equation
\begin{equation}\label{eq1}
 \begin{cases}\Delta u - f(u)=0 & \quad \mbox{in} \quad \Omega \\
 \partial_\nu u=0 & \quad \mbox{on} \quad \partial\Omega\end{cases}
\end{equation}
where $\partial_\nu$ is the normal derivative along $\partial\Omega$ and $f=F'$ is the derivative of a Morse function $F$, having finitely many non-degenerate critical points and ends no decaying faster than sub-critically. Some of the configurations that $F$ can have under these  assumptions are illustrated in Figure 1 below. 

\

Our hypothesis on $\Omega$ include strictly convex domains of $\R^N$ with smooth boundary, as well as closed manifolds with non-negative Ricci curvature which is not identically zero. More precisely, we assume:\\
\textit{
\begin{enumerate}
\item [(D)] $\Omega$ is one of the following types of domain:\\
\begin{enumerate}
\item [(D1) ]$\partial \Omega\neq \emptyset$ is smooth and strictly convex with $\operatorname{Ric}\geq 0$ on $\Omega$, or\\
\item [(D2) ]$\partial \Omega=\emptyset$ and $\operatorname{Ric}\geq 0$, with $\operatorname{Ric}> 0$ at some point of $\Omega$.\\
\end{enumerate}
\end{enumerate}
}

 \begin{figure}
  \centering
    \includegraphics[width=0.8\textwidth]{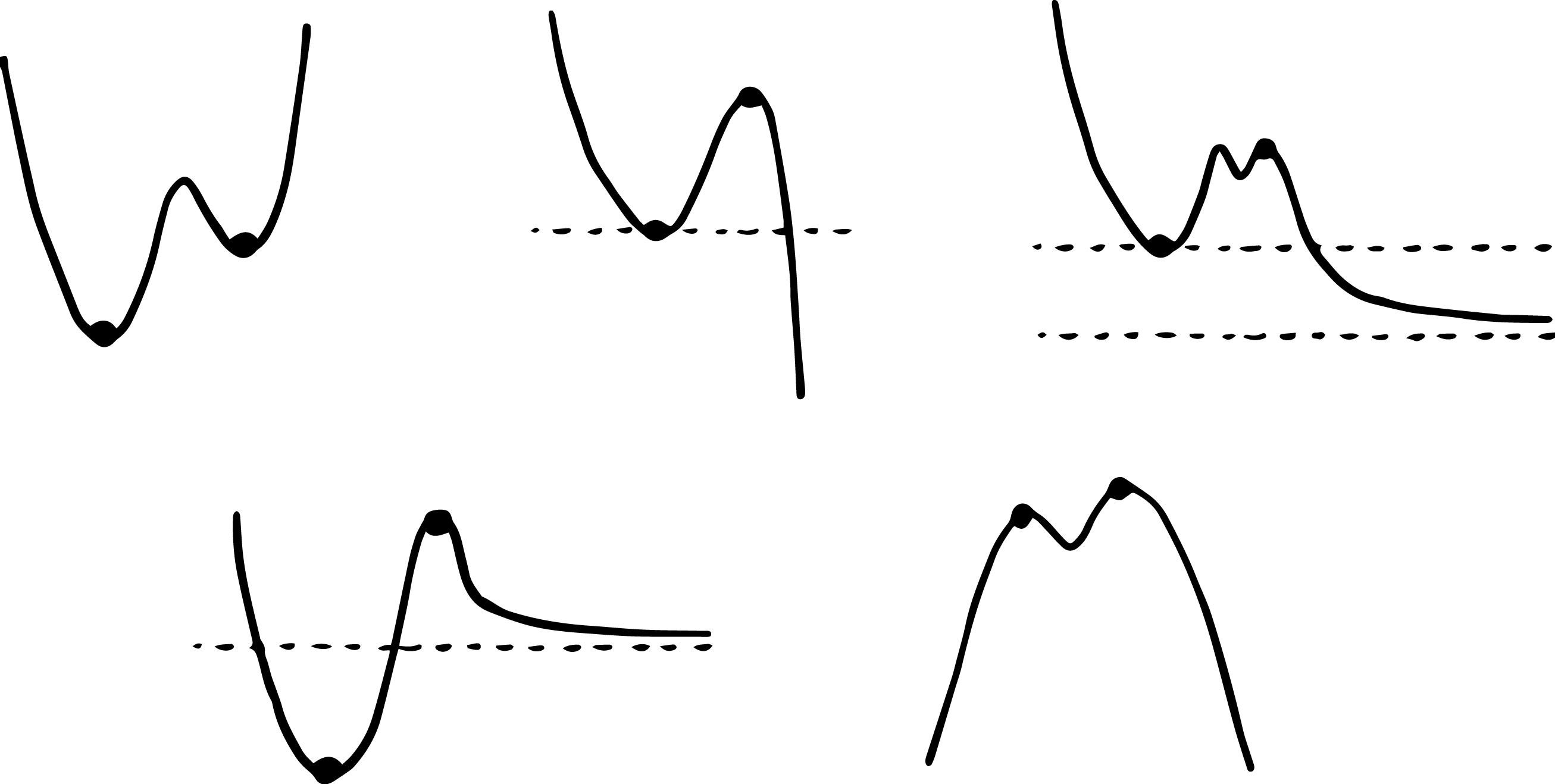}
      \caption{Some possible configurations of $F$}

\end{figure}

Solutions to equation \eqref{eq1} are critical points of the \textit{energy functional} \begin{equation}\label{functional}
E(u)=\int_\Omega \frac{|\nabla u|^2}{2}+F(u), \quad u \in W^{1,2}(\Omega).
\end{equation} The \textit{Morse index} of a solution $u$ is defined as the number of negative eigenvalues of $E''(u)(\cdot,\cdot)$, i.e.\ the number of negative modes of the linearization of \eqref{eq1} around $u$. When the Morse index of $u$ is zero we say that $u$ is \textit{stable}. Otherwise, we say that $u$ is \textit{unstable}.

\

The structure of stable solutions of \eqref{eq1} is well understood.
It is known that 
under hypothesis (D), stable solutions are the constants corresponding to local minima of $F$ (see \cite{EuclideanStable,Farina} and Theorem \ref{stableconst}).
 On the other hand, unstable solutions are abundant and their behavior is governed by the geometry of $\Omega$ (see \cite{PacardRitore, DelPino, CajuGaspar} for the case of the Allen-Cahn equation). 

\

In the present work, we pursue a general study of the simplest unstable solutions of equation \eqref{eq1}. Often called \textit{ground states}\footnote{We adopt this nomenclature for consistency with the literature on Partial Differential Equations, but we warn the reader that in Quantum Field Theory the therm \textit{ground state} is used to refer to the lowest critical energy level.}: these are solutions minimizing the energy among the set of all unstable solutions. Under the assumptions above, we characterize ground states as mountain pass critical points of Morse index 1. 

\

For some of the potentials we consider, this can not be obtained immediately by means of classical mountain pass methods (i.e.\ the existence of a mountain pass barrier and a Palais-Smale-type condition) since they fail to satisfy these requirements. To overcome this problem we prove a priori estimates for solutions of equation \eqref{eq1}. These estimates, which we believe are interesting on their own, imply that 
\begin{center}
\textit{potentials that do not decay faster than quadratically behave exactly like potentials with no decay at all}.
\end{center} In this way, we can study unstable solutions for a general potential $F$ with decay, as solutions for a modified potential $F^*$ better suited for the classical mountain pass theorem (see Figure 2 below).

\begin{figure}
  \centering
    \includegraphics[width=0.45\textwidth]{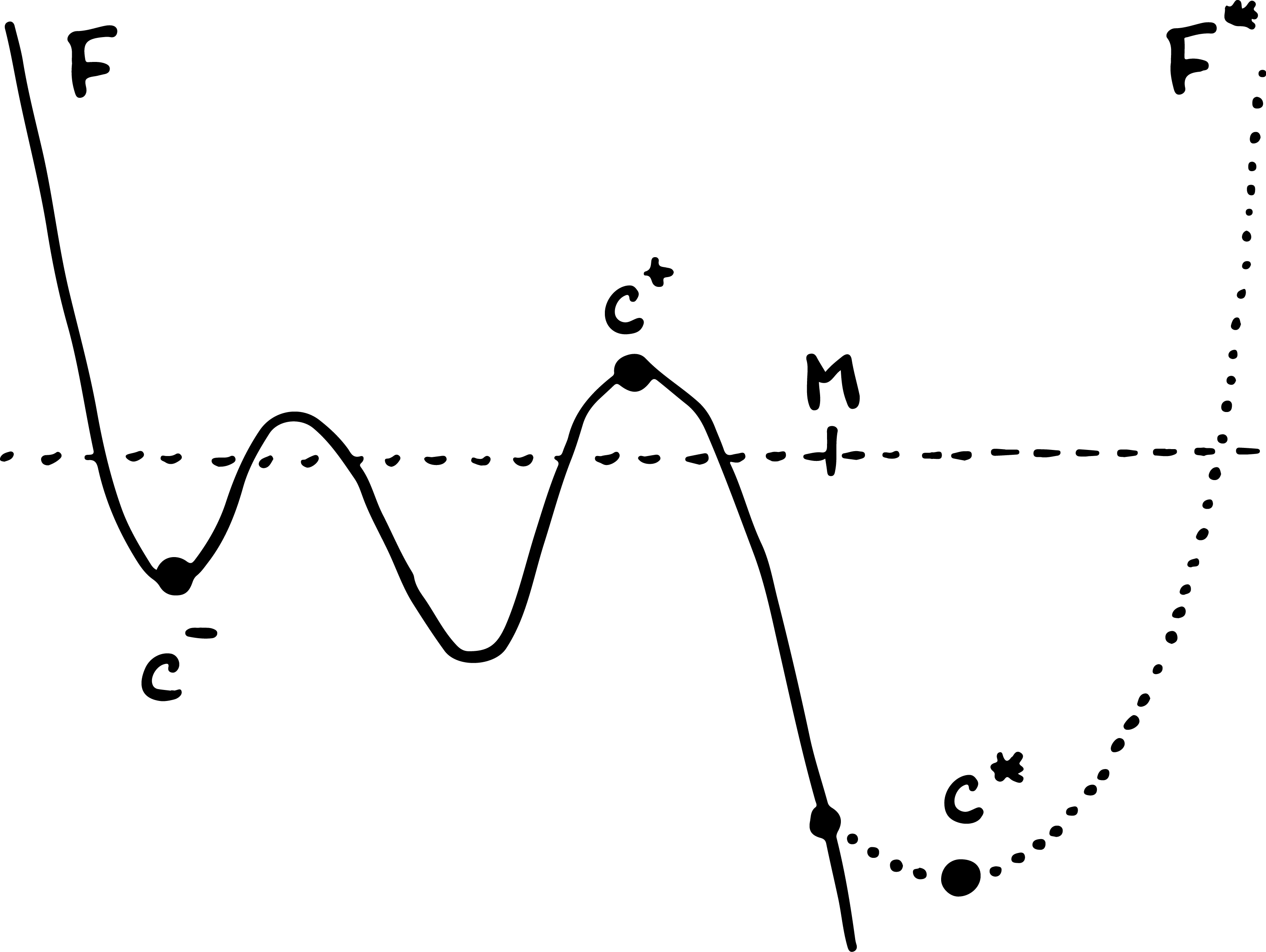}
      \caption{If $F$ does not decay faster than quadratically and $M$ is large enough, then  $F$ and $F^*$ induce the same unstable solutions}

\end{figure}

\

The mountain pass characterization of ground states allows us to derive some of their geometric properties. For example, we give simple conditions on $F$ that guarantee that ground states are non-constant, and we show that ground states are symmetric when the domain $\Omega$ is symmetric, e.g. a ball in $\R^N$ or the closed sphere $S^N$.
\\

 We then proceed to obtain improved symmetry results given additional structure of the nonlinearity.
 More specifically, we concentrate on the Allen-Cahn equation 
 $$
 \e^2 \Delta u- W'(u)=0
 $$ 
 on $S^N$, where $W$ is a symmetric double well potential. 
In this case, we can improve the symmetry results for ground states to uniqueness up to ambient isometries.
From this, we can draw a number of conclusions on the qualitative behavior of low energy solutions in parts in strong analogy with results for minimal hypersurfaces.

\

Firstly, in the case of the round sphere $S^N$ we can show that the ground state has nodal set precisely the equator, which is the least area minimal hypersurface.
Moreover, there is a gap of definite size to the next highest energy level achieved by a solution.
This, in particular, provides precise information on the first few Allen-Cahn widths, as defined in \cite{GasparGuaraco}.
 We show that the first $N+1$ widths are at the ground state level and there is a gap between these and the $(N+2)$-th width. 
 Under the analogy with minimal hypersurfaces, this is equivalent to saying that on $S^N$ the first $N+1$ widths of the area functional, as defined by Marques-Neves \cite{MarquesNevesRic}, correspond to the area of the equator. Following this analogy for $N=3$, one should expect that the 5-th width of $S^3$ is attained by a solution related to Clifford torus. We construct the candidate solution, but the problem of characterizing it as the one realizing the 5-th width remains open.
 This is related to the lack of an Urbano type theorem \cite{Urbano} for solutions of the Allen--Cahn equation.
 
 \

In a slightly different direction, we also obtain some control on the bifurcation behavior of solutions to the Allen--Cahn equation as $\e\to 0$. 
For the three-dimensional sphere, we determine the two largest values of $\e>0$ at which new solutions appear.
These are connected to low eigenvalues of the Laplacian.
While at the first bifurcation only ground states appear from constant solutions, at the second bifurcation, maybe surprisingly, there are at least two new solutions appearing.
One of them corresponds geometrically to the Clifford torus and the other to the intersection of $S^3$ with two orthogonal hyperplanes.

\

Both, our gap and bifurcation results depend in a quite subtle way on the symmetries of the ambient manifold.
For instance, we are at present not able to obtain the corresponding gap results, in fact not even the improved symmetry result, in the case of Euclidean balls.
The main difference to the case of the sphere is the the orbits of a point under the symmetry group of a ground state are much smaller than in the case of the sphere.
Similarly, our bifurcation analysis only applies to the three dimensional case at this point for related dimensional reasons.

\

The theory of ground states for semilinear elliptic equations in $\R^n$ has a long history and vast literature which we do not attempt to cover. The work of Berestycki and Lions  \cite{BerestyckiLions}, is one of the earliest and most influentials of the area. They study positive solutions in $\R^N$ assuming $F$ has a mountain-pass geometry in the sense that it has a non-degenerate minimum at $0$ with $F(0)=0$, is bounded from below by a subcritical polynomial, and there is a positive number $t>0$, where $F(t)=0$ (see hypothesis (1.1)-(1.3) in \cite{BerestyckiLions}). Some of their methods were based on the earlier work of Coleman, Glazer and Martin \cite{ColemanGlaserMartin}, which in turn was motivated by Coleman's study of vacuum instability in Minkowski space \cite{Coleman}. Despite the fact that ground states are not local minima, in both \cite{ColemanGlaserMartin} and \cite{BerestyckiLions} the scale invariance of $\R^N$ allows them to reduce the problem to a restricted minimization problem. In geometric terms, this is analogous to constructing a hypersurface with constant mean curvature  $H_0>0$, by first solving a problem of area minimization with a volume constrain, obtaining a sphere with curvature $H$ and then rescaling the sphere so that it has curvature $H_0$. In this way, it is possible to avoid the use of mountain pass arguments in the case of $\R^N$. Only until more recently the mountain pass characterization of the ground states was proved in this setting \cite{Jeanjean}. However, even in this case the scale invariance of $\R^N$ is used in the main arguments which cannot be reproduced in general domains.  

\

In recent years, there has been growing interest in proving similar results on domains that are not scale invariant, for example the Euclidean sphere $\Omega=S^N$. For Differential Geometry and Geometric PDE, the question is relevant under light of strong analogies between minimal surfaces and solutions to the stationary Allen-Cahn equation \cite{GasparGuaraco,GasparGuaraco2,ChodoshMantoulidis,Dey,Constante}. For Mathematical Physics, the question appears once more in the study of vacuum instability in curved space-times. For example, in the case of De Sitter space, critical points of a quantity often called \textit{Euclidean action} correspond to solutions of a semilinear elliptic equation over the Euclidean sphere $S^4$. These minimizers, known as \textit{bounce solutions}, are often assumed to have $\operatorname{O}(4)$ symmetry, a monotone profile and Morse index 1, but no proof of these facts has been given \cite{Thermal,Bounces}. The methods we develop are also motivated by the intention of filling this gap in the theory. These applications are discussed in an article under preparation by the third author and J. E. Camargo-Molina \cite{Eliel}.

\section{Main Results}

Let $F:\R\to\R $ be a Morse function with finitely many critical points, at least one of which is a local maximum. Denote by $c^-<c^+$ the smallest and largest critical points of $F$. Assume: \\
\textit{
\begin{enumerate}
\item [(A)] $F$ satisfies one of the following:\\
\begin{enumerate}
\item [(A1)]  $c^-$ and $c^+$ are local minima of $F$,\\
\item [(A2)] \textit{linear lower bound}: $\exists C>0$, such that $$-\sgn(t)f(t)\leq C(1+|t|), \ \ \forall t\in \R,$$
\item [(A3)] \textit{superlinear and subcritical decay}:
\begin{itemize} 
\item $c^-$ is a local minimum,
\item $\lim_{t\to+\infty} f(t)/t=-\infty$, 
\item $\exists C>0, R_0>0, \rho\in[0,1/2)$ and $p\in(2,\frac{2N}{N-2})$, such that $$-f(t)\leq C(1+|t|^{p-1}), \ \ \forall t>R_0$$ and $$-\rho tf(t)+F(t)-t^2\geq 0, \ \ \forall t>R_0.$$
\end{itemize}
\end{enumerate}
\end{enumerate}
}

\begin{remark*}
\

\begin{itemize}
\item A simple application of the maximum principle shows that for ground states to exist it is necessary for $F$ to have at least one local maximum. 
\item (A1) implies (A2). However, we separate them for convenience in stating the results below. In fact, all the configurations in Figures 1 are covered under (A2): $F(t)$ is allowed to grow freely as $|t|\to\infty$, but can decay at most quadratically. This includes potentials with an arbitrary number of wells, at arbitrary heights, with possibly some bottomless wells decaying no faster than quadratically.
\item (A3) only admits the configuration illustrated on the middle of the first row in Figure 1, with possibly additional critical points. It is satisfied when $F$ has the form $F(t)=-c|t|^{p}+G(t)$, for $t$ large enough, where $g(t)=G'(t)=o(t^{p-1})$. In this case, one can choose $\rho \in(1/p,1/2)$. The second inequality in (A3) is similar to the often called Ambrosetti-Rabinowitz condition.
\end{itemize}
\end{remark*}

\

Our first main result is the existence of ground states:

\begin{theorem}\label{thmexistence} Under hypothesis (A) and (D) equation \eqref{eq1} admits at least one ground state. 
\end{theorem}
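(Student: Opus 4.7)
The plan is to apply a mountain pass argument to a suitably modified potential $F^*$, use the a priori $L^\infty$ estimates advertised in the introduction to transfer the resulting unstable solution back to \eqref{eq1}, and then realize the ground state level via a minimizing sequence of unstable solutions.

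The first step is to produce at least one unstable solution of \eqref{eq1}. Under (A1) the potential $F$ already has a two-well mountain pass geometry and one sets $F^*=F$. Under (A2) or (A3), the a priori estimates give a threshold $M_0$ larger than all critical points of $F$ and larger than the $L^\infty$ norm of any unstable solution of \eqref{eq1} with energy below a prescribed level. Replace $F$ outside $[-M_0,M_0]$ by an extension $F^*$ with two additional non-degenerate local minima at $\pm M_0'$ with $M_0'>M_0$ and with at most quadratic (in (A2)) or subcritical (in (A3)) growth at infinity, as in Figure 2. The modified functional $E^*$ then satisfies the Palais-Smale condition on $W^{1,2}(\Omega)$ and has a mountain pass geometry between the constants $u\equiv\pm M_0'$: any local maximum $\bar c$ of $F^*$ with $F^*(\bar c)>\max(F^*(\pm M_0'))$ creates a barrier via the linear path $t\mapsto (1-t)(-M_0')+tM_0'$, whose energy attains $|\Omega|\max_{[-M_0',M_0']}F^*$. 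The Ambrosetti--Rabinowitz theorem then yields a critical point $v^*$ of $E^*$ with Morse index at least $1$ and with energy strictly above both wells. The a priori estimate forces $\|v^*\|_\infty<M_0$, so $v^*$ solves the original equation \eqref{eq1} and is unstable for $E$. In particular, the set $\mathcal{U}$ of unstable solutions of \eqref{eq1} is nonempty.

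Next I would realize the ground state level. Define $c_{gs}=\inf_{u\in\mathcal{U}}E(u)$. The a priori $L^\infty$ estimates and the resulting $W^{1,2}$ control bound $E$ from below on $\mathcal{U}$, so $c_{gs}>-\infty$. Choose a minimizing sequence $u_n\in\mathcal{U}$ with $E(u_n)\to c_{gs}$; by the a priori bound and standard elliptic regularity, $\{u_n\}$ is uniformly bounded in $C^{2,\alpha}(\Omega)$, and a subsequence converges in $C^2$ to some $u_\infty$ solving \eqref{eq1} with $E(u_\infty)=c_{gs}$.

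The main obstacle is showing that $u_\infty\in\mathcal{U}$, i.e.\ that instability is preserved in the limit. The first eigenvalue $\lambda_1(u)$ of the linearized operator $L_u=-\Delta+f'(u)$ (with Neumann boundary conditions when applicable) is continuous with respect to $C^0$ convergence of the zeroth order coefficient, so $\lambda_1(u_\infty)=\lim_n\lambda_1(u_n)\leq 0$. If $\lambda_1(u_\infty)<0$ strictly, we are done. In the borderline case $\lambda_1(u_\infty)=0$, the solution $u_\infty$ would be stable in the non-strict sense, so the rigidity statement for stable solutions (Theorem \ref{stableconst}) forces $u_\infty$ to be a constant at a local minimum $c$ of $F$; but then $\lambda_1(u_\infty)=F''(c)>0$ by the non-degeneracy assumption on the Morse function $F$, a contradiction. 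Hence $u_\infty$ is an unstable solution of \eqref{eq1} realizing $c_{gs}$, which is the desired ground state.
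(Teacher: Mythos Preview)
Your overall strategy for steps 2 and 3 matches the paper's: take an energy-minimizing sequence of unstable solutions, pass to a limit by compactness, and argue the limit remains unstable because stable solutions are non-degenerate constant local minima of $F$, hence isolated. Your eigenvalue-continuity phrasing of step 3 is a correct variant of the paper's ``stable solutions are strict local minimizers'' argument.

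The significant difference is in step 1. You invoke a mountain pass argument on a modified potential $F^*$ and then pull back via a priori estimates to exhibit a single unstable solution. The paper bypasses all of this with a one-line observation: by hypothesis $F$ has at least one local maximum $c$, and the constant function $u\equiv c$ is automatically a solution of \eqref{eq1} with $E''(c)(a,a)=F''(c)\int_\Omega a^2<0$ for any constant $a$, hence unstable. So the set $\mathcal{U}$ is nonempty for free, and the mountain pass machinery is only needed later, for the characterization in Theorems \ref{thmgs1} and \ref{thmgs2}, not for mere existence.

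Your detour is not just longer but has a soft spot under (A3): you write that ``the a priori estimates give a threshold $M_0$ \dots\ larger than the $L^\infty$ norm of any unstable solution with energy below a prescribed level,'' but Proposition \ref{apriori} is stated only under (A2). Under (A3) the paper obtains compactness from Palais--Smale (Corollary \ref{a3compact}), which gives bounds only \emph{after} fixing an energy ceiling; since your $M_0$ is chosen before you know the mountain pass level for $E^*$, and that level in turn depends on how you build $F^*$ from $M_0$, there is a circularity you would need to unwind. This is repairable, but the paper's route avoids the issue entirely.
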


After this, we present a useful mountain pass characterization of ground states that will later lead us to understand their geometric properties better. In order to state this characterization, we must introduce the following definition:

\begin{definition}\label{defoptimal}
Given $u\in W^{1,2}(\Omega)$, a continuous map $h:[-1,1]\to W^{1,2}(\Omega)$ joining $h(-1)$ and $h(+1)$ is said to be \textit{optimal at $u$ with respect to $E$} if
\begin{enumerate}
\item [a)] $u=h(0)$,
\item [b)] $E(u)=E(h(0))>E(h(t)),$ for all $0<|t|\leq1$
\item [c)] the map $E(h(t))$, $t\in[-1,1]$, is smooth near $t=0$, and 
\item [d)] $\frac{d^2}{dt^2}E(h(t))|_{t=0}<0$.
\end{enumerate}
\end{definition}

Our first variational characterization is the following, in particular generalizing
\cite[Theorem 2.1]{ACClosed}, which covers the case of the Allen--Cahn equation discussed in more detail below.

\begin{theorem}\label{thmgs1}
Assume (D) and either (A1) or (A3). If $u$ is a ground state of equation \eqref{eq1}, then
\begin{enumerate}
\item $u$ has Morse index 1 and is of mountain pass type with respect to $E$. More precisely, there exist constants $s^-<s^+$, such that $$E(u)=\inf_{h\in \Gamma} \sup_{t\in[-1,1]} E(h(t))>E(s^\pm)$$ where $\Gamma=\{h:[-1,1]\to W^{1,2}(\Omega) \ | \ h \text{ continuous, with } h(\pm1)=s^\pm \}.$ 
\item Moreover, there is a continuous path $h:[-1,1]\to W^{1,2}(\Omega)$ joining $s^-$ and $s^+$ which is optimal at $u$ with respect to $E$.
\end{enumerate}
\end{theorem}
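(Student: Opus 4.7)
My plan is to reduce the problem to a setting where the classical mountain pass machinery applies, build the required path explicitly from a first negative eigendirection of $E''(u)$, and then deduce both the mountain pass characterization and Morse index one from the ground state property of $u$.

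Under (A3), the a priori estimates announced in the introduction let me replace $F$ by the capped potential $F^*$ of Figure 2; this modified functional has the same unstable critical points with the same Morse indices as $F$, and it satisfies the Palais--Smale condition and the classical deformation lemma on $W^{1,2}(\Omega)$. From now on I work with $F^*$ but drop the asterisk. Since $u$ is unstable, the linearised operator $L=\Delta-f'(u)$ with Neumann boundary condition has a smallest eigenvalue $\lambda_1<0$, with a strictly signed eigenfunction $\phi>0$ (standard positivity of the first Neumann eigenfunction of a Schr\"odinger operator on a connected domain). For small $\e>0$, put $u_0^\pm:=u\pm\e\phi$, so $u_0^-<u<u_0^+$ pointwise and $E(u_0^\pm)<E(u)$. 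The $L^2$-gradient flow $\partial_s v=\Delta v-f(v)$ from each $u_0^\pm$ stays, by the parabolic maximum principle applied against the stationary solution $u$, on the correct side of $u$; Palais--Smale together with strict energy decrease force it to subconverge to critical points $s^\pm$ with $E(s^\pm)<E(u)$ and $s^-\leq u\leq s^+$ pointwise. Since $u$ is a ground state these $s^\pm$ must be stable, hence constants by the classification of stable solutions cited after (D), and the instability of $u$ precludes $s^-=s^+$, so $s^-<s^+$.

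I define $h:[-1,1]\to W^{1,2}(\Omega)$ by concatenating the reversed orbit from $s^-$ to $u_0^-$, the straight segment $t\mapsto u+t\phi$ across $u$, and the orbit from $u_0^+$ to $s^+$, reparametrised so that $h(0)=u$ and $h(\pm1)=s^\pm$. The expansion
\[
E(u+t\phi)=E(u)+\tfrac{1}{2}\langle E''(u)\phi,\phi\rangle\, t^2+O(t^3),
\]
with $\langle E''(u)\phi,\phi\rangle<0$, yields properties (a), (c), (d) of Definition~\ref{defoptimal} and the strict inequality in (b) near $0$, while strict energy decay along the flow segments provides (b) elsewhere. This proves (2). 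Set $c:=\inf_{h\in\Gamma}\sup_t E(h)$; the same $h$ gives $c\leq E(u)$. Because $s^\pm$ are distinct constants that are strict local minima of $E$ in $W^{1,2}(\Omega)$, any path in $\Gamma$ must exit a small $W^{1,2}$ ball about $s^+$, so $c>\max\{E(s^+),E(s^-)\}$ and the mountain pass theorem applies: if $c<E(u)$ held, it would produce a critical point at level $c$ which is not a local minimum, hence unstable, contradicting the ground state hypothesis. Thus $c=E(u)$.

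To improve Morse index $\geq1$ to exactly $1$, suppose $E''(u)$ admitted a second negative eigenvalue $\lambda_2<0$ with eigenfunction $\psi\perp\phi$, and perturb the optimal path to $\tilde h(t):=h(t)+\delta\eta(t)\psi$ for a smooth bump $\eta$ supported near $0$ with $\eta(0)=1$. The first order cross term $\delta\eta(t)\langle E'(h(t)),\psi\rangle$ vanishes to leading order in $t$ because $E'(u)=0$ and $\langle E''(u)\phi,\psi\rangle=\lambda_1\langle\phi,\psi\rangle=0$, leaving
\[
E(\tilde h(t))\leq E(h(t))+\tfrac{1}{2}\delta^2\eta(t)^2\langle E''(u)\psi,\psi\rangle+O(\delta^3+t^3),
\]
so that $\sup_t E(\tilde h)\leq E(u)-c_0\delta^2<E(u)$ for $\delta$ small, contradicting $c=E(u)$. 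The main technical obstacle is making the gradient flow construction rigorous, namely passing from subconvergence to a clean $W^{1,2}$ trajectory joining $u_0^\pm$ to $s^\pm$ in an order preserving fashion; this is precisely where the reduction to $F^*$ and the classification of stable solutions as constants play their essential role. A secondary point is that in the index step the cubic errors must be dominated uniformly on the support of $\eta$ by the quadratic drop, which is a standard matter of coordinating $\delta$ with the width of the bump.
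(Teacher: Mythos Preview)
Your argument under (A1) is essentially the paper's: the path built from the first eigenfunction and the parabolic flow is exactly Lemma~\ref{optimalfam}, the comparison $c\le E(u)$ together with ``if $c<E(u)$ the mountain pass produces a cheaper unstable solution'' is exactly the paper's Case~1, and your index-two perturbation is a one-parameter variant of the two-parameter detour in Lemma~\ref{morse}. So for (A1) the proposal is correct and follows the same route.

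The gap is in your reduction under (A3). The a~priori $L^\infty$ estimates you invoke are Proposition~\ref{apriori}, which is proved under (A2) (at most quadratic decay); under (A3) the potential decays superlinearly and no such uniform bound on unstable solutions is available. Consequently your claim that ``$F^*$ has the same unstable critical points as $F$'' is unjustified: a mountain pass critical point $v$ of $E^*$ may well take values in $(M,c^*]$, where $F^*\neq F$, so $v$ need not solve \eqref{eq1}, and your contradiction with the ground state hypothesis (which concerns unstable solutions of $E$, not $E^*$) does not fire. By working entirely with $E^*$ you end up proving the $E^*$--mountain pass characterization, i.e.\ Theorem~\ref{thmgs2}, not Theorem~\ref{thmgs1}. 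The paper avoids this by treating (A3) differently: it uses $F^*$ only to build the optimal path (exploiting $F^*\ge F$, hence $E^*\ge E$ with equality at $u$, so an $E^*$-optimal path is automatically $E$-optimal), and for the mountain pass it works with $E$ itself, using the Palais--Smale condition for $E$ that follows from the Ambrosetti--Rabinowitz type inequality in (A3) (Lemma~\ref{lem_ps}). You should separate these two roles of $F^*$ in the same way.
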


In general, we have the following result:

\begin{theorem}\label{thmgs2}
Assume (D) and (A). Let $u$ be a ground state for equation \eqref{eq1}. Then, there exists $F^*:\R\to\R$ a Morse function with finitely many critical points, satisfying (A1) and such that (1) and (2) of Theorem \ref{thmgs1} holds with $E^*(\cdot)=\int_\Omega \frac{|\nabla \cdot|^2}{2}+F^*(\cdot)$ in place of $E$. \end{theorem}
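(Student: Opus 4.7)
The plan is to reduce Theorem \ref{thmgs2} to the previously established Theorem \ref{thmgs1} by replacing $F$ with a modified Morse potential $F^*$ which agrees with $F$ on a sufficiently large interval $[-M_0, M_0]$, satisfies (A1), and has the property that $u$ is still a ground state of the associated energy $E^*$. Once this is achieved, Theorem \ref{thmgs1} applied to $u$ with respect to $E^*$ produces conclusions (1) and (2) directly. The analytical heart of the argument is an a priori $L^\infty$-estimate showing that every low-energy unstable solution — of either $E$ or of any admissible modification $E^*$ — takes values in a bounded interval $[-M, M]$, with $M$ depending only on $\Omega$ and on $F|_{[-M, M]}$.

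For the a priori estimate, under (A2) the one-sided linear bound $-\sgn(t) f(t) \leq C(1+|t|)$ makes any large-value solution a subsolution of a linear equation, and a Stampacchia / De Giorgi iteration combined with an energy ceiling (e.g.\ $E(u) \leq E(c^\pm)$) yields the desired $L^\infty$ bound. Under (A3), the Ambrosetti--Rabinowitz-type integral inequality in the second line of (A3) turns the energy bound into $L^p$ control, and the subcriticality $p < 2N/(N-2)$ bootstraps through standard elliptic regularity to $L^\infty$. The central requirement in both cases is that the constant $M$ depends on $F$ only through the shape of $F$ on the compact interval $[-M, M]$, and not on its decay beyond.

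Given this $M$, I would fix $M_0 > M$ and smoothly modify $F$ outside $[-M_0, M_0]$ to produce a Morse function $F^*$ with finitely many critical points, agreeing with $F$ on $[-M_0, M_0]$, growing to $+\infty$ at $\pm\infty$, and whose only critical points in $\R \setminus [-M_0, M_0]$ are two non-degenerate local minima $c^{-}_{*}, c^{+}_{*}$. Then $F^*$ satisfies (A1). Theorem \ref{thmexistence} produces a ground state $u^*$ of $E^*$; applying the a priori estimate to $F^*$ and using that $F^* \equiv F$ on $[-M, M]$ yields $\|u^*\|_{L^\infty} \leq M < M_0$. Therefore $u^*$ is an unstable solution of \eqref{eq1} with $E(u^*) = E^*(u^*) \geq E(u)$; conversely $u$ is an unstable critical point of $E^*$ with the same Morse index (since $F^* = F$ in a neighbourhood of its range, the second variations agree), giving $E^*(u) = E(u) \geq E^*(u^*)$. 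Combined, $u$ attains the ground-state level of $E^*$, so Theorem \ref{thmgs1} applies to $u$ with respect to $E^*$ and yields (1) and (2).

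The hard part is the a priori estimate, particularly under (A3), where the superlinear subcritical decay of $F$ means that high-Morse-index solutions can generically concentrate and fail to be bounded. Controlling $\|v\|_\infty$ uniformly really hinges on the energy ceiling intrinsic to $v$ being a (candidate) ground state, and on extracting a constant that does not see any modification of $F$ beyond $[-M, M]$ — this self-consistency is what closes the reduction above.
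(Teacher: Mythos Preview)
Your overall reduction strategy --- modify $F$ outside a large interval to an $F^*$ satisfying (A1), show that the unstable critical sets of $E$ and $E^*$ coincide, and then invoke Theorem~\ref{thmgs1} --- is exactly what the paper does under (A2) (its Case~2). The gap is in your proposed mechanism for the a~priori $L^\infty$ bound.

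Under (A2) your sketch (linear subsolution plus De~Giorgi/Stampacchia iteration plus an energy ceiling) does not close. The one-sided bound gives $(\Delta + C)(u+1) \geq 0$ where $u$ is large, but iteration only converts an $L^2$ bound into an $L^\infty$ bound, and the energy ceiling $E(u)\le K$ yields at best $\int|\nabla u|^2 \le K' + C\int u^2$, which is vacuous when the quadratic decay constant $C$ exceeds $\lambda_1(\Omega)$ --- and (A2) places no restriction on $C$. In particular your argument never uses hypothesis~(D), whereas the paper's estimate (Proposition~\ref{apriori}) is a $P$-function maximum principle for $P=\tfrac12|\nabla u|^2 - F(u)$ that relies essentially on $\operatorname{Ric}\ge 0$ and strict convexity of $\partial\Omega$. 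That argument forces $\max P$ to occur at a critical point of $u$, then combines the resulting gradient bound with a Gronwall argument across level sets to control $u_{\max}$; crucially, the output depends on $F$ only through the linear constant $C$ and through $f|_{[k^-,k^+]}$, which is precisely the self-consistency you need when passing to $F^*$.

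Under (A3) your AR-plus-bootstrap does bound the specific ground state $u$ of $E$, but your next step --- applying ``the a~priori estimate to $F^*$'' --- fails as written: $F^*$ satisfies (A1), not (A3), so your (A3) estimate is unavailable, and your (A2) estimate is the flawed one above. The paper sidesteps this entirely (its Case~3): it picks $M>u_{\max}$ for the given $u$, builds $F^*$ via Lemma~\ref{fstar} with $F^*\geq F$, constructs the optimal family for $E^*$ directly from Proposition~\ref{optimalfam}, and uses $E^*\geq E$ together with $E^*(u)=E(u)$ to see it is optimal for $E$ as well; the mountain pass level is then identified using Palais--Smale for $E$ itself (Lemma~\ref{lem_ps}). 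No uniform comparison of the unstable solution sets of $E$ and $E^*$ is needed in this case.
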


\

Some of the steps in the proof of Theorem \ref{thmgs2} are interesting on their own. 
As mentioned in the introduction, we cannot guarantee a mountain pass geometry or a Palais-Smale condition for all the configurations in Figure 1. 
We overcome this by proving a priori estimates for unstable solutions of \eqref{eq1}. 
Informally, these estimates say that \textit{bottomless wells that decay no faster than quadratically behave exactly like finite wells}.

\

More precisely, let  $u^-$ and $u^+$ be the smallest unstable critical points of $F$. For simplicity defining the proposition below, denote  $k^-=\min(0,u^-)$ and $k^+=\max(0,u^+)$. In particular, all local maxima of $F$ lie on $[k^-,k^+]$, but there could be local minima of $F$ outside of this interval. We have the following a priori estimates:

\begin{proposition}\label{apriori}
Under hypothesis (A2) and (D), there exists a positive constant $M_0=M_0(\Omega,C,K)$ such that for any $u$ unstable solution to \eqref{eq1} it holds $$\|u\|_{L^\infty(\Omega)}+\|\nabla u\|_{L^\infty(\Omega)}\leq M_0,$$ where $K=\max\{|k^-|,k^+,\max_{[k^-,k^+]}|f|\}$ and $C$ is the constant from hypothesis (A1). 
\end{proposition}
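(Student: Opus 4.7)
The plan is to first reduce the gradient bound to an $L^\infty$ bound on $u$: once $\|u\|_{L^\infty}\leq R$, the right-hand side $f(u)$ is bounded in $L^\infty$ by $\sup_{|t|\leq R}|f(t)|$, and Neumann Schauder estimates (using smoothness of $\partial\Omega$ in case (D1) or compactness in case (D2)) yield $\|\nabla u\|_{L^\infty}\leq C(R,\Omega)$. So the task is to bound $\|u\|_{L^\infty}$ in terms of $\Omega$, $C$, and $K$.

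The constant case is handled a priori: if $u\equiv c$, then $f(c)=0$ and instability forces $f'(c)\leq 0$, so $c$ is not a local minimum of $F$, hence $c\in[u^-,u^+]\subset[k^-,k^+]$ and $|c|\leq K$. Otherwise, $u$ is non-constant and the Hopf boundary point lemma, applied with strict convexity of $\partial \Omega$ in case (D1) or on a closed manifold in case (D2), forces the extrema of $u$ to be attained in the interior. This yields $f(\max u)\leq 0$ and $f(\min u)\geq 0$. I focus on $M:=\max u$; if $M\leq k^+$ there is nothing to prove, and if $M>k^+\geq u^+$ then monotonicity of $F$ past its largest critical point $u^+$, combined with $f(M)\leq 0$, forces $F$ to decrease past $u^+$ (the ``right bottomless well'' scenario), with $f\leq 0$ on $[u^+,M]$. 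In this regime, hypothesis (A2) reads $|f(u)|\leq C(1+u)$.

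Testing $\Delta u=f(u)$ against $v:=(u-T)_+$ for $T=u^++\delta$ with a small fixed $\delta>0$, and integrating by parts with the Neumann condition, gives the energy inequality
\[
\int_\Omega|\nabla v|^2=\int_{\{u>T\}}v\,(-f(u))\leq C(1+K)\,\|v\|_{L^1}+C\|v\|_{L^2}^2.
\]
Since $\{u\leq T\}$ has positive measure (otherwise $\Delta u\leq 0$ everywhere plus the Neumann condition would force $u$ to be constant), a Sobolev--Poincar\'e inequality relative to this set, combined with a Moser iteration tailored to linear-growth forcing, converts the inequality into an $L^\infty$ bound on $v$ in terms of $\|v\|_{L^2}$; the quadratic-decay threshold in (A2) is precisely the borderline at which this iteration still closes.

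The main obstacle is ensuring that the Poincar\'e step is uniform in $M$, equivalently, that $|\{u\leq T\}|\geq c_0(\Omega,C,K)$ independently of how deep the bottomless well is. This is where instability enters crucially. Bochner's formula combined with Kato's inequality and $\operatorname{Ric}\geq 0$ yields $\Delta|\nabla u|\geq f'(u)\,|\nabla u|$ on $\Omega$, with $\partial_\nu|\nabla u|\leq 0$ on the strictly convex $\partial\Omega$ in case (D1), so $|\nabla u|$ is a nonnegative subsolution of the linearized operator $\Delta-f'(u)$. Since $u$ is unstable, the principal eigenfunction $\phi_1>0$ of $-\Delta+f'(u)$ with the same boundary condition as $u$ has eigenvalue $\lambda_1<0$. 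Comparing $|\nabla u|$ with $\phi_1$, for example through a Picone-type identity applied to the quotient $|\nabla u|/\phi_1$, prevents $u$ from concentrating into an arbitrarily deep bottomless well and produces the required lower bound on $|\{u\leq T\}|$. Combining these steps yields $M\leq u^++C_0(\Omega,C,K)\leq K+C_0$, and a symmetric argument on $\min u$ completes the $L^\infty$ estimate.
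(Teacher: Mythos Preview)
Your strategy of first bounding $\|u\|_{L^\infty}$ and then deducing the gradient bound via Schauder is the reverse of the paper's, and it runs into a circularity that you have not resolved. In your energy inequality
\[
\int_\Omega |\nabla v|^2 \;\leq\; C(1+T)\,\|v\|_{L^1} + C\,\|v\|_{L^2}^2, \qquad v=(u-T)_+,
\]
absorbing the last term via a Poincar\'e inequality relative to $\{v=0\}$ requires the Poincar\'e constant $C_P$ to satisfy $C_P\, C < 1$; equivalently, $|\{u\leq T\}|$ must exceed a threshold depending on both $C$ and $\Omega$, not merely be positive. You correctly identify this as the main obstacle, but the mechanism you propose---that the subsolution property $\Delta|\nabla u|\geq f'(u)|\nabla u|$ together with instability and ``a Picone-type identity applied to $|\nabla u|/\phi_1$'' yields the measure bound---does not land. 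A Picone identity compares Dirichlet-type energies of $|\nabla u|$ and $\phi_1$; it says nothing direct about the measure of a sublevel set of $u$. What you must rule out is a solution sitting almost entirely in the deep well, dipping into $[c_1,c_n]$ only on a tiny set; controlling the size of that set seems to require a priori gradient control, which is precisely what you are trying to derive. So the argument as written is circular at its key step.

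The paper avoids this by reversing the order and bounding the gradient \emph{first}, via a $P$-function. With $P=\tfrac12|\nabla u|^2-F(u)$, Bochner's formula and $\operatorname{Ric}\geq 0$ give a maximum principle for $P$ on $\{\nabla u\neq 0\}$, and strict convexity of $\partial\Omega$ forces $\partial_\nu P<0$ on the boundary; hence $\max_\Omega P=-F(u_0)$ is attained at an interior critical point $p_0$ of $u$. This yields the pointwise bound $|\nabla u|^2\leq 2\bigl(F(u)-F(u_0)\bigr)$. If $u_0>k^+$ one checks $u_0=u_{\max}$, and (A2) converts the pointwise bound into $|\nabla(u_0-u)|\leq A(\delta)\,|u_0-u|$ on the layer $\{k^+\leq u\leq u_0-\delta(u_0-k^+)\}$. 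Integrating this along a geodesic from the layer down to the nonempty set $\{u\leq k^+\}$ and applying Gr\"onwall gives an explicit upper bound on $u_0$ in terms of $\Omega,C,K$. The $L^\infty$ bound on $u$ then follows from the gradient bound and the diameter of $\Omega$. Note also that the paper's use of instability is much more elementary than yours: it is used only to guarantee that the range $[u_{\min},u_{\max}]$ meets $\{c_1,\dots,c_n\}$ (otherwise the maximum principle would force $u$ to be a constant local minimum of $F$, hence stable), so that the set $\{u\leq k^+\}$ is nonempty; no eigenfunction comparison is needed.
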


Thanks to these estimates we can modify $F$ outside of a compact region without changing the set of unstable solutions of \eqref{eq1}, as long as we retain the same quadratic bound for the decay and we do not add new local maxima while doing so. We expect this estimate to be sharp, in the sense that it should not hold for potentials with superquadratic decay as a consequence of the classical work \cite{BahriLions}, neither it holds for stable solutions, since $F$ might have local minima outside of the interval $[-M_0,M_0]$. Proposition \ref{apriori} also implies that the set of unstable solutions of \eqref{eq1}. So one might expect that, generically, these type of equations only admit finitely many solutions (again, contrasting with the classical work \cite{BahriLions}). 

\

The mountain pass characterization from Theorems \ref{thmgs1} and \ref{thmgs2} also allows us to derive several geometric properties of ground states. For example, since the Morse index of a ground state must be one, in order to rule out constant ground states, it is enough for one of the smallest local maxima of $F$ to have Morse index at least 2. This is the content of the following corollary.

\begin{corollary}\label{nonconstant}
Assume (D) and (A). Let $c\in \R$ be a local maxima of $F$. Assume that $c$ minimizes $F$ in the set of all local maxima of $F$. If $F''(c)=f'(c)<-\lambda_1(\Omega)$, then $c$ has Morse index greater than 1 as a critical point of $E$. In this case, ground states of \eqref{eq1} are non-constant. 
\end{corollary}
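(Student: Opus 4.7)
The plan is to reduce both assertions to a direct spectral computation for the linearization at the constant solution $u\equiv c$, and then combine this with the Morse index 1 characterization of ground states provided by Theorems \ref{thmgs1} and \ref{thmgs2}. Since $c$ is a critical point of $F$, we have $f(c)=0$, so $u\equiv c$ is indeed a solution of \eqref{eq1}. The Hessian of $E$ at this constant is
\begin{equation*}
E''(c)(\varphi,\varphi)=\int_\Omega |\nabla\varphi|^2+f'(c)\varphi^2,
\end{equation*}
which is the quadratic form associated with the linear operator $L=-\Delta+f'(c)$ on $\Omega$, equipped with Neumann boundary conditions in case (D1).

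Next, I would diagonalize $L$ using the spectral decomposition of the Neumann (or closed) Laplacian. Writing $0=\lambda_0<\lambda_1(\Omega)\le \lambda_2(\Omega)\le\cdots$ for its eigenvalues, the eigenvalues of $L$ are $\mu_k=\lambda_k(\Omega)+f'(c)$. The eigenfunction for $\mu_0=f'(c)$ is constant, and since $c$ is a non-degenerate local maximum of the Morse function $F$, we have $f'(c)=F''(c)<0$, so $\mu_0<0$. Under the hypothesis $f'(c)<-\lambda_1(\Omega)$, we also get $\mu_1=\lambda_1(\Omega)+f'(c)<0$. Thus $L$ has at least two negative eigenvalues (counted with multiplicity, using the multiplicity of $\lambda_1(\Omega)$ if needed), which is exactly the statement that the Morse index of $c$ as a critical point of $E$ is strictly greater than $1$.

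For the final assertion, I would argue by contradiction: suppose some constant $u\equiv a$ is a ground state. Since ground states are unstable, $a$ cannot be a local minimum of $F$, so $a$ is a local maximum. Then $E(a)=|\Omega|F(a)\ge |\Omega|F(c)=E(c)$ by the minimality hypothesis on $c$. On the other hand, $c$ is itself an unstable solution of \eqref{eq1} (its Morse index is $>1$), so the ground state energy does not exceed $E(c)$, forcing $E(a)=E(c)$ and $F(a)=F(c)$. In the (generic) case where $c$ is the unique minimizer of $F$ among its local maxima, this means $a=c$; but then Theorems \ref{thmgs1} and \ref{thmgs2} would give Morse index exactly $1$, contradicting the computation above. (If there are several local maxima tied with $c$, the same contradiction follows provided the hypothesis $f'(\cdot)<-\lambda_1(\Omega)$ holds at each of them.) The only real step of substance is the spectral computation above; no serious obstacle arises, since once the variational characterization of ground states from the previous theorems is available, the rest is the standard separation of variables for $-\Delta+f'(c)$ on $\Omega$.
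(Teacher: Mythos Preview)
Your spectral computation showing that the constant $c$ has Morse index at least $2$ is exactly right, and this is the same argument the paper has in mind. The paper does not spell out a proof of this corollary beyond the sentence preceding it (``since the Morse index of a ground state must be one\ldots''), so your write-up is in fact more detailed than the original.

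There is one small but genuine wrinkle in your second paragraph: you reduce to showing $a=c$, and then add a parenthetical extra hypothesis (``provided $f'(\cdot)<-\lambda_1(\Omega)$ holds at each of them'') to handle the case of several tied minimizing local maxima. This extra hypothesis is not in the statement and is not needed. The fix is simply to observe that once you have established $E(a)=E(c)$, you know the ground-state energy \emph{equals} $E(c)$. Since $c$ is itself an unstable solution with energy $E(c)$, it follows that $c$ is \emph{also} a ground state, regardless of whether $a=c$ or not. Now Theorems~\ref{thmgs1} and \ref{thmgs2} applied directly to $c$ force its Morse index to be $1$, contradicting your spectral computation. So there is no need to identify $a$ with $c$, and the non-uniqueness issue evaporates.
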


We also prove that ground states are symmetric when the ambient space is symmetric. This is done by symmetrizing the optimal family obtained in the mountain pass characterization.
\begin{theorem} \label{thm_sphere}
Assume hypothesis (A), then:
\begin{enumerate}
\item if $\Omega=B^N_R$ is an Euclidean ball in $\R^{N}$, ground states of \eqref{eq1} are Foliated Schwarz symmetric. 
\item if $\Omega=S_R^{N}$ is an Euclidean sphere, ground states of \eqref{eq1} are Schwarz symmetric.
\end{enumerate}
\end{theorem}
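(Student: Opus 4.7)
The plan is to combine the mountain pass characterization of Theorem~\ref{thmgs2} with two-point symmetrization. Replacing $F$ by the Morse function $F^*$ provided there, the ground state $u$ realizes
\[
E^*(u)=\inf_{h\in\Gamma}\max_{t\in[-1,1]} E^*(h(t))
\]
and admits an optimal path $h\in\Gamma$ with $h(0)=u$, $h(\pm 1)=s^\pm$, and $E^*(h(t))<E^*(u)$ for $0<|t|\le 1$. The strategy is to polarize $h(t)$ pointwise in $t$ with respect to every hyperplane in a suitable family, and then use the resulting energy comparison together with the strict maximum at $t=0$ to force $u$ itself to be polarization invariant.

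For (1), fix a unit vector $e\in\R^N$ and consider the family $\mathcal H_e$ of open half-spaces $H\subset\R^N$ whose bounding hyperplane contains the origin and the axis $\R e$. For $H\in\mathcal H_e$, the reflection $\sigma_H$ preserves $B_R^N$, and the two-point rearrangement $v\mapsto v^H$ satisfies the Brock--Solynin inequality
\[
\int_{B_R^N} |\nabla v^H|^2 \le \int_{B_R^N} |\nabla v|^2
\]
with equality iff $v=v^H$ or $v\circ\sigma_H=v^H$, together with $\int F^*(v^H)=\int F^*(v)$ (the latter because polarization is a measure-preserving rearrangement on each orbit $\{x,\sigma_H x\}$). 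Since constants are polarization invariant, $h^H(t):=h(t)^H$ is admissible in $\Gamma$ with $\max_t E^*(h^H(t))\le E^*(u)$; the min-max identity forces equality, and the strict maximum of $E^*\circ h$ at $t=0$ yields $E^*(u^H)=E^*(u)$. The equality case in the polarization inequality then gives $u=u^H$ or $u\circ\sigma_H=u^H$ for every $H\in\mathcal H_e$, which by the standard characterization (see Brock, Smets--Willem, Bartsch--Weth--Willem) forces $u$ to be FS symmetric with respect to the axis $\R e$.

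For (2), the argument runs in parallel with $B_R^N$ replaced by $S^N_R$, half-spaces by open hemispheres bounded by totally geodesic equatorial $(N-1)$-spheres, and polarization by its spherical analogue (Baernstein). The analogous identities and inequalities hold, constants remain invariant, and the characterization of Schwarz symmetric functions on $S^N_R$ as those fixed by every hemispherical polarization through a prescribed pair of antipodal points concludes the argument.

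The hard part will be passing from the pointwise equality $u=u^H$, possibly up to composition with $\sigma_H$, for each individual half-space to the global conclusion that a single axis (resp.\ pair of antipodal poles) witnesses the symmetry of $u$. This requires a consistent choice of the polarization alternative as $H$ varies continuously, which in the sphere case is afforded by the connectedness of the relevant family of hemispheres, but in the ball case calls for an additional argument to exclude the ``flipped'' alternative $u\circ\sigma_H=u^H$ along the parameter family $\mathcal H_e\cong S^{N-2}$, typically via a unique continuation step for the linearized equation at $u$.
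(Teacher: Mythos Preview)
Your argument contains a genuine gap at the key step. Polarization \emph{preserves} the Dirichlet integral: for any $v\in W^{1,2}$ and any admissible half-space $H$ one has $\int|\nabla v_H|^2=\int|\nabla v|^2$ exactly (this is Proposition~\ref{prop:polar}(2) in the paper, and is the content of the Brock--Solynin result you cite). There is no nontrivial inequality and hence no ``equality case'' to analyze. Consequently $E^*(u_H)=E^*(u)$ holds automatically for every $u$ and every $H$, and from this identity alone you cannot deduce $u=u_H$ or $u\circ\sigma_H=u_H$. Your chain of implications breaks precisely here.

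What the paper does instead is to use the optimal path and the min-max level not to force an energy equality (which is free), but to force $u_H$ to be a \emph{critical point}: since $h_H\in\Gamma$ and $\sup E^*(h_H)=E^*(u_H)=E_1$, if $u_H$ were not a solution one could push the path down near $t=0$ in a direction where $E^{*\prime}(u_H)\neq 0$, contradicting the min-max characterization. Once $u_H$ is known to be a solution, the conclusion is reached by PDE arguments rather than a variational equality case: for the ball, the strong maximum principle comparison of the two solutions $u$ and $u\circ\sigma_H$ as in Bartsch--Weth--Willem yields $u=u_H$ for every $H\in\mathcal H_*$; for the sphere, the paper iterates polarizations (Van Schaftingen's approximation, Theorem~\ref{approx-polar}) to produce a sequence of solutions converging to $u^*$, and then invokes the Brothers--Ziemer rigidity for the P\'olya--Szeg\H{o} inequality (Proposition~\ref{sdrproperties}(4)), together with $|\{\nabla u=0\}|=0$ from unique continuation, to conclude $u=u^*$ up to isometry. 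The missing idea in your proposal is this passage from ``$u_H$ has the same energy'' to ``$u_H$ is a solution'', followed by a solution-level comparison; without it the argument does not go through.
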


We also study uniqueness of the ground state. Notice that in general, equation \eqref{eq1} might admit multiple ground states. However, it is still possible to show that ground states are unique in some important cases. 
We do this in particular for a symmetric double well potential on the sphere as discussed below.
Some of our results in fact apply to more general non-linearities as made precise in the corresponding sections.
\\ 

Consider the \emph{Allen-Cahn} equation
	\begin{equation}\label{AC} 
		\e^2\Delta u -W'(u)=0 \quad \mbox{in} \quad S^{N},
	\end{equation}
where $\e>0$ and $W(u)=(1-u^2)^2/4$. Solutions to this equation are the critical points of   $$E_\e(u)=\int_\Omega \e \frac{|\nabla u|^2}{2}+\frac{W(u)}{\e}.$$

\

We have following uniqueness result:

\begin{theorem} \label{mainac}
The ground state $u$ for equation \eqref{AC} on $S^{N}$ is unique up to rigid motions. The function $u$ is odd and Schwarz symmetric. In addition, the ground state is non-constant if and only if $\e \in (0,\e_1)$, where $\e_1=\sqrt{-W''(0)/\lambda_1}$ and $\lambda_1=\lambda_1(S^{N})$.
\end{theorem}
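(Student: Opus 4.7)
Since $W(u) = (1-u^2)^2/4$ satisfies hypothesis (A1), Theorem \ref{thmgs1} applies: every ground state has Morse index one and realizes the mountain pass value of $E_\e$ between the stable constants $\pm 1$. By Theorem \ref{thm_sphere}(2), any ground state is Schwarz symmetric, so it is a radial profile $u(\theta)$ with respect to some pole $p\in S^N$ satisfying
\begin{equation*}
\e^2\bigl(u''(\theta) + (N-1)\cot\theta\cdot u'(\theta)\bigr) = u^3 - u, \quad u'(0) = u'(\pi) = 0,
\end{equation*}
with $u' \leq 0$ on $(0,\pi)$. A key preliminary observation, obtained by multiplying the equation by $u$ and integrating, is the energy identity
\begin{equation*}
E_\e(u) = \frac{1}{4\e}\left(\operatorname{Vol}(S^N) - \int_{S^N} u^4\right),
\end{equation*}
valid for every solution; in particular $E_\e(u) \leq E_\e(0)$, with equality if and only if $u \equiv 0$.

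For the non-constancy dichotomy, set $F := W/\e^2$. The unique local maximum of $W$ is at $0$ with $F''(0) = -1/\e^2$, so Corollary \ref{nonconstant} applies precisely when $\e < \e_1$, giving non-constancy in this range. For $\e \geq \e_1$, I claim the mountain pass value equals $E_\e(0)$, so by the energy identity any ground state must equal $u\equiv 0$. The upper bound follows from the linear path $h(t) \equiv t$, $t \in [-1,1]$, which has maximal energy $E_\e(0)$ at $t = 0$. For the matching lower bound, any continuous path in $W^{1,2}(S^N)$ from $-1$ to $+1$ must cross a function $v$ with $\int_{S^N} v = 0$, by the intermediate value theorem applied to the mean; the expansion
\begin{equation*}
E_\e(v) = \frac{\operatorname{Vol}(S^N)}{4\e} + \frac{1}{2}\int_{S^N}\bigl(\e|\nabla v|^2 - v^2/\e\bigr) + \frac{1}{4\e}\int_{S^N} v^4,
\end{equation*}
together with the Poincar\'e inequality $\int |\nabla v|^2 \geq \lambda_1 \int v^2$ and the threshold $\e^2\lambda_1 \geq 1$, yields $E_\e(v) \geq E_\e(0)$.

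For oddness, given a radial ground state $u(\theta)$, the reflected profile $v(\theta) := -u(\pi-\theta)$ satisfies the same ODE and boundary conditions and has the same energy (using that $W$ is even and $\cot(\pi-\theta) = -\cot\theta$), so it is also a monotone Schwarz-symmetric ground state. Uniqueness up to rigid motions, combined with the $\SO(N)$-invariance of $u$ about its pole, then forces $u(\pi-\theta) = -u(\theta)$, and in particular $u(\pi/2) = 0$. For the uniqueness of the radial profile itself, I reduce to a shooting problem on $[0,\pi/2]$ with $u'(0) = 0$, $u(\pi/2) = 0$, parametrized by $a = u(0) \in (0,1)$, and plan to exploit the following Morse-theoretic rigidity: any non-constant radial Morse-index-one solution admits an $N$-dimensional space of zero modes of $L_u := -\e\Delta + W''(u)/\e$ generated by $\SO(N+1)$-rotations moving the pole, so $L_u$ restricted to the radial subspace has exactly one negative and no zero eigenvalues. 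A Sturm-type oscillation argument on the one-dimensional ODE then constrains the admissible $a$ to be unique.

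The main obstacle is the uniqueness of the radial profile. The friction term $(N-1)\cot\theta$ destroys any conserved first integral of the ODE, preventing the phase-portrait argument that works in the flat case, so one must instead combine the strict Morse-index-one rigidity with the reflection symmetry about $\theta = \pi/2$ furnished by oddness. A fallback is to pair the mountain-pass optimality of Theorem \ref{thmgs1}(2) with a continuation argument starting at the bifurcation $\e = \e_1$, in order to show the branch of ground states is connected and simple throughout $(0,\e_1)$.
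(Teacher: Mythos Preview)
Your treatment of Schwarz symmetry and of the threshold $\e=\e_1$ is fine, and your Poincar\'e/mean-zero argument for $\e\geq\e_1$ is a clean alternative to the paper's route through Faber--Krahn and Theorem~\ref{Dirichlet}. The real problem is in the oddness/uniqueness part, where your argument is circular and incomplete.

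You deduce oddness from ``uniqueness up to rigid motions,'' and then set up the uniqueness problem as a shooting problem on $[0,\pi/2]$ with boundary condition $u(\pi/2)=0$ --- but that boundary condition \emph{is} oddness. Without it, the correct shooting problem lives on $[0,\pi]$ with $u'(0)=u'(\pi)=0$, and your sketched ``Sturm-type'' argument does not close: knowing that the radial part of $L_u$ has one negative eigenvalue does not by itself force uniqueness of the radial profile, and the claim that the radial kernel is trivial requires knowing the full nullity is exactly $N$, which you have not proved (and which in the paper is Proposition~\ref{prop_nullity}, itself established only \emph{after} uniqueness and oddness are known).

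The paper avoids this loop entirely by using two ingredients you are missing. First, Lemma~\ref{nodal-equator} proves oddness \emph{directly}: if the nodal set of a rotationally symmetric solution were not the equator, one nodal domain would fit strictly inside a hemisphere; rotating a copy of it until it touches the original domain at a single boundary point and comparing the two positive Dirichlet solutions via the maximum principle and Hopf lemma yields a contradiction. Second, once the nodal set is the equator, the Brezis--Oswald uniqueness theorem (Theorem~\ref{Dirichlet}) says the positive Dirichlet solution on the hemisphere is unique, which immediately gives uniqueness of the ground state up to rotations. Both steps exploit the specific structure $t\mapsto -f(t)/t$ non-increasing, which makes $\theta u$ a subsolution for $\theta\in(0,1)$ and drives both the touching argument and Brezis--Oswald. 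Your proposed fallbacks (radial Sturm oscillation, or continuation from $\e=\e_1$) would require substantial additional work, whereas the Brezis--Oswald route is one line once oddness is in hand.
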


In other words, when $\e\in(\e_1,\infty)$, the ground state of the Allen-Cahn equation on $S^N$ is given by the constant solution $0$, and it is non-constant with nodal set exactly along an equator, when $\e\in(0, \e_1)$.

\

We also study bifurcation from the first energy level and the gap between the first and second energy levels of $E_\e$. More precisely, denote the first and second critical energy levels of $E$ as $a_\e\leq b_\e$, respectively, i.e.
\begin{equation} 
		a_\e = \inf\{ E_\e(u) : u \in \WM, E_\e'(u) = 0, E_\e(u)>E_\e(\pm1)\}
	\end{equation}
and
	\[b_\e = \inf\{ E_\e(u) : u \in \WM, E_\e'(u)=0, E_\e(u)>a_\e\}.\]
	
With this notation, we have the following gap theorem: 

\begin{theorem} \label{gap}
 $b_\e>a_\e$ if and only if $\e\in(0,\e_1)$. Moreover, the energy level $b_\e$ is realized by at least one solution which is non-constant provided $\e \in (0,\e_2)$, where $\e_2=\sqrt{-W''(0)/\lambda_2}$, where $\lambda_2=\lambda_2(S^{N})$.
\end{theorem}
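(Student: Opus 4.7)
The plan is to (i) establish the gap $b_\e > a_\e$ for $\e \in (0, \e_1)$ via an orbit-slice argument based on the uniqueness of the ground state from Theorem~\ref{mainac}, and (ii) realize $b_\e$ by a non-constant critical point when $\e < \e_2$ via a min-max construction using low Laplacian eigenspaces.

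For the forward direction of (i), fix $\e \in (0, \e_1)$ and let $u_\e$ denote the canonical odd Schwarz symmetric ground state from Theorem~\ref{mainac}. I argue by contradiction: supposing there are critical points $v_n$ with $a_\e < E_\e(v_n) \to a_\e$, Proposition~\ref{apriori} (applicable since the Allen--Cahn potential satisfies (A1)) and elliptic regularity deliver a $C^2$-convergent subsequence whose limit is a critical point of energy $a_\e$, hence a ground state. Uniqueness up to rigid motions permits assuming $v_n \to u_\e$. The key step is the orbit-slice claim: in a $C^2$-neighborhood of $u_\e$, every critical point of $E_\e$ is a rotation of $u_\e$, so $E_\e(v_n) = a_\e$ for large $n$, contradicting $E_\e(v_n) > a_\e$. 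This reduces to verifying that the kernel of the linearization $L_\e = -\e\Delta + W''(u_\e)/\e$ has dimension exactly $N$ and is spanned by the infinitesimal rotations $\{X \cdot u_\e : X \in \mathfrak{so}(N+1)\}$; granted this, an equivariant implicit function theorem identifies the local critical set with the orbit $SO(N+1) \cdot u_\e$. For the reverse direction ($\e \geq \e_1$), a Poincar\'e inequality shows $E_\e''(u)$ is non-negative on the mean-zero subspace at every critical point $u$, forcing Morse index at most $1$; combined with Theorem~\ref{mainac} (which identifies the ground state as $0$) and a bifurcation/rigidity argument, the only critical points turn out to be $\pm 1$ and $0$, so the set defining $b_\e$ is empty and $b_\e > a_\e$ does not hold.

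For (ii), the gap provides a $C^2$-convergent minimizing sequence of critical points for $b_\e$ staying uniformly above $a_\e$, whose limit $v$ realizes $b_\e$. If $v$ were constant then necessarily $v = 0$ and $b_\e = |S^N|/(4\e)$. To contradict this when $\e < \e_2$, I construct a non-constant critical point $w$ with $a_\e < E_\e(w) < |S^N|/(4\e)$. Since $E_\e''(0)$ is strictly negative on the span of constants together with the $\lambda_1$- and $\lambda_2$-eigenspaces of the Laplacian (a subspace of dimension at least $N+2$), an $(N+2)$-parameter sweepout based at $0$ and deforming in these unstable directions attains maximal energy strictly below $E_\e(0)$ by a second-variation calculation. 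The associated min-max value $c$ satisfies $c > a_\e$ on topological grounds: as explained in the introduction, the first $N+1$ Allen--Cahn widths equal $a_\e$ and the $(N+2)$-th exceeds it because the sweepout is not null-homotopic in $\{E_\e \leq a_\e\}$. Standard Allen--Cahn min-max theory \cite{GasparGuaraco} then produces a critical point $w$ realizing $c$, yielding $b_\e \leq E_\e(w) = c < E_\e(0) = b_\e$, the desired contradiction.

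The main obstacle is the kernel computation in (i): proving that the nullity of $E_\e''(u_\e)$ equals $N$ and is entirely accounted for by infinitesimal rotations. This requires a careful spectral analysis of $L_\e$ under the $SO(N)$ stabilizer of $u_\e$, with a Sturm--Liouville argument controlling the axisymmetric sector and spectral comparison handling the higher isotypic sectors.
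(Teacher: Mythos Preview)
Your reduction of the gap $b_\e > a_\e$ (for $\e < \e_1$) to the nullity computation matches the paper exactly: both argue by contradiction with solutions $v_j$ of energy descending to $a_\e$, pass to a ground state limit, and exploit that $\ker L_{u_\e}$ is precisely the $N$-dimensional tangent space to the orbit $A_\e$ (this is Proposition~\ref{prop_nullity}). Where you invoke an equivariant implicit function theorem, the paper projects each $v_j$ to its $L^2$-nearest point on $A_\e$ and shows the normalized difference $(v_j - u_0)/\|v_j - u_0\|_{L^2}$ converges to a unit vector in $\ker L_{u_0}$ that the footpoint condition forces to be orthogonal to $\ker L_{u_0}$---an equivalent endgame.

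Your route to the nullity lemma itself differs from the paper's. You propose Sturm--Liouville analysis in the radial sector plus spectral comparison on higher $SO(N)$-isotypic components. The paper instead uses the reflections $r_v$ for $v \in S^{N-1} \cup \{e_{N+1}\}$, each commuting with $L_{u_\e}$ and hence acting as an involution on the kernel. The $(-1)$-eigenspace of $r_{N+1}$ is killed by an integration-by-parts identity using $tf'(t)-f(t)>0$; for $v \in S^{N-1}$, any odd kernel element is (by Courant, since index $1$ makes kernel elements \emph{second} eigenfunctions) a first Dirichlet eigenfunction on a hemisphere, hence a multiple of some $\phi_X$; the residual reflection-invariant piece is excluded by a further Courant nodal count. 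This is more elementary than an ODE/representation-theoretic attack and puts the index-$1$ hypothesis to direct use.

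Your argument for (ii) has a genuine gap. To get a nonconstant critical point with energy in $(a_\e, E_\e(0))$ you run an $(N+2)$-parameter min-max near $0$ and claim its value exceeds $a_\e$ by appeal to Theorem~\ref{thm_width_gap}; but that theorem is stated only for small $\e$, and your $(N+2)$-ball of perturbations is not evidently an admissible family for the $(N+2)$-th $\mathbb{Z}/2$-equivariant width, so the link from the width gap to your $c$ is missing. A more direct construction works: for $\e < \e_2$ pick a second spherical harmonic $\phi$ (e.g.\ the restriction of $x_1 x_2$) with enough reflective symmetry that the unique positive Dirichlet solution on $\{\phi>0\}$ (which exists by Theorem~\ref{Dirichlet} since $\lambda_1(\{\phi>0\}) = \lambda_2(S^N)$) extends by odd reflection to a global solution $w$; then $w$ is not Schwarz symmetric so $E_\e(w) > a_\e$ by Theorem~\ref{mainac}, while $w \not\equiv 0$ gives $E_\e(w) < E_\e(0)$, contradicting $b_\e = E_\e(0)$. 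The paper carries this out explicitly on $S^3$ (Proposition~\ref{clifford} and the orthogonal-equator solution) but does not write out part (ii) in its proof of Theorem~\ref{gap}.
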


\

Denoting  by $c_\e(1) \leq c_\e(2) \leq c_\e(3) \leq \dots $
the Allen--Cahn widths of the sphere as defined in \cite[Section 3.2]{ACClosed}, we also prove the following gap theorem:

\begin{theorem} \label{thm_width_gap}
For $\e>0$ sufficiently small we have on the sphere $S^N$ that
$$
c_\e(1) = \dots = c_\e(N+1) < c_\e (N+2).
$$
\end{theorem}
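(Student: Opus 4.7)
The plan is to construct an explicit $(N{+}1)$-sweepout realizing the ground state level $a_\e$, assembled from the optimal mountain pass paths through the ground states into a family parameterized by $\mathbb{RP}^{N+1}$, and then to establish the strict inequality $c_\e(N{+}2) > a_\e$ via a Lusternik--Schnirelmann index argument, using Theorems \ref{mainac} and \ref{gap}. Concretely, I would fix $\e$ small enough that $\e < \e_1$ and $E_\e(0) = W(0)|S^N|/\e > a_\e$. By Theorem \ref{mainac}, for each $v \in S^N$ there is a unique ground state $u_v$ with $u_v(v) > 0$ and $u_{-v} = -u_v$, and $v \mapsto u_v$ is continuous. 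Theorem \ref{thmgs1} furnishes, for each $v$, an optimal mountain pass path $h_v : [-1, 1] \to W^{1,2}(S^N)$ with $h_v(\pm 1) = \pm 1$ and $h_v(0) = u_v$; using the $O(N{+}1)$-equivariance one arranges these paths to depend continuously on $v$ and satisfy $h_{-v}(-t) = -h_v(t)$. The resulting $\Psi: S^N \times [-1, 1] \to W^{1,2}(S^N) \setminus \{0\}$ is antipodal under $(v, t) \mapsto (-v, -t)$ and constant on each endpoint slice. Collapsing $S^N \times \{\pm 1\}$ to points turns the parameter space into the unreduced suspension $\Sigma S^N \cong S^{N+1}$, and the antipodal identification $(v, t) \sim (-v, -t)$ corresponds to the antipodal map on $S^{N+1}$; the quotient is thus $\mathbb{RP}^{N+1}$. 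Passing to the quotient $\bar\Psi: \mathbb{RP}^{N+1} \to (W^{1,2}(S^N) \setminus \{0\})/\mathbb{Z}_2$ and composing with the classifying map to $\mathbb{RP}^\infty$, the pullback of $\bar\lambda \in H^1(\mathbb{RP}^\infty; \mathbb{Z}_2)$ is the generator of $H^1(\mathbb{RP}^{N+1}; \mathbb{Z}_2)$, so $\bar\lambda^{N+1}$ pulls back to the top class; this shows $\Psi$ is an $(N{+}1)$-sweepout in the sense of \cite{ACClosed}. Since each $h_v$ is optimal at $u_v$, the maximum of $E_\e \circ \Psi$ equals $a_\e$, giving $c_\e(N{+}1) \leq a_\e$.

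For the lower bound, observe that for $\e$ sufficiently small $E_\e(0) = W(0)|S^N|/\e \to \infty$ while $a_\e$ remains bounded (via the sweepout above), so $E_\e(0) > a_\e$. Combined with Theorem \ref{stableconst}, which forces stable critical points of $E_\e$ to be the constants $\pm 1$ at energy zero, and with Theorem \ref{gap}, this leaves $a_\e$ as the unique critical value of $E_\e$ in $(0, a_\e]$. The Palais--Smale condition for $E_\e$ on the closed manifold $S^N$ together with the standard min-max deformation argument then shows that each $c_\e(p)$ is a positive critical value of $E_\e$, hence $c_\e(p) \geq a_\e$ for every $p \geq 1$. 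Combined with the sweepout bound, this forces $c_\e(1) = \cdots = c_\e(N{+}1) = a_\e$.

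For the strict gap, suppose $c_\e(N{+}2) = a_\e$. The equivariant Lusternik--Schnirelmann principle, applied to the $\mathbb{Z}_2$-action $u \mapsto -u$ on $W^{1,2}(S^N) \setminus \{0\}$, yields $\gamma(K_{a_\e}) \geq N+2$ for the Fadell--Rabinowitz $\mathbb{Z}_2$-cohomological index of the critical set at level $a_\e$. But by Theorem \ref{mainac}, $K_{a_\e} = \{u_v : v \in S^N\}$ is $\mathbb{Z}_2$-homeomorphic to $S^N$ with the antipodal action, so $\gamma(K_{a_\e}) = N+1$. The contradiction gives $c_\e(N{+}2) > a_\e$.

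The main technical hurdle is the continuous, equivariant assembly of the mountain pass paths $\{h_v\}$: one needs a single family simultaneously continuous in $v$, optimal at $u_v$, and antipodally equivariant $h_{-v}(-t) = -h_v(t)$. This should follow from the rigid structure of ground states---each has a canonical axis $v \in S^N$, so $O(N{+}1)$ can be used to transport one fixed choice $h_{e_1}$ to all other $h_v$---but the antipodal matching at $t = 0$ requires a careful global construction to ensure continuity across antipodal pairs of parameters. The remaining inputs, namely the Palais--Smale condition for $E_\e$ and the equivariant deformation lemma underlying the Lusternik--Schnirelmann step, are standard for the subcritical Allen--Cahn energy on a compact manifold.
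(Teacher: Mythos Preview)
Your argument for the strict inequality is essentially identical to the paper's: both derive a contradiction from the equivariant Lusternik--Schnirelmann principle, bounding the $\mathbb{Z}/2$-cohomological index of the critical set at the ground-state level below by $N+2$ (from the coincidence of widths) and above by $N+1$ (via the identification with $S^N$ coming from Theorem~\ref{mainac}). The paper phrases the upper bound as the existence of a $\mathbb{Z}/2$-equivariant map $K_{c_\e(1)}\to S^N$; you use the stronger statement that $K_{a_\e}$ is equivariantly homeomorphic to $S^N$, but the content is the same.

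The difference lies in the equality $c_\e(1)=\dots=c_\e(N+1)=a_\e$. The paper's proof is silent on this point: as written it only shows that the first $N+2$ widths cannot all coincide, implicitly taking their common value to be $a_\e$ (which is needed in order to invoke Theorem~\ref{mainac} for the equivariant map to $S^N$). You supply this part explicitly, with the upper bound $c_\e(N+1)\le a_\e$ coming from your suspension sweepout $\mathbb{RP}^{N+1}\to (W^{1,2}\setminus\{0\})/\mathbb{Z}_2$ built from optimal paths, and the lower bound $c_\e(p)\ge a_\e$ from the fact that the widths are positive critical values. Your flagged technical hurdle---the continuous, equivariant assembly of the paths $h_v$---is genuine and would need to be carried out, but the strategy is sound.

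One minor point: your appeal to Theorem~\ref{gap} is superfluous, and the paper explicitly remarks that the width-gap argument does not rely on it. That $a_\e$ is the unique critical value in $(0,a_\e]$ is immediate from the definition of $a_\e$ as the least positive critical level; Theorem~\ref{gap} concerns the next level $b_\e$ and plays no role here.
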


\

In virtue with the analogy with minimal surfaces \cite{GasparGuaraco,Dey}, we expect that for $N=3$, the critical level $c_\e(5)$ of Theorem \ref{thm_width_gap} is attained by solutions having their nodal set exactly on the Clifford torus of $S^3$. We construct this solution on \cref{clifford} below. Moreover, we prove:

\begin{theorem} \label{thm:bifurcation}
Let $\e_2 = (\lambda_2(S^3))^{-1/2}=\frac{1}{2\sqrt{2}}$. 

\begin{enumerate}
	\item[(1)] For any $\e \in (\e_2,\e_1)$, the only nonconstant solutions of the Allen-Cahn equation are the ground states (which are unique up to rotations). 
	\item[(2)] For $\e<\e_2$, there are at least two families of solutions which are not radially symmetric. These families of solutions have the symmetries of the Clifford torus and a pair of orthogonal equators, respectively, and accumulate on these minimal surfaces as $\e\downarrow 0$.
\end{enumerate}
\end{theorem}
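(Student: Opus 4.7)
The plan is to treat the two parts with different tools. Part (1) is a uniqueness/non-existence statement that I would prove by combining a Morse-index bound on $S^3$ with the uniqueness of ground states from Theorem \ref{mainac}. Part (2) is an existence statement that I would prove via equivariant Crandall--Rabinowitz bifurcation at $\e=\e_2$ in two suitably chosen symmetric subspaces, followed by a global continuation argument down to $\e\downarrow 0$.

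For part (1), fix $\e\in(\e_2,\e_1)$ and let $u$ be a non-constant (hence unstable) solution. Since $W''(u)=3u^2-1\ge -1$, for every $v$ orthogonal in $L^2(S^3)$ to the $5$-dimensional span $V=\mathrm{span}\{1,x_1,x_2,x_3,x_4\}$ of the first two eigenspaces of $\Delta$ one has
\[
E_\e''(u)(v,v) \;=\; \int_{S^3}\!\Bigl(\e|\nabla v|^2 + \tfrac{W''(u)}{\e}v^2\Bigr) \;\ge\; \tfrac{\lambda_2\e^2-1}{\e}\,\|v\|_{L^2}^2 \;>\;0,
\]
since $\e^2>\e_2^2=1/\lambda_2$. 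Hence $\mathrm{ind}(u)+\mathrm{null}(u)\le 5$. The infinitesimal $SO(4)$-action on $u$ contributes $6-\dim\mathrm{Stab}(u)$ to $\mathrm{null}(u)$, while $\mathrm{ind}(u)\ge 1$; combined, $\dim\mathrm{Stab}(u)\ge 2$. Since every closed connected Lie subgroup of $SO(4)$ of dimension $\ge 4$ (up to conjugacy $U(2)$) already acts transitively on $S^3$, and dimension $5$ does not occur, the only possibilities are $\dim\mathrm{Stab}(u)\in\{2,3\}$. If $\dim=3$, then $u$ is Schwarz symmetric, the problem reduces to a radial ODE on $[0,\pi]$, and a Sturm oscillation argument identifies $u$ with a ground state, so Theorem \ref{mainac} applies. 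If $\dim=2$, then $u$ is invariant under a maximal torus $T^2$, i.e.\ $u=u(t)$ with $\sin^2t=x_1^2+x_2^2$; a global bifurcation analysis inside the $T^2$-invariant subspace, together with the fact that the only bifurcation from $u\equiv 0$ in this subspace occurs at $\e=\e_2$ (since $T^2$-invariant spherical harmonics first enter the spectrum at $\lambda=\lambda_2$), shows that no non-trivial $T^2$-invariant solution exists for $\e>\e_2$.

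For part (2), I apply Crandall--Rabinowitz to the equation restricted to a symmetric subspace where the kernel at $(\e_2,0)$ is one-dimensional. The kernel of $-\e_2^2\Delta-1$ at $u\equiv 0$ is the $9$-dimensional second eigenspace of $\Delta$ on $S^3$. For the Clifford torus branch, restrict the equation to the closed subspace $X_C\subset W^{1,2}(S^3)$ of functions invariant under $SO(2)\times SO(2)\subset SO(4)$ and anti-invariant under the swap $(x_1,x_2)\leftrightarrow(x_3,x_4)$; inside $X_C$ the relevant kernel is one-dimensional, spanned by $v_C=x_1^2+x_2^2-x_3^2-x_4^2$. For the orthogonal-equators branch, restrict instead to $X_E$, the subspace of functions odd in $x_1$, odd in $x_2$, $SO(2)$-invariant in $(x_3,x_4)$, and invariant under $x_1\leftrightarrow x_2$; inside $X_E$ the kernel is one-dimensional, spanned by $v_E=x_1x_2$. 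Crandall--Rabinowitz then produces smooth local branches in each case. Global continuation to all $\e\in(0,\e_2)$ proceeds via Rabinowitz's alternative, using the $L^\infty$ a priori bound of Proposition \ref{apriori} (adapted to $W$) to preclude blow-up and an inspection of the higher spectrum of $\Delta$ on $X_C,X_E$ to rule out the branch re-meeting the trivial one above $\e=0$. Finally, as $\e\downarrow 0$, standard varifold convergence for Allen--Cahn solutions of bounded Morse index (as in \cite{GasparGuaraco}) combined with the preserved symmetry identifies the limit as the Clifford torus, respectively the pair of orthogonal equators, these being the only stationary integral varifolds compatible with each symmetry class.

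The hardest step is the $T^2$-invariant exclusion in part (1): the Morse-index bookkeeping alone leaves the configuration $\mathrm{ind}(u)=1$, $\mathrm{null}(u)=4$ formally admissible for a $T^2$-invariant $u$, so one really needs the global bifurcation analysis in the $T^2$-invariant subspace, together with compactness from Proposition \ref{apriori}, to rule out a stray branch living at some $\e>\e_2$. A secondary difficulty is the global continuation in part (2), which requires careful bookkeeping on the spectrum of $\Delta$ in $X_C$ and $X_E$ to exclude loop-back of the branches to the trivial solution.
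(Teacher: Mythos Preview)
Your Morse-index bound and the reduction via the Lie subgroup classification of $SO(4)$ match the paper exactly (Lemmas \ref{LargeIsotropy} and \ref{IsotropyandSymmetry}). The divergence is in how the remaining two cases, $\dim\mathrm{Stab}(u)=3$ and $\dim\mathrm{Stab}(u)=2$, are dispatched, and in how Part (2) is proved.

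\textbf{The $T^2$-invariant exclusion has a real gap.} You propose a global bifurcation analysis in the $T^2$-invariant subspace, relying on the fact that the only bifurcation from $u\equiv 0$ there occurs at $\e=\e_2$. But Rabinowitz's alternative does not assert that every nontrivial solution lies on a branch emanating from the trivial one; it only describes the branch that does bifurcate. Ruling out an isolated component of solutions at some $\e>\e_2$ would require a separate degree or continuation argument that you have not supplied. You flag this as the hardest step, and it is genuinely incomplete as written. The paper bypasses this entirely with a one-line eigenvalue argument: a $T^2$-invariant $u$ has a nodal domain $\Omega$ contained in one of the two solid tori bounded by the Clifford torus, so $\lambda_1(\Omega)\ge\lambda_1(\Omega_{\mathcal T})=\lambda_2(S^3)$ by domain monotonicity, and then the ``only if'' direction of Theorem \ref{Dirichlet} forces $\e<\e_2$.

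\textbf{The radial case and Part (2) are also handled more directly in the paper.} For $\dim\mathrm{Stab}(u)=3$, your ``Sturm oscillation argument'' is a placeholder; the paper (Lemma \ref{radsy}) uses the explicit formula $\lambda_1(B_\tau)=(\pi/\tau)^2-1$ and a Wirtinger estimate for spherical segments to force the nodal set to be a single equator, then invokes Lemma \ref{nodal-equator} and the uniqueness in Theorem \ref{Dirichlet}. For Part (2), the paper does not use Crandall--Rabinowitz at all: since $\lambda_1(\Omega_{\mathcal T})=\lambda_1(\Omega_{\mathcal X})=\lambda_2(S^3)$, the ``if'' direction of Theorem \ref{Dirichlet} gives a unique positive Dirichlet solution in each fundamental domain for every $\e<\e_2$, and the transitive action of the symmetry group on the boundary lets one extend by odd reflection to a smooth global solution. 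The nodal set is then \emph{exactly} the Clifford torus (respectively the pair of equators) for all $\e<\e_2$, so the accumulation claim is immediate and no varifold limit is needed. The unifying observation you are missing is that Theorem \ref{Dirichlet} does double duty: its existence direction builds the solutions in Part (2), and its non-existence direction kills both the $T^2$-invariant case and the extra radial nodal domains in Part (1).
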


\noindent\textbf{Organization.}  \cref{thmexistence} is proved in \cref{sec:existence}, \cref{thmgs1} and \cref{thmgs2} in \cref{sec:mpc}, \cref{apriori} in \cref{sec:apriori}, \cref{thm_sphere} in \cref{sec:sym}, \cref{mainac} in \cref{sec:sphere}, \cref{gap} and \cref{thm_width_gap} in \cref{sec:gap}, and \cref{thm:bifurcation} in \cref{sec:bif}.


\subsection*{Acknowledgements} We thank J. E. Camargo-Molina for pointing out the connections between semillinear elliptic equations and the study of vacuum stability. We also thank Manass\'es Xavier, Nestor Guillen, Manuel Del Pino and Andr\'e Neves for their interest in this work.

\subsection*{Notation.}\label{notation}

\

\medskip

\begin{tabular}{lll}
$B_R(p)$ && denotes the geodesic ball of radius $R$ centered at $p\in M$.\\
$|A|$ && denotes the $N$-dimensional Hausdorff measure of a subset\\
&&$A \subset M$, $|A|=\mathcal{H}^N(A)$.\\
$\Chi_A$ && denotes the \emph{characteristic function} of a set $A \subset M$, i.e. \\
&& $\Chi_A(x)=1$ if $x \in A$, and $\Chi_A(x) = 0$ otherwise.\\
$\|f\|_{p}$ && denotes the $L^p$ norm of $f$ on $M$, i.e. $\|f\|_p=\left(\int_M |f|^p\right)^{1/p}$.\\[2pt]
$c^-$ && denotes the smallest critical point of $F$.\\
$c^+$ && denotes the largest critical point of $F$.\\
$c_1<\cdots<c_n$ && is the list of \textit{unstable} critical point of $F$.\\
$k^-$ && denotes $\min(0,c_1)$.\\
$k^+$ && denotes $\max(0,c_n)$.\\
$\beta_{N}$ && denotes the volume of the $N$-dimensional sphere $S^{N}$, $\beta_N = |S^{N}|$.\\
\end{tabular}

\

\section{A priori estimates and compactness under (A2) and (D)}\label{sec:apriori}

Given $u$ a solution to \eqref{eq1}, let $P:\Omega\to \R$, be defined in terms of $u$ as \begin{align*}
P=\frac{|\nabla u|^2}{2}-F(u).
\end{align*}

The proofs of the following two lemmas follow closely those in Chapter 5 of \cite{Sperb}.  

\begin{lemma} 
Assume $\operatorname{Ric}\geq 0$. At points where $|\nabla u|\neq0$, the function $P$ satisfies a maximum principle. Namely, it holds \begin{align}\Delta P +  |\nabla u|^{-2}\langle \nabla P , \nabla P - \nabla|\nabla u|^2 \rangle \geq 0.\end{align}
\end{lemma}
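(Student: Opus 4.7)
The plan is to verify this inequality by the classical P-function computation, building on the Bochner formula and a Kato-type Cauchy--Schwarz bound. Since $u$ solves $\Delta u = f(u)$, we have $\nabla \Delta u = f'(u)\nabla u$ and $f(u)\Delta u = f(u)^2$. The Bochner formula then gives
\begin{equation*}
\Delta \tfrac{|\nabla u|^2}{2} = |\nabla^2 u|^2 + \langle \nabla \Delta u, \nabla u\rangle + \operatorname{Ric}(\nabla u, \nabla u) = |\nabla^2 u|^2 + f'(u)|\nabla u|^2 + \operatorname{Ric}(\nabla u, \nabla u),
\end{equation*}
and consequently, subtracting $\Delta F(u) = f'(u)|\nabla u|^2 + f(u)^2$, one obtains
\begin{equation*}
\Delta P = |\nabla^2 u|^2 + \operatorname{Ric}(\nabla u, \nabla u) - f(u)^2.
\end{equation*}

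Next I would handle the first-order term. Setting $a = \nabla \tfrac{|\nabla u|^2}{2}$ and $b = f(u)\nabla u$, one has $\nabla P = a - b$ and $\nabla P - \nabla|\nabla u|^2 = -a - b$, so the algebraic identity $\langle a-b, -a-b\rangle = -|a|^2 + |b|^2$ immediately yields
\begin{equation*}
\langle \nabla P, \nabla P - \nabla |\nabla u|^2\rangle = -\bigl|\nabla \tfrac{|\nabla u|^2}{2}\bigr|^2 + f(u)^2 |\nabla u|^2.
\end{equation*}
Dividing by $|\nabla u|^2$ and summing with the expression for $\Delta P$ above, the $f(u)^2$ terms cancel, leaving
\begin{equation*}
\Delta P + |\nabla u|^{-2} \langle \nabla P, \nabla P - \nabla|\nabla u|^2\rangle = |\nabla^2 u|^2 + \operatorname{Ric}(\nabla u, \nabla u) - |\nabla u|^{-2}\bigl|\nabla \tfrac{|\nabla u|^2}{2}\bigr|^2.
\end{equation*}

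Finally, the Kato-type inequality finishes the argument: since the $i$-th component of $\nabla\tfrac{|\nabla u|^2}{2}$ is $u_{ij}u^j$, Cauchy--Schwarz gives $|u_{ij}u^j|^2 \le (\sum_j u_{ij}^2)|\nabla u|^2$ for each $i$, so summing in $i$ yields $|\nabla \tfrac{|\nabla u|^2}{2}|^2 \leq |\nabla u|^2 |\nabla^2 u|^2$ wherever $|\nabla u|\neq 0$. Combined with the hypothesis $\operatorname{Ric}\geq 0$, all three remaining terms are nonnegative, proving the inequality. There is no real obstacle here: the only subtlety is the algebraic trick that makes the $f(u)^2$ contributions cancel, which is precisely what forces the particular combination $\nabla P - \nabla|\nabla u|^2$ in the statement.
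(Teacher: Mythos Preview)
Your proof is correct and follows essentially the same approach as the paper: both compute $\Delta P$ via Bochner's formula, expand the first-order term using $\nabla P = \nabla\tfrac{|\nabla u|^2}{2} - f(u)\nabla u$, and close with the Kato inequality $|\nabla^2 u|^2 \geq |\nabla|\nabla u||^2$ together with $\operatorname{Ric}\geq 0$. Your algebraic identity $\langle a-b,-a-b\rangle = -|a|^2 + |b|^2$ is just a slightly more explicit rendering of the paper's one-line computation of $f(u)^2$ in terms of $\langle \nabla P, \nabla P - \nabla|\nabla u|^2\rangle$ and $|\nabla|\nabla u||^2$.
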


\begin{proof}
From $\nabla P = |\nabla u|\nabla|\nabla u| - f(u)\nabla u $,  it follows that $$f(u)^2 = |\nabla u|^{-2} \langle \nabla P , \nabla P - \nabla|\nabla u|^2 \rangle + |\nabla|\nabla u||^2.$$

From Bochner's formula and $\Delta u =f(u)$, we obtain
\begin{align*}
\Delta P &= |\operatorname{Hess} u|^2 +\langle \nabla \Delta u, \nabla u \rangle + \operatorname{Ric}(\nabla u,\nabla u) - f'(u)|\nabla u|^2 -f(u)\Delta u \\
&= |\operatorname{Hess} u|^2 -f(u)^2+ \operatorname{Ric}(\nabla u,\nabla u).
\end{align*}
We obtain the inequality by combining both expressions, using $\operatorname{Ric}(\nabla u,\nabla u) \geq 0$ and the standard inequality $|\operatorname{Hess} u|^2 - |\nabla|\nabla u||^2\geq 0$. 
\end{proof}

\begin{lemma}
Let $\partial \Omega$ be strictly convex. Assume $u$ is a solution of \eqref{eq1} with either zero Dirichlet or zero Neumann boundary condition. Then, at a boundary point with $|\nabla u|\neq 0$, we must have $\partial_\nu P<0$.
\end{lemma}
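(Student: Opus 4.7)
The plan is to compute $\partial_\nu P = \langle \nabla P, \nu\rangle$ directly at a boundary point $x_0$ where $|\nabla u(x_0)|\neq 0$, and conclude the desired sign from the strict convexity of $\partial \Omega$. From the definition of $P$ one has $\nabla P = \operatorname{Hess} u \cdot \nabla u - f(u)\nabla u$, so
\[
\partial_\nu P = \operatorname{Hess} u(\nabla u, \nu) - f(u)\,\partial_\nu u.
\]
The remaining computation splits into the two boundary condition cases.

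In the Neumann case, the condition $\partial_\nu u \equiv 0$ on $\partial\Omega$ both kills the second term and forces $\nabla u|_{\partial\Omega}$ to be tangent to the boundary. Setting $T=\nabla u$ and using the symmetry of the Hessian, I would write
\[
\operatorname{Hess} u(\nu, T) = T(\nu u) - (\nabla_T \nu)(u).
\]
The function $\nu u = \partial_\nu u$ vanishes identically on $\partial \Omega$ and $T$ is tangent to $\partial\Omega$, so its derivative along $T$ vanishes. This leaves $\operatorname{Hess} u(\nu, T) = -\langle \nabla_T \nu, \nabla u\rangle = -II(T,T)$, where $II$ denotes the second fundamental form of $\partial\Omega$ with respect to the outward unit normal. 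Strict convexity gives $II(T,T)>0$ whenever $T\neq 0$, yielding $\partial_\nu P < 0$.

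In the Dirichlet case $u|_{\partial \Omega} \equiv 0$, so all tangential derivatives of $u$ vanish and $\nabla u = (\partial_\nu u)\,\nu$ is purely normal. Decomposing the Laplacian at the boundary via $\Delta u = \Delta^{\partial\Omega} u + \partial_\nu^2 u + H \partial_\nu u$, where $H$ is the mean curvature of $\partial\Omega$, and using $\Delta u = f(0)$ together with $\Delta^{\partial\Omega} u = 0$, one finds $\partial_\nu^2 u = f(0) - H\partial_\nu u$ on $\partial\Omega$. Since $\partial_\nu(|\nabla u|^2/2) = (\partial_\nu u)\,\partial_\nu^2 u$ on the boundary, substitution gives
\[
\partial_\nu P = (\partial_\nu u)\,\partial_\nu^2 u - f(0)\,\partial_\nu u = -H(\partial_\nu u)^2,
\]
and strict convexity ($H>0$) yields the desired sign whenever $\partial_\nu u \neq 0$, which is equivalent to $|\nabla u|\neq 0$ in this case.

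The main technical point is the tensorial manipulation in the Neumann case: making the second fundamental form appear with the correct sign from the symmetry of the Hessian applied at the boundary, together with the fact that the Neumann condition is an identity on $\partial\Omega$ and can therefore be tangentially differentiated. The Dirichlet case is essentially a bookkeeping exercise once the tangent-normal decomposition of $\Delta u$ is in place, and the whole argument parallels the corresponding computation in Chapter 5 of \cite{Sperb}.
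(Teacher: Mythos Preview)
Your proof is correct and follows essentially the same approach as the paper: both split into the Dirichlet and Neumann cases, use the tangent--normal decomposition of $\Delta u$ at the boundary in the Dirichlet case to produce the mean curvature term, and in the Neumann case extract the second fundamental form from $\operatorname{Hess} u(\nabla u,\nu)$ via the identity $\operatorname{Hess} u(T,\nu)=T(\partial_\nu u)-(\nabla_T\nu)u$ (the paper writes this as $\langle \nu,\nabla_{\nabla u}\nabla u\rangle = -\langle \nabla_{\nabla u}\nu,\nabla u\rangle$). The only cosmetic difference is the sign convention for $II$: the paper takes $II_\nu(X,X)=-\langle \nabla_X\nu,X\rangle$, negative definite for strictly convex boundary, whereas you take the opposite sign; both lead to $\partial_\nu P<0$.
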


\begin{proof}
For the Dirichlet case, notice that from the expression
$$f(u)=\Delta u= \langle \nabla_\nu \nabla u, \nu \rangle + \Delta_{\partial \Omega} u -\langle \vec{H}, \nabla u \rangle,$$ where $\vec{H}$ is the mean curvature vector of the level sets of $u$, and since $\nu$ and $\nabla u$ are parallel, we obtain $$f(u)=\bigg\langle \nu ,\frac{\nabla u}{|\nabla u|}\bigg\rangle\bigg(|\nabla u|^{-1}  \langle \nabla_\nu \nabla u, \nabla u \rangle- \langle\vec{H} ,\nu \rangle|\nabla u|\bigg).$$ Multipliying by $\langle\nu,\frac{\nabla u}{|\nabla u|} \rangle$ and rearranging the terms we get the desired inequality as long as $\partial \Omega$ is strictly mean-convex (this holds, in particular, when $\partial \Omega$ is strictly convex) $$0>\langle\vec{H} ,\nu \rangle|\nabla u|^2=  \partial_\nu\bigg( \frac{|\nabla u|^2}{2} - F(u)\bigg)=\partial_\nu P.$$

In the Neumann case, $\nu$ and $\nabla u$ are perpendicular along the boundary. We conclude 
\begin{align*}
\partial_\nu P & = \bigg\langle \nu, \nabla \frac{|\nabla u|^2}{2} - f(u)\nabla u \bigg \rangle\\
&= \langle \nu , \nabla_{\nabla u}\nabla u \rangle \\
&= - \langle \nabla_{\nabla u}\nu ,\nabla u\rangle \\
& = II_\nu(\nabla u,\nabla u)<0,
\end{align*}
where $II_\nu$ is the second fundamental form of $\partial \Omega$ with respect to the exterior normal $\nu$, which is a negative definite quadratic form when $\partial \Omega$ is strictly convex. 
\end{proof}

\begin{corollary}
Let $\Omega$ be bounded and strictly convex with $\operatorname{Ric}\geq 0$. Assume $u$ is a solution of \eqref{eq1} with either zero Dirichlet or zero Neumann boundary condition, then the maximum of $P$ in $\overline{\Omega}$ is attained at a critical point of $u$.
\end{corollary}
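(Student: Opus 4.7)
The plan is to argue by contradiction: suppose the maximum of $P$ on $\overline{\Omega}$ is not attained at any critical point of $u$, and pick a maximizing point $x_0$ with $\nabla u(x_0)\neq 0$. The two previous lemmas are tailored to exactly this situation and provide interior and boundary information respectively, so the proof reduces to a standard strong maximum principle / Hopf lemma combination, with a connectedness argument used to link the interior and boundary cases.

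First I would dispose of the case $x_0\in\partial\Omega$. At a boundary maximum the first derivative test forces $\partial_\nu P(x_0)\geq 0$, but since $|\nabla u(x_0)|\neq 0$ the second lemma gives $\partial_\nu P(x_0)<0$, contradiction. So $x_0\in\Omega$. On the open set $U=\{x\in\Omega:\nabla u(x)\neq 0\}$, the first lemma can be rewritten as $\Delta P+\langle b,\nabla P\rangle\geq 0$ with $b=|\nabla u|^{-2}(\nabla P-\nabla|\nabla u|^2)$, whose coefficients are bounded on any compact subset of $U$ by standard $C^{2,\alpha}$ regularity of $u$. Hence the strong maximum principle applies on the connected component $U_0$ of $U$ containing $x_0$, and yields $P\equiv P(x_0)=\max_{\overline\Omega}P$ on $U_0$.

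Now I would examine the topological boundary of $U_0$ inside $\overline\Omega$. If $\partial U_0\cap\Omega\neq\emptyset$, any point $y_0$ there is a limit of points of $U_0\subset U$ but lies outside $U$, so $\nabla u(y_0)=0$; by continuity $P(y_0)=\max P$, a critical point attaining the maximum, contradicting the standing assumption. Otherwise $\partial U_0\cap\Omega=\emptyset$, and connectedness of $\Omega$ forces $U_0=\Omega$, so $\nabla u\neq 0$ throughout $\Omega$ and $P$ is constant on $\Omega$; extending by continuity $P$ is constant on $\overline\Omega$, hence $\partial_\nu P\equiv 0$ on $\partial\Omega$. If $\nabla u$ vanishes at some boundary point we again produce a critical point attaining the max (contradiction); otherwise the second lemma gives $\partial_\nu P<0$ everywhere on $\partial\Omega$, contradicting $\partial_\nu P\equiv 0$.

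The only mildly delicate step is the application of the strong maximum principle on $U_0$: $U_0$ need not have smooth boundary, and the drift $b$ is only locally bounded since it degenerates as $|\nabla u|\to 0$. The point is that the conclusion $P\equiv P(x_0)$ propagates along any compactly contained chain of balls in $U_0$, which is all that is needed. Once this is in place the argument is a clean dichotomy between $\partial U_0\cap\Omega\neq\emptyset$ (producing a critical point where the max is attained) and $U_0=\Omega$ (contradicted at the boundary via the second lemma).
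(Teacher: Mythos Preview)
Your argument is correct and is exactly the standard strong maximum principle / Hopf lemma combination that the two preceding lemmas are set up for; the paper in fact states the corollary without proof, treating it as immediate, and your write-up simply fills in the routine details (interior propagation on the set $\{\nabla u\neq 0\}$, boundary exclusion via $\partial_\nu P<0$). The connectedness dichotomy you run through is the right way to handle the possibility that $\{\nabla u\neq 0\}$ is not all of $\Omega$, and your remark about the locally bounded drift $b$ correctly isolates the only point needing care.
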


\begin{lemma}\label{unstableprop}
Let $c_1,\dots,c_n$ be the unstable critical points of $F$. If $u$ is an unstable solution to \eqref{eq1} then
\begin{enumerate}
\item $[u_\min,u_\max]\cap \{c_1,\dots,c_n\}\neq \emptyset$ and
\item $\max_{[u_\min,u_\max]} F \leq \max \{F(c_1),\dots F(c_n) \}$.
\end{enumerate}
\end{lemma}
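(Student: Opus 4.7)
The plan is to combine the pointwise maximum principle for $u$ with the integral identity $\int_\Omega f(u)=0$ (obtained by integrating $\Delta u = f(u)$ against the constant $1$, using $\partial_\nu u = 0$ in case (D1) or the absence of a boundary in (D2)) together with the fact that the critical points of the Morse function $F$ strictly alternate between local maxima and local minima on $\R$. The case of constant $u$ is immediate: an unstable constant solution must sit at a critical point of $F$ which is not a local minimum (else it would be stable), so it is a local maximum, i.e., some $c_i$, and both (1) and (2) are trivial. I therefore focus on the non-constant case, for which the range $[u_\min,u_\max]$ is a nondegenerate interval.

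For part (1), I argue by contradiction: suppose $[u_\min,u_\max]\cap\{c_1,\dots,c_n\}=\emptyset$, so the range of $u$ contains no local maximum of $F$. By the alternating structure, $[u_\min,u_\max]$ contains at most one critical point of $F$, and any such point is a local minimum $m$. If no critical point of $F$ lies in the range, $f(u)$ has constant nonzero sign on a set of full measure, contradicting $\int_\Omega f(u)=0$. If $m$ exists and equals an endpoint of the range, the same integral identity is violated, since $f(u)$ then has a definite sign on a set of positive measure. Thus $m$ lies strictly in $(u_\min,u_\max)$. Setting $v=u-m$, on the nonempty open set $\{v>0\}$ we have $\Delta v = f(u) > 0$, so $v$ is strictly subharmonic; the strong maximum principle excludes an interior positive maximum of $v$, while the Neumann condition $\partial_\nu v=0$ conflicts with the strict Hopf outward sign at a boundary maximum. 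Hence $\{v>0\}=\emptyset$; symmetrically $\{v<0\}=\emptyset$, so $u\equiv m$, contradicting our standing assumption that $u$ is non-constant.

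For part (2), let $u^*\in[u_\min,u_\max]$ attain $\max_{[u_\min,u_\max]}F$. If $u^*$ lies in the interior of the interval, it is a local maximum of $F$, i.e., some $c_i$, and $F(u^*)\leq\max_j F(c_j)$. Otherwise $u^*$ is an endpoint; say $u^*=u_\max$, the other case being analogous. The maximality of $F(u_\max)$ forces $f(u_\max)\geq 0$, while at any $x\in\overline{\Omega}$ with $u(x)=u_\max$ the maximum principle forces $f(u_\max)\leq 0$: interiorly via $\Delta u(x)\leq 0$ at a local maximum, and at a boundary point via the Neumann condition, which contradicts the strict outward Hopf sign that would follow if $f(u_\max)>0$ made $u$ strictly subharmonic near $x$. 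Thus $f(u_\max)=0$, so $u_\max$ is a critical point of $F$; a local minimum is excluded because it would make $F$ strictly larger just to the left of $u_\max$, against the maximality. Hence $u_\max=c_i$ for some $i$ and (2) follows.

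The most delicate step is the boundary maximum principle argument: one must verify that a boundary point realizing a one-sided extremum of $u$ cannot sustain the strict subharmonicity or superharmonicity of $v$ (resp. of $u$) under the Neumann condition alone, via Hopf's lemma. In the closed case (D2) this difficulty disappears and only the interior strong maximum principle is required; in case (D1) the smoothness of $\partial\Omega$ is what allows Hopf to be applied cleanly.
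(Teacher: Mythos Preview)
Your proof is correct. The approach differs from the paper's in two places, and both variants are worth noting.

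For part (1), the paper argues entirely via the pointwise maximum principle: at the point where $u=u_{\max}$ one has $\Delta u\le 0$, hence $f(u_{\max})\le 0$, and similarly $f(u_{\min})\ge 0$; together with the sign of $f$ on an interval containing no local maximum of $F$ this immediately forces $u_{\min}=u_{\max}=m$. You instead invoke the integral identity $\int_\Omega f(u)=0$ to dispatch the cases with no critical point (or with $m$ at an endpoint), and then use the strong maximum principle/Hopf lemma only for the case $m\in(u_{\min},u_{\max})$. Both work; the paper's route is slightly shorter because the single pointwise inequality $f(u_{\max})\le 0$ (which you also obtain later) handles all subcases at once and avoids the Hopf boundary discussion you flag as delicate.

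For part (2), the paper argues structurally: it shows that outside $[c_1,c_n]$ there is at most one local minimum on each side, uses the maximum principle to confine $[u_{\min},u_{\max}]$ between those minima, and then observes $F$ is monotone on $[u_{\min},u_{\max}]\setminus[c_1,c_n]$. Your argument is more direct and avoids this case analysis: you locate the maximizer $u^*$ of $F$ on $[u_{\min},u_{\max}]$ and show it must be some $c_i$, either because it is an interior critical point, or because an endpoint maximizer is forced (via $f(u_{\max})\ge 0$ from one-sided maximality and $f(u_{\max})\le 0$ from the maximum principle) to be a critical point of $F$ that cannot be a local minimum. This is a clean alternative.

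One small simplification you could make in the boundary discussion: at a boundary maximum $p$ with $\partial_\nu u(p)=0$, the tangential gradient also vanishes, and writing $\Delta u=\partial_\nu^2 u + H\partial_\nu u + \Delta_{\partial\Omega} u$ gives $\Delta u(p)\le 0$ directly (both second-order terms are $\le 0$ since $p$ maximizes $u$ on $\overline{\Omega}$ and on $\partial\Omega$). This yields $f(u_{\max})\le 0$ without appealing to Hopf's lemma, and streamlines both your part (1) and part (2) arguments.
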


\begin{proof}
Item (1) follows by contradiction directly from the maximum principle. In fact, assuming $[u_\min,u_\max]\cap \{c_1,\dots,c_n\}=\emptyset$ implies there is at most one local minimum of $F$ in $[u_\min,u_\max]$. If there are no local minimums, then $f'$ has a sign on this interval, contradicting $\Delta u=f'(u)$ either at the maximum or minimum of $u$ (depending on the sign of $f'$). On the other hand, if there is exactly one local minimum $m\in[u_\min,u_\max]$, then the maximum principle is not contradicted only if $u=m$. However, in this case the solution is stable by Lemma \ref{stable2}. 

Item (2) follows along similar lines. If there are no critical points outside of $[c_1,c_n]$ then $F$ decreases with respect to its distance to $[c_1,c_n]$ and the result follows. If there are critical points outside of $[c_1,c_n]$ then these are all local minima and there is at most one on each side of $[c_1,c_n]$. As above, by the maximum principle, these critical points bound $u_\min$ and $u_\max$ accordingly. Then, in this case $F$ decreases with respect to its distance to $[c_1,c_n]$ in $[u_\max,u_\min]$.   \end{proof}

\begin{proposition}\label{gradientbound} 
Assume (A2). There exists a positive constant $B_0=B_0(\Omega,C,K)$, such that for any $u$ which is an unstable solution to \eqref{eq1} we have $$\sup_{\Omega} |\nabla u| \leq B_0,$$ where $K=\max\{|k^-|,k^+,\max_{[k^-,k^+]}|f|\}$ and $C$ is given by hypothesis (A2).
\end{proposition}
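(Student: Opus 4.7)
The approach is the classical $P$-function method, available by the corollary just proved. Let $P(x) = \tfrac12|\nabla u(x)|^2 - F(u(x))$. The corollary gives $\max_{\overline\Omega} P = P(x_0) = -F(u(x_0))$ for some critical point $x_0$ of $u$, hence the pointwise inequality
\[
\tfrac12|\nabla u(x)|^2 \le F(u(x)) - F(u(x_0)), \qquad x \in \overline\Omega.
\]
The proof reduces to bounding the oscillation of $F\circ u$ by a constant depending only on $\Omega$, $C$, and $K$. The upper bound $F(u(x))\le\max_i F(c_i)$ is Lemma \ref{unstableprop}(2); this is controlled by $K$ alone since each $c_i\in[k^-,k^+]$ and $|f|\le K$ on that interval.

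The substantive step is the upper bound on $-F(u(x_0))$. Integrating hypothesis (A2) yields the quadratic lower bound $F(t)\ge F(0)-C(|t|+t^2/2)$, so the problem reduces to an a~priori $L^\infty$-bound on $u$ itself, in terms of $\Omega$, $C$, $K$. For the latter I would combine three ingredients: (i) by Hopf's lemma and the Neumann boundary condition, both $u_{\min}$ and $u_{\max}$ are attained at interior critical points of $u$, so the maximum principle gives $f(u_{\max})\le 0\le f(u_{\min})$; (ii) Lemma \ref{unstableprop}(1) furnishes a point $y\in\Omega$ with $u(y)\in[k^-,k^+]$, anchoring the range of $u$ at a controlled value; and (iii) hypothesis (A2) yields the one-sided distributional subsolution inequality $-\Delta u\le C(1+|u|)$. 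Standard Moser iteration applied to (iii), with the anchor from (ii) used (via a Poincar\'e-type argument) to prevent the iterative $L^2$-estimates from being vacuous, then produces $\|u\|_\infty\le M(\Omega,C,K)$. Substituting back into the $P$-estimate completes the argument.

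The main obstacle is precisely the $L^\infty$-bound on $u$. Under (A1) the local-minimum endpoints of the critical set of $F$ immediately confine the range of $u$ by the maximum principle; under (A2) the potential is allowed to have ``bottomless'' quadratically-decaying wells, and one must combine the Morse structure of $F$ (finite critical set), the anchor from Lemma \ref{unstableprop}(1), the sign information from (i), and the one-sided growth of (A2) to keep $u$ from escaping to infinity. The decay rate permitted by (A2) is exactly borderline in this step: any supra-quadratic decay would render the Sobolev--Moser iteration inapplicable in the standard form, which explains the sharpness discussed in the introduction and the connection to \cite{BahriLions}.
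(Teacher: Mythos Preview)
Your setup via the $P$-function is correct and matches the paper exactly: the corollary gives $\max_{\overline\Omega} P = -F(u_0)$ at a critical point of $u$, and Lemma~\ref{unstableprop}(2) controls $F(u(x))$ from above by a quantity depending only on $K$. The difficulty, as you correctly locate, is bounding $-F(u_0)$ from above, which amounts to bounding $|u_0|$.

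The genuine gap is your proposed route to the $L^\infty$ bound. Moser iteration applied to $-\Delta u \le C(1+|u|)$ only yields $\|u\|_\infty \le C'(\|u\|_{L^2}+1)$; with a linear right-hand side there is no absorption, so an $L^2$ input is essential. But a single anchor point $u(y)\in[k^-,k^+]$ does not control $\|u\|_{L^2}$: any Poincar\'e-type passage from one point value to an integral norm requires a gradient bound, which is precisely what you are trying to prove. The argument is circular as written, and the borderline nature of (A2) that you flag is exactly why no off-the-shelf elliptic estimate resolves it --- the linear term can resonate with the Laplacian and there is no coercivity to exploit.

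The paper breaks the circularity by using the $P$-estimate \emph{itself} to manufacture the missing oscillation bound. On the superlevel set $\{k^+ \le u \le u_0 - \delta(u_0-k^+)\}$ one has, from $P\le P(p_0)$ and (A2),
\[
|\nabla(u_0-u)|^2 \le 2\bigl(F(u)-F(u_0)\bigr) = 2\int_u^{u_0}(-f)\,ds \le 2C\int_u^{u_0}(1+s)\,ds,
\]
which after elementary manipulation becomes $|\nabla(u_0-u)| \le A(\delta)\,|u_0-u|$ with an explicit $A(\delta)$. Integrating this differential inequality (Gronwall) along a geodesic from a point on $\{u=k^+\}$ --- whose existence is guaranteed by the anchor from Lemma~\ref{unstableprop}(1) --- to a point on $\{u=u_0-\delta(u_0-k^+)\}$ yields a bound $u_0 \le k^+ + C(\Omega,C,K)$ for a suitable choice of $\delta$. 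Once $u_0$ is bounded, the $P$-estimate immediately gives the gradient bound, and only \emph{then} (in the subsequent Proposition~\ref{apriori}) does the paper deduce the global $L^\infty$ bound on $u$ from the gradient bound plus the anchor. So the logical order is the reverse of what you propose: gradient bound first, $L^\infty$ bound second.
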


\begin{proof}
Let $p_0 \in \overline{\Omega}$ be a point where $P$ attains its maximum and denote $u_0=u(p_0)$. By a previous result (above) we have that $\nabla u(p_0)=0$, so $P(p_0)=-F(u_0)$. 

\

\noindent\underline{\textbf{Case $u_0\in [k^-,k^+]$:}} We have the following inequalities on all of $\Omega$, 
\begin{align*}
|\nabla u|^2 & \leq F(u)-F(u_0) \\
&\leq F(c_k) - F(u_0) \\
&\leq |c_k-u_0| \max_{[k^-,k^+]}|f|\\
&\leq |k^+-k^-| \max_{[k^-,k^+]}|f|,
\end{align*} where the first inequality follows from $P\leq P(p_0)$, the constant $c_k \in \{c_1,\dots,c_n\}$ is such that $F(c_k)=\max\{F(c_1),\dots,F(c_n)\}$ and the second inequality  follows from Lemma \ref{unstableprop}.

\

\noindent\underline{\textbf{Case $u_0> k^+$:}} We subdivide the proof of this case into a series of claims. 

\begin{claim}
$u_\max =u_0$.
\end{claim}

\begin{proof}[Proof of Claim 1]Let $p^+ \in \Omega$ such that $u(p^+)=u_\max$. Clearly, $u_0\leq u_\max$ and $\nabla u(p^+)=0$. To proceed by contradiction assume $u_0<u_\max$. Since $F$ is strictly decreasing in $[k^+,u_\max]$, we have $F(u_\max)< F(u_0)$. On the other hand, by our assumption on $p_0$, we have $-F(u_\max)=P(p^+)\leq P(p_0)=-F(u_0),$ which is contradiction. This proves the claim. 
\end{proof}

\begin{claim}
For all $\delta \in (0,1)$, the gradient bound $|\nabla (u_0-u)| \leq A(\delta) |u_0-u|$ holds on the level set $\{ k^+ \leq u\leq u_0-\delta(u_0-k^+)\}$, with $A(\delta)=\sqrt{(2-\delta)C+ 2C\frac{(1+k^+ )}{\delta(u_0-k^+)} }.$
\end{claim}
\begin{proof}[Proof of Claim 2]
In what follows we use the fact that the function $u_0-u$ is positive on the level set $\{ k^+ \leq u\leq u_0-\delta(u_0-k^+)\}$, which follows from the previous claim. In the steps below there are three inequalities, the first one follows from $P\leq P(p_0)$, the second one uses the linear bound on $f$ and the fact that $u_0 \geq u\geq k^+\geq 0$, and the third one uses $u\in [k^+,u_0-\delta(u_0-k^+)].$

\begin{align*}
|\nabla (u_0-u)|^2 & = |\nabla u|^2\\
&\leq 2F(u)-2F(u_0) \\
& = 2\int_{u}^{u_0} -f(s) ds \\
& \leq 2C\int_{u}^{u_0} (1+s) ds \\
& = 2C(u_0-u) + C(u_0^2-u^2) \\
& = C\bigg( \frac{2 + (u_0+u)}{u_0-u}  \bigg) |u_0-u|^2\\
& \leq C\bigg( \frac{2 + (2u_0-\delta(u_0-k^+))}{\delta(u_0-k^+)}  \bigg) |u_0-u|^2\\
& = C\bigg(  (2-\delta)+ 2\frac{(1+k^+ )}{\delta(u_0-k^+)}  \bigg) |u_0-u|^2,
\end{align*}
which is what we wanted to prove.
\end{proof} 

\

\begin{claim} There exists $\delta\in(0,1)$, depending only on $\Omega$ and $C$, such that for any unstable solution we have
$$u_0 \leq k^+ + \frac{2d^2C(1+k^+)}{\delta\log(\delta)^2 +\delta(\delta-2)d^2C}.$$ 
\end{claim}

\begin{proof}[Proof of Claim 3]
Since $u$ is an unstable solution, by Proposition \ref{unstableprop} the set $\{c_1\leq u \leq c_n\}$ must be non-empty. In particular, since $u_\max=u_0$ by Claim 1, the level sets $\{u=s\}$, with $s\in[k^+,u_0]$ are non-empty.

Choose points $q_0\in \{u=u_0-\delta(u_0-k^+)\}$ and $q_1\in \{u=k^+\}$ and let $\gamma : [0,1]\to \Omega$ be a minimizing geodesic joining $q_0$ with $q_1$ (which exists because $\partial \Omega$ is strictly convex). We can always assume that $\gamma$ is contained in the set $\{k^+\leq u \leq u_0-\delta(u_0-k^+)\}$ by redefining $q_0$ to be $\gamma(t_0)$, where $t_0\in [0,1)$ is the last time $u(\gamma(t_0))=u_0-\delta(u_0-k^+)$. and $q_1$ to be $\gamma(t_1)$, where $t_1\in(t_0,1]$ is the first time $u(\gamma(t_1))=k^+$. If necessary, we can reparametrize this segment of geodesic so that it is defined again over $[0,1]$ and $|\gamma'(t)|=\operatorname{dist}(q_0,q_1)\leq \operatorname{diam}(\Omega)=d$, for all $t\in[0,1]$.

The gradient inequality obtained in Claim 2 then holds along $\gamma:[0,1]\to \{k^+\leq u \leq u_0-\delta(u_0-k^+)\}$ and denoting $(u_0-u)(t)=u_0-u(\gamma(t))$ it follows
$$(u_0-u)'(t)\leq |u'(t)|\leq |\gamma'(t)||\nabla u(\gamma(t))| \leq A(\delta) (u_0-u)(t).$$

From Gronwall's inequality we obtain $$u_0-k^+\leq \delta (u_0-k^+) \exp ( d \times A(\delta)),$$
which translates into $1 \leq \delta \exp( d \times A(\delta) )$, and then into $-\log(\delta) \leq d A(\delta)$, after taking logarithm on both sides. Squaring the expression, substituting the explicit formula for $A(\delta)$ and doing some simple arithmetic, one gets
$$(u_0-k^+)(\log(\delta)^2 +\delta d^2C -2 d^2C) \leq  \frac{2d^2C(1+k^+)}{\delta}.$$

Finally, choosing $\delta \in(0,1)$ small enough, we can guarantee that the quantity $\log(\delta)^2 +\delta d^2C -2 d^2C$ is positive. This proves the claim.
\end{proof}

\begin{claim}
Let $R_0=k^+ + \frac{2d^2C(1+k^+)}{\delta\log(\delta)^2 +\delta(\delta-2)d^2C}$ be the constant from the previous claim. Then $$|\nabla u|^2 \leq  |k^+-k^-|\max_{[k^-,k^+]}|f| + C(R_0^2-R_0).$$
\end{claim}

\begin{proof} We have the following inequalities
\begin{align*}
|\nabla u|^2 & \leq F(u)-F(u_0)  \\
&\leq F(c_k) - F(u_0)\\
&= \int_{c_k}^{k^+}-f(s)ds + \int_{k^+}^{u_0}-f(s)ds\\
&\leq |k^+-k^-|\max_{[k^-,k^+]}|f| + C\int_{0}^{R_0}(1+s)ds\\
&\leq |k^+-k^-|\max_{[k^-,k^+]}|f| + CR_0(1+R_0), 
\end{align*} where the first inequality follows from $P\leq P(p_0)$, the constant $c_k \in \{c_1,\dots,c_n\}$ is such that $F(c_k)=\max\{F(c_1),\dots,F(c_n)\}$ and the second inequality  follows from Proposition \ref{unstableprop}.
\end{proof}

Finally, notice that Claim 4 concludes the proposition when $u_0>k^+$ since $R_0$ depends only on $\Omega$, $C$ and $k^+$.

\

\noindent\underline{\textbf{Case $u_0< k^-$:}} This case can be handled using exactly the same computations as above but substituting the roles of $u_\max$, $u_0-u$ and $k^+$ in Claims 1-4, for $u_\min$, $u-u_0$ and $k^-$, respectively.

\end{proof}

\

\noindent\textbf{Proof of Proposition \ref{apriori}.}

\begin{proof} Let $B_0=B_0(\Omega,C,K_0)$ be the constant from Proposition \ref{gradientbound}, i.e.  $|\nabla u|\leq B_0$ at all points of $\Omega$. Since by Proposition \ref{unstableprop} the level set $\{c_1\leq u\leq c_n\}$ is non-empty, $\overline{\Omega}$ is compact and $\{c_1,\dots,c_n\}\subset [k^-,k^+]$, we obtain bounds on $\|u\|_{L^\infty(\Omega)}$ in terms of $\Omega$, $K$ and $B_0$.
\end{proof}

\

\begin{corollary}\label{a2compact}
Under hypothesis (A2) and (D), the space of solutions of equation \eqref{eq1} is compact.
\end{corollary}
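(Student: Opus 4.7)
The plan is to combine the a priori bound of Proposition \ref{apriori} with the known classification of stable solutions under hypothesis (D), and then promote the $C^1$ bound to higher regularity via standard elliptic theory. First, as noted in the introduction (see \cite{EuclideanStable, Farina} and the reference to Theorem \ref{stableconst}), under (D) every stable solution of \eqref{eq1} is a constant, and hence must equal a local minimum of $F$. Since $F$ is Morse with finitely many critical points, the set of stable solutions is a finite collection of constants, which is automatically compact.

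Next, for unstable solutions, Proposition \ref{apriori} provides the uniform estimate $\|u\|_{L^\infty(\Omega)}+\|\nabla u\|_{L^\infty(\Omega)}\le M_0$, with $M_0$ depending only on $\Omega$, $C$, and $K$. Since $f=F'$ is smooth, the composition $f(u)$ is uniformly bounded in $C^{0,\alpha}(\overline{\Omega})$ along the entire family of unstable solutions. Applying interior and boundary Schauder estimates to $\Delta u = f(u)$ with the homogeneous Neumann condition (or using the fact that we are on a closed manifold in case (D2)) yields a uniform bound $\|u\|_{C^{2,\alpha}(\overline{\Omega})}\le M_1$. Bootstrapping gives uniform bounds in $C^{k,\alpha}$ for every $k$, so the family of unstable solutions is precompact in $C^2(\overline{\Omega})$ by Arzelà–Ascoli.

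Given any sequence $\{u_i\}$ of solutions to \eqref{eq1}, we can thus pass to a subsequence which is either eventually constant (if infinitely many $u_i$ are stable, using the finiteness of the stable set) or converges in $C^2(\overline{\Omega})$ to a limit $u_\infty$ (if infinitely many are unstable). Passing to the limit in the equation, the boundary condition, and the energy shows that $u_\infty$ is itself a smooth solution of \eqref{eq1} — possibly stable, but in any case in the solution set. Hence the full solution space is sequentially compact.

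The only step requiring genuine care is the uniform Schauder estimate near $\partial \Omega$ in case (D1), where one must invoke the boundary Schauder theory for the Neumann problem on a smooth domain, using the $C^1$ bound of Proposition \ref{apriori} to control $f(u)$ in $C^{0,\alpha}$ up to the boundary. Once this standard input is in place, the rest of the argument is a routine Arzelà–Ascoli compactness, so no deeper ideas beyond the a priori estimate are needed.
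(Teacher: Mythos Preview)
Your argument is correct and follows the same route as the paper: the corollary is deduced directly from the uniform pointwise bounds of Proposition~\ref{apriori} together with standard elliptic regularity and Arzel\`a--Ascoli. The paper's own proof is a single sentence (``This is an immediate consequence of the pointwise bounds obtained above''), so your write-up is in fact more thorough---in particular, you explicitly handle the stable solutions (which are not covered by Proposition~\ref{apriori}) via the classification of stable solutions as finitely many constants, a point the paper's one-line proof leaves implicit.
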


\begin{proof}
This is an immediate consequence of the pointwise bounds obtained above. 
\end{proof}



\section{Compactness under (A3)}

Hypothesis (A3) assumes subcritical growth and a version of the Ambrosetti-Rabinowitz condition adapted to the case of zero Neumann boundary condition. The method of proof is standard but we include it for convenience of the reader.

\begin{lemma} \label{lem_ps}
Under hypothesis (A3), $E$ satisfies the Palais-Smale condition. 
\end{lemma}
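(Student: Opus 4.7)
The plan is to run the standard Palais--Smale scheme for subcritical functionals of Ambrosetti--Rabinowitz type. Fix a PS sequence $(u_n)\subset W^{1,2}(\Omega)$, i.e.\ $|E(u_n)|\le C$ and $\|E'(u_n)\|_{(W^{1,2})^*}\to 0$, and form the combination
$$
E(u_n)-\rho E'(u_n)(u_n)=\Bigl(\tfrac12-\rho\Bigr)\int_\Omega|\nabla u_n|^2+\int_\Omega\bigl[F(u_n)-\rho u_n f(u_n)\bigr].
$$
On $\{u_n>R_0\}$ the second integrand is $\ge u_n^2$ by the AR inequality in (A3); on $\{u_n\le R_0\}$ it is bounded below by a constant plus a lower-order term in $u_n$, using that $F$ is monotone on $(-\infty,c^-]$ (since $c^-$ is the smallest critical point of the Morse function $F$ and a local minimum, $f$ keeps its sign on $(-\infty,c^-)$), and in particular $F$ is bounded below on $(-\infty,R_0]$. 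Since $\rho<1/2$, rearranging yields an inequality of the form $\|u_n\|_{W^{1,2}}^2\le C_1+C_2\|u_n\|_{W^{1,2}}$, and hence $(u_n)$ is bounded in $W^{1,2}(\Omega)$.

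Passing to a subsequence, $u_n\rightharpoonup u$ in $W^{1,2}(\Omega)$. Since $p<2N/(N-2)$, the Rellich--Kondrachov embedding $W^{1,2}(\Omega)\hookrightarrow L^p(\Omega)$ is compact, so $u_n\to u$ strongly in $L^p(\Omega)$ and in $L^2(\Omega)$. The pointwise bound $|f(t)|\le C(1+|t|^{p-1})$ then makes the Nemytskii map $v\mapsto f(v)$ continuous from $L^p(\Omega)$ into $L^{p/(p-1)}(\Omega)$, so $f(u_n)\to f(u)$ strongly in $L^{p/(p-1)}(\Omega)$. To finish, rewrite
$$
\int_\Omega|\nabla(u_n-u)|^2=\bigl(E'(u_n)-E'(u)\bigr)(u_n-u)-\int_\Omega\bigl(f(u_n)-f(u)\bigr)(u_n-u).
$$
The first term tends to $0$ because $\|E'(u_n)\|_{(W^{1,2})^*}\to 0$, $(u_n-u)$ is bounded in $W^{1,2}$, and $E'(u)(u_n-u)\to 0$ by weak convergence; the second term tends to $0$ by H\"older using the strong $L^{p/(p-1)}\times L^p$ convergence above. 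Together with strong $L^2$ convergence this gives $u_n\to u$ in $W^{1,2}(\Omega)$, which is the Palais--Smale condition.

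The delicate point, beyond the textbook AR argument, is the bookkeeping for the $W^{1,2}$ bound on the set $\{u_n\le R_0\}$ where (A3) is silent: one must use the Morse structure of $F$ and the local-minimum property of $c^-$ to rule out bad behavior of $F$ and of $tf(t)$ at $-\infty$, so that the contribution from this set gets absorbed into the good $u_n^2$-term coming from $\{u_n>R_0\}$. The remaining pieces---the Nemytskii continuity in $L^{p/(p-1)}$ and the final rearrangement via the weak Euler--Lagrange identity---are routine once the uniform $W^{1,2}$ bound is established.
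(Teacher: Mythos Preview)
Your proof is correct and follows essentially the same standard Ambrosetti--Rabinowitz scheme as the paper's: form $E(u_n)-\rho\,E'(u_n)(u_n)$ to obtain the $W^{1,2}$ bound, then use Rellich--Kondrachov together with the subcritical growth of $f$ to upgrade weak to strong convergence. The only differences are presentational---you use the cleaner identity $\int|\nabla(u_n-u)|^2=(E'(u_n)-E'(u))(u_n-u)-\int(f(u_n)-f(u))(u_n-u)$ for the final step, and you flag the bookkeeping on $\{u_n\le R_0\}$ explicitly, whereas the paper simply asserts that the corresponding integral is bounded.
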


\begin{proof}
Let $\{u_n\}_n \in W^{1,2}(\Omega)$ be a Palais-Smale sequence, i.e. $\sup_n E(u_n)=M<+\infty$ and $\|E'(u_n)\|\to 0$. Then
\begin{align*}
&M + \rho \|E'(u_n)\|\cdot (1+ \|u_n\|^2_{W^{1,2}(\Omega)})\\
&\geq M + \rho \|E'(u_n)\|\cdot \|u_n\|_{W^{1,2}(\Omega)}\\
&\geq E(u_n) - \rho E'(u_n)(u_n) \\
&=\int_\Omega \bigg(\frac{1}{2}-\rho\bigg) |\nabla u_n|^2 + \int_\Omega -\rho u_n f(u_n) + F(u_n) \\
&= \bigg(\frac{1}{2}-\rho\bigg)\|u_n\|^2_{W^{1,2}(\Omega)}+ \int_\Omega -\rho u_n f(u_n) + F(u_n) - \bigg(\frac{1}{2}-\rho\bigg) u_n^2\\
&\geq\bigg(\frac{1}{2}-\rho\bigg)\|u_n\|^2_{W^{1,2}(\Omega)}+ \int_\Omega -\rho u_n f(u_n) + F(u_n) -  u_n^2 \\
&\geq\bigg(\frac{1}{2}-\rho\bigg)\|u_n\|^2_{W^{1,2}(\Omega)}+ \int_{\{u_n\leq R_0\}} -\rho u_n f(u_n) + F(u_n) -  u_n^2.
\end{align*}
Since the second term of the last line is bounded and $\|E'(u_n)\|\to 0$, it follows that the sequence $u_n$ is bounded in $W^{1,2}(\Omega)$.

Using the Rellich-Kondrachov's compactness theorem, we can pass to a subsequence which is convergent in $L^q$, for $q=2$ and $q=p$, and weakly convergent in $W^{1,2}(\Omega)$, to a limit function $u$. Finally, we have that the last line in 
\begin{align*}
\int_{\Omega} |\nabla (u_n-u)|^2 &=\int_{\Omega} \nabla u_n\nabla (u_n-u) - \nabla u\nabla u_n+|\nabla u|^2 \\
&= 2E'(u_n)(u_n-u)-2\int_\Omega f(u_n)(u_n-u)- \nabla u\nabla u_n+|\nabla u|^2
\end{align*} 
 goes to zero. For the first term, we use that $E'(u_n)$ goes to zero and $u_n-u$ is bounded in $W^{1,2}(\Omega)$. For the second term, we can use Holder's inequality $$\bigg|\int_\Omega f(u_n)(u_n-u)\bigg|\leq \|f(u_n) \|_{L^{\frac{p}{p-1}}(\Omega)} \|u_n-u \|_{L^{p}(\Omega)}.$$  Since $|f(s)|$ is bounded by $C(|u|^{p-1}+1)$, the term $\|f(u_n) \|_{L^{\frac{p}{p-1}}(\Omega)}$ is bounded, while $\|u_n-u \|_{L^{p}(\Omega)}$ goes to zero. Finally, the strong convergence in $L^{2}(\Omega)$ and the weak convergence in $W^{1,2}(\Omega)$, imply that the last two terms cancel in the limit. \end{proof}

The following is an immediate consequence of the lemma above: 
 
\begin{corollary}\label{a3compact}
Under hypothesis (A3) and (D), the space of solutions to equation \eqref{eq1} with energy bounded from above, is compact.
\end{corollary}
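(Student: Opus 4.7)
The plan is to reduce the statement to a direct application of Lemma \ref{lem_ps}. Let $\{u_n\}_n \subset W^{1,2}(\Omega)$ be any sequence of solutions of \eqref{eq1} with $\sup_n E(u_n) \leq M < \infty$. Since each $u_n$ is a critical point of $E$, we have $E'(u_n) = 0$ for all $n$, and in particular $\|E'(u_n)\| \to 0$. Thus $\{u_n\}_n$ is a Palais--Smale sequence at level at most $M$.

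By Lemma \ref{lem_ps}, which requires hypothesis (A3), we may pass to a subsequence converging strongly in $W^{1,2}(\Omega)$ to some limit $u$. To finish, I would check that $u$ is itself a solution. Since $E' \colon W^{1,2}(\Omega) \to W^{1,2}(\Omega)^*$ is continuous under (A3) (the subcritical growth $|f(t)| \leq C(1+|t|^{p-1})$ combined with the Sobolev embedding $W^{1,2}\hookrightarrow L^p$ ensures continuity of the Nemytskii operator $u \mapsto f(u)$ from $W^{1,2}$ into $L^{p/(p-1)}$), strong $W^{1,2}$-convergence $u_n \to u$ combined with $E'(u_n) = 0$ yields $E'(u) = 0$. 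Standard elliptic regularity then upgrades $u$ to a classical solution.

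I do not foresee any substantial obstacle: the only real content is the Palais--Smale condition already established in Lemma \ref{lem_ps}, and the continuity of $E'$ which follows from subcritical growth via Sobolev embedding. The one minor point to be careful about is that the hypothesis only bounds the energy from above, not from below; this is harmless because Lemma \ref{lem_ps} merely requires $\sup_n E(u_n) < \infty$ (combined with $\|E'(u_n)\| \to 0$), which is exactly what we have.
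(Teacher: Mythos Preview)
Your proposal is correct and follows exactly the paper's approach: the paper states the corollary as an immediate consequence of Lemma~\ref{lem_ps}, and your argument spells out precisely why a sequence of solutions with bounded energy is a Palais--Smale sequence. The extra care you take in verifying that the limit is again a solution (via continuity of $E'$ under the subcritical growth assumption) is a reasonable addition that the paper leaves implicit.
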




\section{The mountain pass characterization}

In this section we provide the proofs of \cref{thmgs1} and \cref{thmgs2}.

\subsection{Preliminary results and some technical lemmas}\label{sec:pre}

In this section we collect definitions and results that will be useful in the forthcoming sections.  

\

\begin{theorem}[\cite{EuclideanStable,Farina}]\label{stableconst}
Under hypothesis $(D)$, stable solutions to equation \eqref{eq1} are constant functions.
\end{theorem}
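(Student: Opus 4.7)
My plan is to combine the stability quadratic form with Bochner's formula applied to $|\nabla u|^2$. Using $\Delta u = f(u)$ and hence $\nabla\Delta u = f'(u)\nabla u$, Bochner yields
\begin{equation*}
\tfrac12\,\Delta|\nabla u|^2 = |\operatorname{Hess} u|^2 + f'(u)|\nabla u|^2 + \operatorname{Ric}(\nabla u,\nabla u).
\end{equation*}
Combined with Kato's inequality $|\operatorname{Hess} u|^2\geq |\nabla|\nabla u||^2$, integrating over $\Omega$ (after the standard regularization replacing $|\nabla u|$ by $\sqrt{|\nabla u|^2+\delta}$ to handle the critical set, then letting $\delta\downarrow 0$) gives
\begin{equation*}
\int_{\partial\Omega}|\nabla u|\,\partial_\nu|\nabla u| \;\geq\; \int_\Omega\bigl(|\nabla|\nabla u||^2 + f'(u)|\nabla u|^2\bigr) + \int_\Omega \operatorname{Ric}(\nabla u,\nabla u).
\end{equation*}

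Since $u$ is smooth by elliptic regularity, $\phi=|\nabla u|\in W^{1,2}(\Omega)$ is admissible in the stability form $Q(\phi)=\int_\Omega|\nabla\phi|^2+f'(u)\phi^2\geq 0$, so the first integral on the right is non-negative, and together with $\operatorname{Ric}\geq 0$ this yields $\int_{\partial\Omega}|\nabla u|\partial_\nu|\nabla u|\geq \int_\Omega\operatorname{Ric}(\nabla u,\nabla u)\geq 0$. The Neumann condition makes $\nabla u$ tangent to $\partial\Omega$, and a short computation (exactly like the one for $\partial_\nu P$ in the Neumann case of the earlier lemma) gives $\partial_\nu|\nabla u|^2 = -2\,II_\nu(\nabla u,\nabla u)$. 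Under (D1), strict convexity makes this boundary integrand $\leq 0$; under (D2) the boundary is absent. In either case the chain collapses to equalities, forcing $\operatorname{Ric}(\nabla u,\nabla u)\equiv 0$ on $\Omega$, $Q(|\nabla u|)=0$, equality in Kato's inequality, and in case (D1) also $II_\nu(\nabla u,\nabla u)\equiv 0$ on $\partial\Omega$, which by strict convexity gives $\nabla u\equiv 0$ on $\partial\Omega$.

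To conclude, in case (D2) the hypothesis $\operatorname{Ric}>0$ at some $p\in\Omega$, combined with $\operatorname{Ric}(\nabla u,\nabla u)\equiv 0$ and continuity, forces $\nabla u\equiv 0$ on an open neighborhood of $p$; unique continuation (Aronszajn) for $\Delta u = f(u)$ then propagates $u\equiv u(p)$ to all of $\Omega$. In case (D1), $\nabla u\equiv 0$ on $\partial\Omega$ together with the Neumann condition gives vanishing Cauchy data for $u-c$ on $\partial\Omega$, where $c=u|_{\partial\Omega}$, and unique continuation from the boundary again yields $u\equiv c$. The main obstacle I expect is justifying the integration by parts rigorously through the critical set $\{\nabla u=0\}$ where $|\nabla u|$ is only Lipschitz; the $\delta$-regularization is standard but the error terms require careful control, which is the bulk of the technical work in \cite{EuclideanStable, Farina}.
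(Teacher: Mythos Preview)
The paper does not give its own proof of this theorem; it is simply cited from \cite{EuclideanStable,Farina}. Your argument follows the classical strategy of those references---integrate Bochner's formula, plug $\phi=|\nabla u|$ into the stability inequality, and control the boundary term via convexity---and the computation is correct through the point where you force all inequalities to be equalities. Your conclusion in case (D2) is also correct: once $\nabla u$ vanishes on an open set you have $f(u(p))=\Delta u(p)=0$ there, so the constant $u(p)$ is a genuine solution and Aronszajn unique continuation applies.

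There is, however, a gap in case (D1). You apply unique continuation from the boundary to $w=u-c$, but $w$ satisfies $\Delta w = f(u)$, which is \emph{not} homogeneous unless $f(c)=0$, and nothing you have derived forces this. (Indeed, from $\nabla u\equiv 0$ along $\partial\Omega$ one computes in Fermi coordinates that $\Delta u = u_{tt}=f(c)$ on $\partial\Omega$, which may well be nonzero; so $u-c$ does not vanish to second order and there is no homogeneous equation to which Holmgren--Calder\'on uniqueness applies.) The standard fix uses the other equality you already have: from $Q(|\nabla u|)=0$ and $Q\ge 0$ the function $|\nabla u|$ lies in the kernel of the bilinear form, hence is a weak Neumann solution of the \emph{linear} equation $-\Delta\phi + f'(u)\phi=0$. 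Since this operator has a simple lowest Neumann eigenvalue with strictly positive eigenfunction, any nontrivial nonnegative element of the kernel would have to be strictly positive on $\overline{\Omega}$, contradicting $|\nabla u|=0$ on $\partial\Omega$. Hence $|\nabla u|\equiv 0$. Equivalently, apply unique continuation from the boundary to $|\nabla u|$ itself (rather than to $u-c$): it satisfies a homogeneous linear equation and has zero Cauchy data, so it vanishes identically.
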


\

\begin{lemma}\label{stable2}
Assume $\Omega$ satisfies $(D)$. If $F$ is a Morse function, then the following are equivalents:
\begin{enumerate}
\item [i)] $u$ is a non-degenerate local minimum of $E$.
\item [ii)] $u$ is a solution of \eqref{eq1} with Morse index 0 (i.e. stable).
\item [iii)] $u$ is constant equal to a local minimum of $F$.
\end{enumerate} 
\end{lemma}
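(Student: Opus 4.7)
The plan is to establish the circular chain of implications $(\text{i}) \Rightarrow (\text{ii}) \Rightarrow (\text{iii}) \Rightarrow (\text{i})$, leveraging Theorem \ref{stableconst} for the most substantial step.

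\textbf{Step 1: $(\text{i}) \Rightarrow (\text{ii})$.} If $u$ is any local minimum of $E$, it is in particular a critical point, hence a weak solution of \eqref{eq1} (and smooth by standard elliptic regularity). Being a local minimum, the Hessian $E''(u)$ must be non-negative on $W^{1,2}(\Omega)$, since any direction $\phi$ with $E''(u)(\phi,\phi)<0$ would produce a path along which $E$ decreases. Hence the Morse index is zero.

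\textbf{Step 2: $(\text{ii}) \Rightarrow (\text{iii})$.} By Theorem \ref{stableconst}, any stable solution under hypothesis (D) is a constant, so $u \equiv c$ for some $c \in \R$. From $\Delta u = f(u)$ we immediately get $f(c)=0$. The second variation is
\[
E''(u)(\phi,\phi) = \int_\Omega |\nabla \phi|^2 + f'(c)\phi^2.
\]
Testing with the constant function $\phi \equiv 1$ yields $f'(c)\,|\Omega| \geq 0$, so $f'(c) \geq 0$. Since $F$ is Morse, $f'(c) \neq 0$, and therefore $f'(c)>0$, which identifies $c$ as a (non-degenerate) local minimum of $F$.

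\textbf{Step 3: $(\text{iii}) \Rightarrow (\text{i})$.} Let $u \equiv c$ with $c$ a local minimum of $F$, so $f(c)=0$ (thus $u$ solves \eqref{eq1}) and $f'(c)>0$ by the Morse hypothesis. The Hessian
\[
Q(\phi) := E''(u)(\phi,\phi) = \int_\Omega |\nabla \phi|^2 + f'(c)\phi^2
\]
is then positive definite and coercive on $W^{1,2}(\Omega)$: with $\alpha := \min(1,f'(c))>0$, one has $Q(\phi) \geq \alpha \|\phi\|_{W^{1,2}(\Omega)}^2$. This already witnesses non-degeneracy. To upgrade to a local minimum, write for $\phi \in W^{1,2}(\Omega)$,
\[
E(u+\phi) - E(u) = \tfrac{1}{2}Q(\phi) + \int_\Omega R(\phi),
\]
where $R(t) = F(c+t) - F(c) - f(c)t - \tfrac{1}{2}f'(c)t^2$. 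Since $F$ is smooth near $c$, we have $|R(t)| \leq \omega(|t|)\,t^2$ with $\omega(r)\to 0$ as $r\to 0$. Restricting to $\phi$ with $\|\phi\|_{W^{1,2}(\Omega)} \leq \delta$ small, Sobolev embedding provides an $L^{2^*}$-bound, and a standard truncation/interpolation argument shows $\int_\Omega |R(\phi)| = o(\|\phi\|_{W^{1,2}(\Omega)}^2)$, so that the quadratic term dominates and $E(u+\phi)>E(u)$ for $\phi \neq 0$ with $\|\phi\|_{W^{1,2}}$ small enough.

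\textbf{Anticipated obstacle.} The only nontrivial point is controlling the remainder $R(\phi)$ in Step 3 when $F$ may have wild behaviour at infinity under assumptions (A2) or (A3). The resolution is that only values of $u+\phi$ near $c$ contribute significantly: on the set where $|\phi|$ is large, one uses a crude upper bound for $F$ together with the Sobolev embedding to show that the $L^2$-measure of that set is $o(\|\phi\|_{W^{1,2}}^2)$, while on the set where $|\phi|$ is small, the Taylor estimate for $F$ near $c$ applies directly. This is routine once set up carefully, and the heavy lifting of the equivalence is carried out by Theorem \ref{stableconst}.
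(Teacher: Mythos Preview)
Your argument is correct and follows essentially the same cycle $(\text{i}) \Rightarrow (\text{ii}) \Rightarrow (\text{iii}) \Rightarrow (\text{i})$ as the paper, with the same use of Theorem~\ref{stableconst} and the same test function $\phi\equiv 1$ in Step~2. The only difference is in Step~3: the paper simply records the inequality $E''(u)(v,v)=\int_\Omega|\nabla v|^2+F''(c)v^2\geq F''(c)\|v\|_{L^2}^2>0$ and stops there, treating coercivity of the Hessian as synonymous with ``non-degenerate local minimum,'' whereas you additionally attempt a remainder estimate to upgrade positive-definiteness to an actual local minimum in $W^{1,2}$. Your extra work is not wrong, but it is also not needed at the level of detail the paper operates at, and your ``anticipated obstacle'' discussion is more elaborate than the situation demands.
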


\begin{proof}
i) $\implies$ ii) is basic calculus of variations. To see ii) $\implies$ iii), notice that by Theorem \ref{stableconst} $u$ must be a constant function. Then, $0=\Delta u=f(u)=F'(u)$, so $u$ is a critical point of $F$. Since $F$ is a Morse function, $u$ is either a local minimum or a local maximum. When $u$ is a local maximum then $F''(u)<0$, and the linearization of \eqref{eq1}, i.e. $-\Delta + F''(u)$, has $\phi\equiv1$ as an eigenfunction with negative eigenvalue $F''(u)$. Therefore, $u$ must be a local minimum, i.e. $F''(u)>0$. Finally, iii) $\implies$ i) follows from the  $E''(u)(v,v)=\int_{\Omega}|\nabla v|^2+F''(c)v^2\geq \int_{\Omega}|F''(c)v^2>0$, for all $v\neq 0$.
\end{proof}

\

The following is the classical Mountain Pass Theorem (see \cite{Ghoussoub}):

\begin{theorem}[Mountain pass]\label{mountainpass} Let $E:W^{1,2}(\Omega)\to \R$ be a $C^2$ energy functional given by $$E(u)=\int_\Omega\frac{|\nabla u|^2}{2}+F(u)$$ and $u^-,u^+\in W^{1,2}(\Omega)$ such that we have the following inequality  \begin{equation}\label{mpgeometry}\inf_{h\in \Gamma} \sup_{t\in [-1,1]} E(h(t))=E_0>E(u^\pm)\end{equation} where $\Gamma=\{h\in C([-1,1],W^{1,2}(\Omega)): h(\pm1)\equiv u^{\pm}\}.$

\

Assume there exists a sequence $\{h_n\}_{n\in\N}\subset \Gamma$, with $$E_0=\lim_{n\to\infty} \sup_{t\in[-1,1]} E(h_n(t))$$ and such that $E$ satisfies the Palais-Smale condition along $\{h_n\}_{n\in\N}$ (see Definition \ref{psalong} below). \\

Then, there exists $u \in W^{1,2}(\Omega)$ such that:
\begin{enumerate}
\item [a)] $u$ is a critical point of $E$,
\item [b)] $u=\lim_{n\to\infty}h_n(t_n)$, for some sequence $\{t_n\}_{n\in\N}\subset[-1,1]$.
\item [c)] $E(u)=E_0,$
\item [d)] $u$ has Morse index at most 1. Moreover, if $E$ does not admit degenerate critical points of Morse index 0, then $u$ must have Morse index 1. 
\end{enumerate}
\end{theorem}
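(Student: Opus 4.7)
The plan is to derive parts (a)--(c) from the classical deformation-lemma argument applied directly to the min-max sequence $\{h_n\}$, and then upgrade to the Morse index control in (d) via a second-order refinement of Hofer and Ghoussoub--Preiss. Throughout, one works in the Hilbert space $W^{1,2}(\Omega)$ and uses that $E$ is $C^2$ with locally Lipschitz derivative, so that a Lipschitz pseudo-gradient vector field is available on $W^{1,2}(\Omega)\setminus \crit(E)$.

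To obtain (a), (b), (c), I would first show that after passing to a subsequence there exist $t_n\in[-1,1]$ with $E(h_n(t_n))\to E_0$ and $\|E'(h_n(t_n))\|\to 0$. Suppose not, so that for some $\eta,\delta>0$ every $t\in[-1,1]$ with $E(h_n(t))\geq E_0-\delta$ satisfies $\|E'(h_n(t))\|\geq \eta$ along a subsequence. Using the pseudo-gradient with a cutoff supported where $E\geq E_0-\delta$, and noting that $E(u^\pm)<E_0-\delta$ for $\delta$ small, one builds a continuous deformation $\Phi_s$ of $W^{1,2}(\Omega)$ fixing $u^\pm$ and strictly lowering $E$ on the superlevel set; then $\Phi_s\circ h_n\in\Gamma$ has $\max_t E(\Phi_s\circ h_n(t))\leq E_0-\delta/2$, contradicting $E_0=\inf_\Gamma \max E$ applied along the very sequence $\{h_n\}$. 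The required $t_n$ therefore exist, and the Palais--Smale condition along $\{h_n\}$ yields a subsequence with $h_n(t_n)\to u$ in $W^{1,2}(\Omega)$. The limit satisfies $E'(u)=0$ and $E(u)=E_0$.

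For (d), the Morse index bound comes from a second-order variant of the same deformation. The guiding principle is that if $E''(v)$ has two linearly independent negative directions at a near-critical point $v=h_n(t)$, then one can deform the path inside the two-dimensional negative subspace so as to strictly decrease $\max_t E(h_n(t))$, because the tangent direction $h_n'(t)$ only occupies one dimension. Incorporating Hessian information into the pseudo-gradient (as in Ghoussoub--Preiss) produces, along with the $t_n$ above, the additional estimate that $E''(h_n(t_n))$ has at most one negative eigenvalue up to an error $\e_n\to 0$; since $E''(w)=\mathrm{Id}+K(w)$ with $K(w)$ compact (the Sobolev embedding makes multiplication by $F''(w)$ a compact operator on $W^{1,2}(\Omega)$) and $E''(h_n(t_n))\to E''(u)$ in operator norm along the convergent subsequence, the limiting Hessian has at most one negative eigenvalue, i.e.\ $\Ind(u)\leq 1$. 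If moreover $E$ has no degenerate critical points of Morse index $0$, then $\Ind(u)=0$ would make $u$ a non-degenerate strict local minimum; a routine local perturbation of $h_n$ in a small neighborhood of $t_n$, flowing through $u$ along the pseudo-gradient in the two half-paths, would then strictly decrease $\max_t E(h_n(t))$ and again contradict the definition of $E_0$. This forces $\Ind(u)=1$. I expect the main difficulty to lie precisely in the second-order refinement: parts (a)--(c) are completely standard, whereas the Morse index statement requires the delicate simultaneous control of first and second variations provided by the Ghoussoub--Preiss theory, and the verification that the deforming vector fields can be chosen continuously in a neighborhood of a critical point whose Hessian may be degenerate.
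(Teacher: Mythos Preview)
The paper does not supply its own proof of this theorem: it is stated as ``the classical Mountain Pass Theorem (see \cite{Ghoussoub})'' and used as a black box. So there is nothing in the paper to compare your argument against; your sketch is essentially an outline of the proof one finds in Ghoussoub's monograph, and parts (a)--(c) together with the $\Ind(u)\le 1$ statement via second-order min-max are indeed handled there along the lines you indicate.

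One point in your sketch deserves correction. In the last step, to rule out $\Ind(u)=0$ under the non-degeneracy hypothesis, you propose to ``flow through $u$ along the pseudo-gradient in the two half-paths'' to strictly decrease $\max_t E(h_n(t))$. This does not work: if $u$ is a non-degenerate local minimum with $E(u)=E_0$, the negative pseudo-gradient flow near $u$ drives points \emph{toward} $u$, so the maximum of the deformed path stays at $E_0$, not below it. The correct (and equally standard) argument is topological rather than dynamical: non-degeneracy gives $r,\epsilon>0$ with $E\ge E_0+\epsilon$ on the sphere $\{\|v-u\|=r\}$; since $h_n(t_n)\to u$ lies inside this ball for large $n$ while $h_n(\pm1)=u^\pm$ lie outside (because $E(u^\pm)<E_0$), the path must cross the sphere, forcing $\sup_t E(h_n(t))\ge E_0+\epsilon$ and contradicting $\sup_t E(h_n(t))\to E_0$. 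With this fix your outline matches the classical proof the paper is citing.
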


\begin{definition}\label{psalong}
Under the hypothesis of the theorem above, we say that $E$ satisfies the Palais-Smale condition along $\{h_n\}_{n\in\N}$ if any sequence $\{u_n\}_{n\in\N} \subset W^{1,2}(\Omega)$ such that:
\begin{enumerate}
\item $u_n\in h_n([-1,1])$,
\item $\lim_{n\to\infty}E(u_n)=E_0$ and
\item $\lim_{n\to\infty}\|E'(u_n)\|=0,$
\end{enumerate} has a convergent subsequence.
\end{definition}

\begin{remark}\label{remarkmp}
 The following are standard situations in which Theorem \ref{mountainpass} can be applied:

\begin{enumerate}
\item If $u^-$ and $u^+$ are both strict local minima of $E$ in $W^{1,2}(\Omega)$, then inequality \eqref{mpgeometry} holds.
\item If $u^-$ is a strict local minima of $E$ in $W^{1,2}(\Omega)$ and there exists a ball $B(u^-,\delta)$, for some $\delta>0$, such that $\inf_{W^{1,2}(\Omega)\setminus B(u^-,\delta)} E\leq E(u^-)$, then there exists a $u^+ \in W^{1,2}(\Omega)\setminus B(u^-,\delta)$ for which inequality \eqref{mpgeometry} holds. 
\end{enumerate}
\end{remark}

\begin{lemma}[Deformation lemma]\label{morse}
Let $E$, $u^-$, $u^+$ and $E_0$ as in Theorem \ref{mountainpass}. Assume that $E$ does not admit degenerate critical points of index 0, and $h$ is a family joining $u^-$ with $u^+$ which is optimal at $u\in W^{1,2}(\Omega)$ with respect to $E$ (see Definition \ref{defoptimal}). If $E(u)=E_0$, then $u$ is a critical point of $E$ with Morse index equal to 1.  
\end{lemma}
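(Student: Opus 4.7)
The plan is to establish the result in two steps---criticality of $u$ and Morse index equal to $1$---both via contradiction by deforming the family $h$ to produce a path in $\Gamma$ whose sup lies strictly below $E_0$.

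\textbf{Step 1: $u$ is critical.} I would suppose $E'(u) \neq 0$. A standard pseudo-gradient argument then produces a continuous map $\eta : W^{1,2}(\Omega) \to W^{1,2}(\Omega)$, supported in a small ball $B(u,\delta)$, with $E \circ \eta \leq E$ everywhere and $E(\eta(v)) \leq E(v) - \epsilon$ on $B(u,\delta/2)$ for some $\epsilon > 0$. By condition (b) of optimality, $E(h(t)) < E(u) = E_0$ for $t \neq 0$; combined with continuity of $E \circ h$ and compactness of $\{t : h(t) \notin B(u,\delta/2)\}$, this yields $E(h(t)) \leq E_0 - \tau$ there for some $\tau > 0$. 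The composition $\tilde h := \eta \circ h$ therefore belongs to $\Gamma$ (since $\eta$ fixes points away from $u$ and in particular $h(\pm 1) = u^{\pm}$) and satisfies $\sup_t E(\tilde h(t)) \leq E_0 - \min(\epsilon, \tau) < E_0$, contradicting the mountain pass value.

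\textbf{Step 2: Morse index equal to $1$.} For the lower bound, condition (b) gives $h(t) \to u$ with $E(h(t)) < E(u)$ for small $t \neq 0$, so $u$ is not a local minimum of $E$; combined with the hypothesis ruling out degenerate critical points of Morse index $0$, this forces Morse index at least $1$. For the upper bound, I would argue by contradiction: if the Morse index were at least $2$, then $E''(u)$ would be negative definite on some $2$-dimensional subspace $V^- \subset W^{1,2}(\Omega)$. Taylor expansion shows that for $r$ small, the punctured disk $D_r \setminus \{u\} = \{u + v : v \in V^-,\ 0 < \|v\| \leq r\}$ lies in $\{E < E(u)\}$, and since $\dim V^- \geq 2$ this set is path-connected. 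A local surgery replacing $h|_{[-\delta_0, \delta_0]}$ by a continuous path that first descends into $D_r \setminus \{u\}$, traverses it while avoiding $u$, and then joins back to $h(\delta_0)$ then yields $\tilde h \in \Gamma$ with $\sup_t E(\tilde h(t)) < E_0$, again contradicting the mountain pass value.

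\textbf{Main obstacle.} The technical crux is the bypass surgery of Step 2, particularly because $u$ may be a degenerate critical point of index $\geq 1$, preventing the direct use of a Morse lemma. Constructing the bypass path continuously and verifying that it remains in $\{E < E(u)\}$ while gluing correctly with $h|_{[-1,-\delta_0]}$ and $h|_{[\delta_0,1]}$ requires careful use of Taylor expansion, the negative definiteness of $E''(u)$ on $V^-$, and the quadratic descent provided by condition (d).
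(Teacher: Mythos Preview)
Your Step 1 and the lower bound in Step 2 match the paper. The gap is in the upper bound of Step 2: you never explain how the surgery path gets from $h(-\delta_0)$ into the punctured disk $D_r\setminus\{u\}\subset u+V^-$ (and back out to $h(\delta_0)$) while remaining in $\{E<E_0\}$. The endpoints $h(\pm\delta_0)$ are close to $u$ but generally do \emph{not} lie in $u+V^-$: condition (d) only gives $E''(u)(h'(0),h'(0))<0$, not that $h'(0)$ lies in the negative eigenspace, so you cannot simply arrange $h'(0)\in V^-$. A straight segment from $h(-\delta_0)$ to a point of the disk may well cross $\{E\geq E_0\}$. Your ``Main obstacle'' paragraph flags exactly this gluing issue but does not resolve it; the worry about degeneracy of $u$ in the full space turns out to be avoidable.

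The paper sidesteps the gluing problem by building the bypass \emph{inside a two-parameter extension of $h$ itself}. It sets $\gamma(s,t)=h(t)+sv$ for a single vector $v$ and studies $f(s,t)=E(\gamma(s,t))$. At $(0,0)$ one has $\partial_{tt}f=E''(u)(h'(0),h'(0))<0$ by (d), $\partial_{ss}f=E''(u)(v,v)<0$ since $v$ is taken in the negative eigenspace, and the cross term $\partial_{st}f=E''(u)(h'(0),v)$. The key choice---available precisely because the index is assumed $\geq 2$---is to take $v=\alpha_1 v_1+\alpha_2 v_2$ in the span of the first two Neumann eigenfunctions of $-\Delta+F''(u)$, with $(\alpha_1,\alpha_2)$ solving the single linear equation $E''(u)(h'(0),v)=0$. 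The Hessian of $f$ at $(0,0)$ is then diagonal negative definite, so $(0,0)$ is a non-degenerate local maximum of the \emph{two-dimensional} function $f$, and a small arc in the $(s,t)$-plane from $(0,-\delta)$ to $(0,\delta)$ around the origin produces a path with $f<E_0$ that is already attached to $h$ at both ends. No infinite-dimensional Morse lemma is needed; the problem is reduced to a $2\times 2$ Hessian made non-degenerate by the choice of $v$.
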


\begin{proof}

To see that $u$ must be a critical point one uses the classical deformation argument from the mountain-pass theorem. It consists on pushing the path $\gamma$ in the direction of a $v \in W^{1,2}(\Omega)$ such that $E'(u)(v)<0$. We omit this part since it is standard, and the deformation argument we use later on this proof to show that the Morse index cannot be greater than 1, uses a similar construction.

We prove now that the Morse index of $u$ cannot be 0. Otherwise, $u$ would be a strict local minima of $E$ in $W^{1,2}(\Omega)$, since $E$ does not admit degenerate critical points of index 0. In particular, for $\delta>0$ small enough $E(h(\delta))>E(h(0))=E(u)$, which contradicts the assumption that $h$ is optimal at $u$.

To reach a contradiction, assume now that the Morse index of $u$ is greater or equal than 2. In this case, we can construct a competitor family on $\Gamma$ below the level $E_0$ in the following way. Let $v_1$ and $v_2$ the first two eigenfunctions of the linearization of equation \eqref{eq1}. By the Morse index assumption, we have $E''(u)(v,v)<0$, for all $v\in \operatorname{span}(v_1,v_2)$. Let $v=\alpha_1v_1+\alpha_2v_2$, where $\alpha_1,\alpha_2\in \R$, will be chosen later. Define $\gamma:\R \times [-1,1] \to W^{1,2}(\Omega)$ by \[\gamma(s,t):=h(t) + sv, \quad (s,t) \in \R \times [-1,1].\] To finish the lemma, we just have to show that $f(s,t)=E(\gamma(s,t))$ has a non-degenerate local maximum at $(s,t)=(0,0)$. If this is the case, then substituting a piece of $h$ by a path going around $(0,0)$ would do the work. 

To see that $(0,0)$ is a non-degenerate local maximum, notice that since $\gamma(0,0)=h(0)=u$, the point $(0,0)$ is a critical point of $E$ and, as a consequence, it is also a critical point of $f$. Therefore, it is enough to show that, for the right choices of $\alpha_1$ and $\alpha_2$, the Hessian of $f$ is negative definite at $(0,0)$. This is a simple computation:
\begin{align*}
\partial_{tt}^2 f(0,0)&=E''(u)(h'(0),h'(0))<0, \text{ since $h$ is optimal at $u$}.\\
\partial_{ss}^2 f(0,0)&=E''(u)(v,v)<0, \text{ since $v\in V$, and}\\ 
\partial_{st}^2 f(0,0)&= E''(u)(h'(0),v) \\
&=\int_{\Omega} \nabla h'(0) \nabla v + F''(u)h'(0)v\\
&=\int_{\Omega} h'(0)[-\Delta v + F''(u)v] + \int_{\partial \Omega}h'(0)\partial_\nu v\\
&=\int_{\Omega} h'(0)(\alpha_1\lambda_1 v_1 + \alpha_2\lambda_2 v_2) \\
&=0,
\end{align*}
where we are using that $\partial_\nu v=0$, since $v_1$ and $v_2$ are Neumann eigenvalues and we are selecting $\alpha_1$ and $\alpha_2$ so that the last term is equal to 0. 

\end{proof}

We also recall the following standard result for bounded semilinear parabolic heat flows (see \cite{Cazenave}):
\begin{theorem}[Parabolic flow]\label{flow}
Let $u^{\pm}_t : [0,+\infty) \to W^{1,2}(\Omega)$ be solutions to the parabolic equation \begin{align}\label{parabolic}\begin{cases}\partial_t u_t -\Delta u_t + f(u_t)=0  & \text{ on } [0,+\infty)\times\Omega \\ 
\partial_\nu u =0 & \text{ on } [0,+\infty)\times\partial\Omega. \end{cases}\end{align} Assume that $u_t^-<u_t^+$, for all $t\geq 0$ and $\sup_{t\geq 0}\|u^{\pm}\|_{L^\infty(\Omega)}<+\infty$. 

Then, given any $v_0 \in W^{1,2}(\Omega)$, with $u^-_0 < v_0 < u^+_0$, there exists a unique solution $v_t$ of \eqref{parabolic} with initial condition $v_0$ which exists for all $t\geq 0$ and such that: 
\begin{enumerate}
\item [a)] $u^-_t < v_t < u^+_t$, for all $t\geq 0$.
\item [b)] $E(v_t)$ is strictly decreasing on $t\geq 0$ (unless $v_t$ is a solution to the stationary equation $\eqref{eq1}$, in which case $v_t$ is constant). 
\item [c)] There exists $v_\infty \in W^{1,2}(\Omega)$ a solution to the stationary equation \eqref{eq1}, and sequence of times $t_k\to\infty$, such that $v_{t_k}\to v_\infty$, strongly in $W^{1,2}(\Omega)$.
\end{enumerate}
\end{theorem}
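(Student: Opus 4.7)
The plan is to proceed in four standard stages, all following the general machinery of semilinear parabolic theory as developed in \cite{Cazenave}. First, I would establish short-time existence and uniqueness of $v_t$ on some maximal interval $[0, T_{\max})$ by recasting the equation in mild form via the Neumann heat semigroup $e^{t\Delta_N}$ and applying a contraction-mapping argument using local Lipschitz continuity of $f$. Second, the comparison statement (a) comes from the parabolic maximum principle applied to $w_t := v_t - u_t^-$ and $\tilde w_t := u_t^+ - v_t$. A direct subtraction shows that each satisfies a linear equation of the form
\[\partial_t w - \Delta w + c(x,t)\, w = 0, \qquad \partial_\nu w = 0 \text{ on } \partial\Omega,\]
with $c(x,t) = \int_0^1 f'(s v_t + (1-s) u_t^-)\, ds$ bounded on $[0,T]\times\Omega$ for any $T < T_{\max}$. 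Since $w_0 > 0$, the strong maximum principle for Neumann problems preserves strict positivity for all $t < T_{\max}$; the analogous argument handles $\tilde w_t$. This yields a uniform $L^\infty$ bound on $v_t$, which precludes blow-up and extends the solution to all $t \geq 0$.

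For (b), I would exploit the gradient-flow structure: multiplying the parabolic equation by $\partial_t v_t$, integrating by parts using the Neumann condition, and rearranging gives the energy identity
\[\frac{d}{dt} E(v_t) = -\int_\Omega |\partial_t v_t|^2 \, dx \leq 0,\]
with equality at time $t_0$ if and only if $\partial_t v_{t_0} \equiv 0$, which is equivalent to $v_{t_0}$ being a solution of \eqref{eq1}; by uniqueness of the flow this means $v_t$ is constant in $t$ from then on. For (c), the uniform $L^\infty$ bound from (a) makes $F(v_t)$ uniformly bounded and hence $E(v_t)$ bounded below, so integrating the energy identity yields $\int_0^\infty \|\partial_t v_t\|_{L^2}^2 \, dt < \infty$. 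Pick $t_k \to \infty$ with $\|\partial_t v_{t_k}\|_{L^2} \to 0$. Parabolic Schauder estimates on intervals $[t_k - 1, t_k + 1]$, applicable thanks to the uniform $L^\infty$ bound and the smoothness of $f$, provide uniform $C^{2,\alpha}$ bounds on $v_{t_k}$; along a subsequence $v_{t_k} \to v_\infty$ in $W^{1,2}(\Omega)$ (indeed in $C^2$), and passing to the limit in $\Delta v_{t_k} = f(v_{t_k}) + \partial_t v_{t_k}$ gives $\Delta v_\infty = f(v_\infty)$, completing (c).

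The main technical point is the compactness needed for (c): upgrading convergence from weak to strong in $W^{1,2}$ requires the parabolic regularity bootstrap, which in turn rests crucially on the uniform $L^\infty$ bound established in step two. A secondary subtlety is ensuring the strict inequalities in (a) are preserved rather than degenerating to $\leq$; this is handled by invoking the strong (rather than weak) maximum principle for the Neumann problem. Everything else is routine gradient-flow theory.
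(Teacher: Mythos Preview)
Your sketch is correct and follows the standard route one finds in references such as \cite{Cazenave}: local well-posedness via the Neumann heat semigroup, the comparison principle to trap $v_t$ between the barriers and obtain a uniform $L^\infty$ bound (hence global existence), the energy identity $\frac{d}{dt}E(v_t)=-\|\partial_t v_t\|_{L^2}^2$ for the monotonicity, and parabolic regularity plus the integrability of $\|\partial_t v_t\|_{L^2}^2$ for the subsequential convergence to a stationary solution.

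Note, however, that the paper does not actually prove this theorem: it is stated as a ``standard result for bounded semilinear parabolic heat flows'' with a reference to \cite{Cazenave} and no further argument. So there is no in-paper proof to compare against; your outline simply supplies the details the paper omits. One small remark on (b): to conclude that $v_t$ is constant for \emph{all} $t\geq 0$ (rather than only for $t\geq t_0$) when $\partial_t v_{t_0}=0$ at some $t_0>0$, you implicitly need backward uniqueness for the parabolic problem, which is also standard but worth flagging.
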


The following lemma will allow us to construct general optimal families for unstable solutions:

\begin{lemma}\label{optimalfam}
Let $u$ be an unstable solution to \eqref{eq1} and $\phi$ the first eigenfunction of the operator $-\Delta+F''(u)$. Assume 
\begin{enumerate}
\item $u<c^+$ (resp. $c^-<u$).
\end{enumerate}
Then, there exists $\delta\in(0,1)$ and a continuous map $h:[0,1]\to W^{1,2}(\Omega)$ (resp. $h:[-1,0]\to W^{1,2}(\Omega)$), such that 
\begin{enumerate}
\item $h(t)=u+t\phi$ for $t\in[0,\delta]$ (resp. $t\in[-\delta,0]$),
\item $\frac{d^2}{dt^2}E(h(t))|_{t=0}<0$,
\item $E(u)=E(h(0))>E(h(t)),$ for all $t\in (0,1]$ (resp. $t\in [-1,0)$),
\item $h(1)$ (resp. $h(-1)$) is a constant which is a stable critical point of $F$.
\end{enumerate}
\end{lemma}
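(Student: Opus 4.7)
I would prove Lemma \ref{optimalfam} in two stages: a short linear segment in the direction of $\phi$, followed by a parabolic flow trajectory ending at a constant local minimum of $F$. I focus on the case $u < c^+$; the case $c^- < u$ follows by reversing signs throughout. Since $u$ is unstable and $F$ is Morse, the first Neumann eigenvalue $\lambda_1$ of the linearization $L := -\Delta + F''(u)$ is strictly negative, and the associated first eigenfunction $\phi$ may be taken strictly positive on $\overline{\Omega}$ by the standard Perron argument for Schr\"odinger operators. Setting $h(t) := u + t\phi$ for $t \in [0,\delta]$ immediately gives (1). Moreover,
$$
\tfrac{d}{dt} E(h(t))\big|_{t=0} = E'(u)(\phi) = 0, \qquad \tfrac{d^2}{dt^2} E(h(t))\big|_{t=0} = E''(u)(\phi,\phi) = \lambda_1 \|\phi\|_{L^2(\Omega)}^2 < 0,
$$
giving (2), and a Taylor expansion for $\delta$ small enough enforces $E(h(t)) < E(u)$ for $t \in (0,\delta]$, which is the portion of (3) covered by the linear segment.

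From the endpoint $v_0 := u + \delta\phi$ I would continue $h$ along the parabolic flow \eqref{parabolic}. The stationary function $u$ is a strict subsolution barrier since $v_0 > u$ pointwise, so by the comparison statement contained in Theorem \ref{flow} the flow keeps $v_s > u$ for all $s \geq 0$. For the upper barrier I take the smallest constant $M \geq \max v_0$ with $f(M) \geq 0$; under (A1) one can simply take $M = c^+$ whenever $\delta$ is small, and more generally $M$ is the first local minimum of $F$ to the right of $\max v_0$. Theorem \ref{flow} then supplies $v_{s_k} \to v_\infty$ in $W^{1,2}(\Omega)$ along some $s_k \to \infty$, with $v_\infty$ a stationary solution satisfying $u \leq v_\infty \leq M$ and $E(v_\infty) \leq E(v_0) < E(u)$, and $E(v_s)$ strictly decreasing along the flow.

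If $v_\infty$ is stable, then Theorem \ref{stableconst} together with Lemma \ref{stable2} force it to be a constant local minimum of $F$; concatenating the linear segment with the flow trajectory and reparametrizing affinely to $[0,1]$ produces a continuous $h$ satisfying (1)--(4). If instead $v_\infty$ is unstable, I would iterate the construction at $v_\infty$: nudge by $\delta' \phi_\infty > 0$ in the direction of its first eigenfunction, flow again, and repeat. Compactness of the set of unstable solutions of energy $\leq E(u)$ (Corollary \ref{a2compact} or Corollary \ref{a3compact}) together with the Morse assumption on $F$ ensures that the critical values reached are isolated, so this iteration terminates after finitely many steps at a constant local minimum of $F$, which becomes $h(1)$.

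The step I expect to be most delicate is producing the upper barrier for the flow under hypothesis (A3), where $F$ may decay to $-\infty$ at infinity and no constant supersolution immediately above $\max v_0$ need exist. In that regime I would replace the pointwise barrier with an energy argument, exploiting the Palais--Smale condition from Lemma \ref{lem_ps} to keep $v_s$ bounded in $W^{1,2}(\Omega)$ along the energy-decreasing flow, and then use elliptic regularity on the limiting subsequence to recover the pointwise control needed for $v_\infty$ to be a classical stationary solution and for the iteration argument above to proceed.
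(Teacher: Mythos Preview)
Your skeleton---linear segment in the direction of $\phi$ followed by the parabolic flow of Theorem~\ref{flow}---is the same as the paper's. Two points, however, need repair.

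First, the upper barrier. The hypothesis of the lemma is precisely $u<c^+$, and $c^+$ is by definition a critical point of $F$, hence a stationary solution of \eqref{parabolic}. For $\delta$ small you have $u<u+\delta\phi<c^+$, so $c^+$ is always an admissible upper barrier in Theorem~\ref{flow}, regardless of which of (A1)--(A3) is in force. Your search for ``the first local minimum of $F$ to the right of $\max v_0$'' is unnecessary (and may not exist if $c^+$ is a local maximum), and your final paragraph about (A3) addresses a difficulty that does not arise in this lemma.

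Second, and more seriously, the termination of your iteration is not justified. You assert that ``the Morse assumption on $F$ ensures that the critical values reached are isolated,'' but the Morse hypothesis is on $F$, not on $E$; it does not imply that critical values of $E$ are isolated, nor that $E$ has only finitely many critical points below $E(u)$. Compactness of the solution set alone gives you accumulation points, not finite termination, so your sequence $u=u_0<u_1<u_2<\cdots$ of unstable solutions could in principle be infinite, converging to an unstable $u_\infty$ from which you would have to start over. The paper sidesteps this by introducing the set $C^+$ of \emph{all} solutions $v$ with $u\le v\le c^+$ reachable from $h(\delta)$ by a continuous path of energy at most $E(h(\delta))$, proving $C^+\neq\emptyset$ via the flow, and then taking $v_{\min}$ to be an energy minimizer in $C^+$ (which exists by Schauder compactness, since everything is trapped between $u$ and $c^+$). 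If $v_{\min}$ were unstable, one more nudge--flow step would produce an element of $C^+$ with strictly smaller energy, contradicting minimality; hence $v_{\min}$ is stable and, by Lemma~\ref{stable2}, a constant local minimum of $F$. This single minimization replaces your unbounded iteration.
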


\begin{proof}

We do the argument assuming $u<c^+$. The case $c^-<u$ is analogous. 

Given $\delta>0$, consider the smooth path $h:[0,\delta)\to W^{1,2}(\Omega)$, given by $h(t)=u+t\phi$. By our choice of $\phi$, and since $u$ is an unstable critical point of $E$, we have 
\begin{enumerate}
\item [i.] $\frac{d}{dt}|_{t=0}E(h(t))=0$, and\\
\item [ii.] $\frac{d^2}{dt^2}E(h(t))|_{t=0}=\int_\Omega\phi(-\Delta \phi+F''(u_0)\phi)=-\lambda\int_\Omega \phi^2<0$,
\end{enumerate}
where $\phi> 0$ everywhere on $\Omega$ and $\lambda>0$. Therefore, choosing $\delta\in(0,1)$ small enough, we can assume $E(u)=E(h(0))$ is a strict local maximum of $E(h(t))$ on $[0,\delta]$, and that $u<h(t)<c^+$, for all $t\in[0,\delta]$. 

It remains to continuously extend the path from $h(\delta)$ to a stable constant $t^+$, with $u<t^+\leq s^+$ without increasing the energy above $E(h(0))$.

Let $C^+=C^+(h(\delta))$ be the subset consisting of all the $v\in W^{1,2}(\Omega)$ such that 
\begin{itemize}
\item $v$ is a solutions to \eqref{eq1},
\item $u\leq v\leq  c^+$ and
\end{itemize} there exists a continuous map $h^+:[\delta,1]\to W^{1,2}(\Omega)$ satisfying: 
\begin{itemize}
\item $h^+(\delta)=h(\delta)$,
\item $E(h(\delta)) \geq E(h^+(t))$, for $t\in [\delta,1]$ and
\item $h^+(1)=v.$
\end{itemize} We want to show that $C^+(\delta)\neq \emptyset$ and that the minimum of $E$ in $C^+(h(\delta))$ is attained by a stable solution. 

We first prove that $C^+$ is not empty. Since $u$ and $c^+$ are stationary solutions of \eqref{parabolic} with $u<h(\delta)<c^+$, it follows from Theorem \ref{flow} that there is a unique $v\in C([0,\infty):W^{1,2}(\Omega))$ which solves the parabolic equation \eqref{parabolic} with initial condition $h(\delta)$. By item c. of Theorem \ref{flow}, for a large $T^+$, $v(T^+)$ is arbitrarily close in $W^{1,2}(\Omega)$ to a solution $v_\infty$ of \eqref{eq1}. Item (b) of Theorem \ref{flow}, implies $E(v_\infty) < E(h(\delta))$ and by continuity we can assume that $v(T^+)$ belongs to a convex neighborhood of $v_\infty$ in which $E< E(h(\delta))$. The path $v:[0,T^+]\to W^{1,2}(\Omega)$ can be completed to a path arriving at $v_\infty$, simply by joining $v(T^+)$ to $v_\infty$ with a straight line in $W^{1,2}(\Omega)$. Finally, we reparametrize the junction of both paths over the interval $[\delta,1]$. This shows $v_\infty$ is in $C^+$.

Since solutions to \eqref{eq1} are bounded between $u$ and $c^+$, classical Schauder estimates imply that, after passing to a subsequence, an energy minimizing sequence $\{v_n\}_{n\in\N}$ in $C^+$ must converge to some $v_\min \in W^{1,2}(\Omega)$ which is also a solution of \eqref{eq1}, with $E(v)< E(h(\delta))$. By arguing as before, for $n$ larger enough $v_n$ is in a convex neighborhood of $v_\min$ in which $E< E(h(\delta))$. Therefore, the path joining $h(\delta)$ with $v_n$ can be completed to a path arriving at $v_\min$ through a straight line. This shows that $v_\min$ also belongs to $C^+$. 

Finally, notice that $v_\min \in C^+$ must be stable. Otherwise, we could repeat the argument and find a continuous path joining $v_\min$ to a solution energy strictly less than that of $v_\min$. Connecting this path with the one joining $h(\delta)$ with $v_\min$, we would contradict that $v_\min$ attains the minimum of $E$ on the set $C^+$. Therefore, $v_\min$ must be stable and by Lemma \ref{stable2} it is a constant equal to a local minimum of $F$ and of $E$. 
\end{proof}

The proof of the following lemma is elementary and it is left to the reader.

\begin{lemma}\label{fstar}
Let $F:\R\to\R$ be a Morse function with finitely many critical points. Let $c^+$ be the largest critical points of $F$ and assume that $c^+$ is a local maximum. Then, for any given $M>c^+$, there exists a Morse function $F^*:\R\to\R$ with finitely many critical points, such that:
\begin{enumerate}
\item $F^*=F$ on $(-\infty,M]$,
\item $F^*\geq F$ on $(M,+\infty)$,
\item $F^*$ only has one critical point $c^*$ on the interval $(M,+\infty)$, which is a local minimum. 
\end{enumerate}
\end{lemma}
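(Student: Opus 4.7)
The plan is to build $F^*$ as $F$ plus a non-negative, smooth correction $H$ supported in $[M,\infty)$ whose derivative is small compared to $|F'|$ just past $M$ but eventually dominates $|F'|$, so that $F^* = F + H$ acquires exactly one new critical point $c^* > M$, a non-degenerate local minimum.

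First I would record the key observation that makes the construction possible. Since $c^+$ is the largest critical point of $F$ and is a non-degenerate local maximum, $F'$ is nowhere zero on $(c^+,\infty)$; combined with $F''(c^+) < 0$, this forces $F' < 0$ on all of $(c^+,\infty)$. In particular the smooth function $\phi(t) := -F'(t)$ is strictly positive on $[M,\infty)$. Next I would fix any $c^* > M$ and any small $\eta \in (0, c^*-M)$, pick a smooth cutoff $\chi \colon \R \to [0,1]$ equal to $0$ on $(-\infty, M]$, equal to $1$ on $[M+\eta,\infty)$, and vanishing to infinite order at $M$ (a standard non-analytic bump construction), and choose $\beta > 0$ small enough that $\phi(t) + \beta(t-c^*) > 0$ for every $t \in [M+\eta, c^*]$; this is possible because $\phi$ attains a positive minimum on this compact set. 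Setting
\[
g(t) := \chi(t)\bigl(\phi(t)+\beta(t-c^*)\bigr), \qquad H(t) := \int_{-\infty}^{t} g(s)\,ds,
\]
one has that $g \geq 0$ is smooth and vanishes to infinite order at $M$, so $H$ is smooth, non-negative and identically zero on $(-\infty,M]$. Moreover $g < \phi$ on $(M, c^*)$, $g(c^*) = \phi(c^*)$, and $g > \phi$ on $(c^*,\infty)$.

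Finally I would set $F^* := F + H$ and verify the four properties of the lemma. Smoothness of $F^*$ is immediate, the equality $F^* = F$ on $(-\infty, M]$ and the inequality $F^* \geq F$ on $(M,\infty)$ follow from the support and sign of $H$. The derivative $F^{*\prime} = F' + g = g - \phi$ is strictly negative on $(M, c^*)$, vanishes at $c^*$, and is strictly positive on $(c^*,\infty)$, so $c^*$ is the unique critical point of $F^*$ in $(M,\infty)$. A short local computation near $c^*$, where $\chi \equiv 1$ and thus $g(t) = \phi(t) + \beta(t-c^*)$, yields
\[
F^{*\prime\prime}(c^*) = F''(c^*) + \phi'(c^*) + \beta = F''(c^*) - F''(c^*) + \beta = \beta > 0,
\]
so $c^*$ is a non-degenerate local minimum. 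The critical points of $F^*$ on $(-\infty, M]$ coincide with the finitely many critical points of $F$ and remain non-degenerate since $F$ is Morse, whence $F^*$ is itself Morse with finitely many critical points. The only step that requires any care is the smooth matching at the gluing point $t = M$, which is handled by insisting that the cutoff $\chi$ vanish to infinite order there; this is the one technical point worth making explicit and is the main (mild) obstacle in the construction.
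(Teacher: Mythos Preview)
The paper does not give a proof of this lemma; it simply declares the result elementary and leaves it to the reader. Your explicit construction is a correct and natural way to fill in the details: the key observation that $F'<0$ on $(c^+,\infty)$, the use of a cutoff to glue in a correction whose derivative eventually overtakes $-F'$, and the verification that the resulting unique critical point is a non-degenerate minimum are all sound.

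One small correction is needed. You choose $\beta$ so that $\phi(t)+\beta(t-c^*)>0$ on $[M+\eta,c^*]$, but then assert $g\geq 0$ everywhere. On the transition interval $(M,M+\eta)$ this is not guaranteed by your choice: nothing prevents $\phi(t)+\beta(t-c^*)$ from being negative there, which would make $g<0$ and could spoil $H\geq 0$ (hence property~(2)). The fix is trivial: take the positive minimum of $\phi$ on the compact interval $[M,c^*]$ instead of $[M+\eta,c^*]$ when choosing $\beta$. With that adjustment every step goes through exactly as you wrote.
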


\begin{remark}\label{remarkfstar}
Similarly, if $c^-$ is a local maximum, we could find $F^*$ that coincides with $F$ on the $[c^-,c^+]$ and that has at most one critical point to each side of this interval, which is a local minima. 
\end{remark}

\subsection{Proof of Theorem \ref{thmexistence}}\label{sec:existence}

By assumption $F$ has at least one local maximum $c$. From $F'(c)=f(c)=0$ and $F''(c)<0$ and classical properties of the Laplacian operator, it is easy to check that the constant function $u\equiv c$ is a solution to \eqref{eq1} and that for any constant direction $a\neq 0$, we have $E''(c)(a,a)<0$. Therefore, $u$ is an unstable solution, i.e. the set of unstable solutions \eqref{eq1} is not empty. 

Take a sequence of unstable solutions $\{u_n\}_{n\in\N}$ which is minimizing for $E$.  From Corollary \ref{a2compact} and Corollary \ref{a3compact}, after perhaps passing to a subsequence, these must converge to some $u$ which is also a solution of \eqref{eq1}. We just need to argue that $u$ is also unstable. This is a consequence of the fact that the set of stable solutions is isolated, since these are strict local minimizers. In fact, by item i) of Lemma \ref{stable2} a stable solution must be a local minimum of $F$.

\subsection{Proofs of Theorem \ref{thmgs1} and Theorem \ref{thmgs2}}\label{sec:mpc}

Let $u$ a ground state of \eqref{eq1}. We divide the proofs of these theorems into several cases depending on the hypothesis assumed on $F$. In each case, we first construct an optimal family for $u$, and then we prove $u$ is the solution of a mountain pass problem. The cases are presented in such a way that information from earlier cases can be used in later cases.\\

\noindent\textbf{Case 1: $F$ satisfies (A1)}. In this case, $c^\pm$ are both local minima of $F$. By the maximum principle we must have $c^-<u<c^+$, so we can apply Proposition \ref{optimalfam} twice to conclude there exist $t^-<t^+$, which are local minima of $E$ and $F$, and a family $h$ joining $t^-$ with $t^+$ optimal at $u$ with respect to $E$. Moreover, $c^-\leq t^-< h(t) < t^+\leq c^+$, in particular $t^-< u=h(0) < t^+$. This gives us the optimal family for this case.

Now, we check the hypothesis to apply the mountain pass theorem. By Item (1) in Remark \ref{remarkmp}, the inequality in the hypothesis of Theorem \ref{mountainpass} holds for our choice of $t^-$ and $t^+$. Let $E_0$ and $\Gamma$ be as in the statement of Theorem \ref{mountainpass}. Given $\{h_n\}_{n\in\N}\subset \Gamma$ we define its truncation to $[c^-,c^+]$ as $\tilde h_n(t)=\min(c^+,\max(c^-,h_n(t)))$. Since $F$ is decreasing on $(-\infty,c^-]$ and increasing on $[c^+,+\infty)$, it follows that after truncation its energy can only decrease, i.e. $E(\tilde h_n(t))\leq E(h_n(t))$. In particular, there exists $\{\tilde h_n\}_{n\in\N}\subset \Gamma$, such that $E_0=\lim_{n\to\infty}\sup_{t\in[-1,1]}E(\tilde h_n(t))$ and $c^- < \tilde h_n(t)<c^+$. Since $\{\tilde h_n\}_{n\in\N}$ is a family of bounded functions with bounded energy and $F$ is bounded from below, it is easy to see that their Sobolev norm is bounded. Then, a simple application of the Rellich-Kondrachov Compactness Theorem, gives us that the Palais-Smale condition holds along $\{\tilde h_n\}_{n\in\N}$. We can apply Theorem \ref{mountainpass} and conclude that there exists a critical point $u_{0}$ of $E$, with $E(u_{0})=E_0$. In addition, by Lemma \ref{stable2}, $E$ does not admit degenerate stable critical points, therefore Theorem \ref{mountainpass} implies $u_{0}$ is unstable. 

Finally, we must check that $u$ is also a solution of the mountain pass problem described in the previous paragraph and has Morse index 1. In fact, $u$ belongs to a family $h \in \Gamma$, which implies $E_0 \leq E(u)$. On the other hand, $u$ is a ground state and since $u_0$ is unstable, we must have $E(u)\leq E(u_{0})=E_0$. We conclude that $E(u)=E_0$ which is the mountain pass critical level. Finally, the existence of the family $h$ optimal at $u$ with respect to $E$, and Lemma \ref{morse}, imply that $u$ has Morse index 1. 

This proves both Theorem \ref{thmgs1} and Theorem \ref{thmgs2} when $F$ satisfies (A1).\\

\noindent\textbf{Case 2: $F$ satisfies (A2)}. If both $c^-$ and $c^+$ are stable we are in Case 1 above. Therefore, we can assume that $c^-$ is stable and $c^+$ is unstable (the remaining cases are similar and we comment on them at the end of this proof). Let $M_0$ be the one from Proposition \ref{apriori}. Then, $u\leq M_0$ and $c^+\leq M_0$. Fix $M>M_0$, and let $F^*$ and $c^*$ be given by Lemma \ref{fstar} for this choice of $M$. Let $E^*(v)=\int_{\Omega}\frac{|\nabla v|^2}{2}+F^*(v)$. Since $F$ and $F^*$ coincide in the range of $u$, it follows that $u$ is also an unstable critical point of $E^*$. From the construction of $F^*$ and Proposition \ref{apriori}, a function $v$ is an unstable critical point of $F$ if and only if it is a critical point of $F^*$. It follows that $u$ is also ground state for $F^*$. Since, $F^*$ satisfies (A1), we are in the situation of Case 1, which impliea Theorem \ref{thmgs2} for $F$.

The first remaining case is: $c^-$ unstable and $c^+$ stable, which is symmetric to what we just did. Finally, there is the case: both $c^-$ and $c^+$ are local maxima. To deal with this, we can use the estimates Proposition \ref{apriori} to argue as in Lemma \ref{fstar} also the left of $c^-$, (e.g. by applying it to $\tilde F(x)=F(-x)$ and then using $\tilde F^*$). See Remark \ref{remarkfstar}.

\

\noindent\textbf{Case 3: $F$ satisfies (A3).} As before, we first construct a family which is optimal at $u$ with respect to $E$. By the maximum principle $c^-<u$. If $u_\max<c^+$, then we can apply the exact same construction of Case 1, to obtain an optimal family joining $t^-<t^+$, stable critical points of $F$, such that $c^-\leq t^-<u<t^+<c^+$. If instead $c^-<c^+\leq u_\max$. Let $t^-$ be the largest stable critical point of $F$ such that $t^-<u$. Choose $M$, such that $M>u_{\max}\geq c^+$ and $F(t^-)\geq F(M)$, which exists by assumption (A3). Let $F^*$ and $c^*$ be given by Lemma \ref{fstar} for this choice of $M$. Since $u<M$ and $F=F^*$ on $(-\infty,M)$, it follows that $u$ and $t^-$ are critical points of $E^*$ and $t^-<u<c^*$. As in Case 1, applying Proposition \ref{optimalfam} we obtain a family $h$ joining $t^-$ and $c^*$ which is optimal at $u$ with respect to $E^*$. By item 2 of Lemma \ref{fstar}, we have $E^*\geq E$, and $E^*(u)=E(u)$ it follows that $h$ is also optimal at $u$ with respect to $E$.

Now we prove that $u$ is a mountain pass critical point. By Lemma \ref{lem_ps}, $F$ satisfies the Palais-Smale condition and by Remark \ref{remarkmp} we have a mountain pass barrier in both cases considered in the previous paragraph. As in Case 1, applying Theorem \ref{mountainpass} we obtain an unstable critical point at the same energy level as $u$. Once more, using Lemma \ref{morse}, $u$ has Morse index 1, and must be a mountain pass solution for paths joining $t^-$ with $t^+$, in the first case, and paths joining $t^-$ with $c^*$, in the second case. This finishes the proof of Theorem \ref{thmgs1} when $F$ satisfies (A3).


\section{Symmetry of ground states}\label{sec:sym}

In this section we prove Theorem \ref{thm_sphere}. We first discuss the symmetry of ground states assuming that $\Omega$ is the unit sphere $S^N$ in $\R^{N+1}$, and in the last subsection, the axial symmetry for ground states in the unit ball $B_1^{N}$ in $\R^{N}$, following \cite{BartschWethWillem}. 

\

\subsection{Symmetrization in the sphere} Fix $z_{0}$ in $S^{N}$ and write $d(y) = \dist(y,z_{0})$ for the geodesic distance in $\Omega = S^N$. Our strategy is the following is to work with a path which is optimal at a ground state. The \emph{symmetrization} of this path does not increase its energy. On the other hand, the optimality of the path guarantees that the solution must coincide with its symmetrization, thus proving the result. In order to do this, a key issue that must be addressed is the continuity of the symmetrized path.

\

One of the most natural notions of symmetrization in $S^N$ is the \emph{symmetric decreasing rearrangement}, which provides a \emph{radially symmetric} and decreasing function $u^*:S^N \to \R$, i.e. it can be written as a decreasing function of $d(y)$, and such that $\{u>t\}$ and $\{u^*>t\}$ have the same measure. 

\

Given a Borel function $u: S^{N}\rightarrow \R$, the associated \emph{distribution function} $\mathcal{V}_{u}: \R\rightarrow [0,\beta_N]$ is defined by 
	\begin{equation}\label{vu}
	 \mathcal{V}_{u}(s) := \int_{S^{N}} \Chi_{\{u>s\}}\,d\mathcal{H}^{N} = |\{x\in S^{N}: u(x) > s\}|.
	\end{equation}
	Denote by $R_{u}(s)$ the unique nonnegative real number such that 
	\begin{equation*}\label{ru}
		|B_{R_{u}(s)}| = \mathcal{V}_{u}(s),
	\end{equation*} 
	i.e., such that any geodesic ball of radius $R_{u}(s)$ has volume $\mathcal{V}_{u}(s)$.	The \emph{symmetric decreasing rearrangement} (or simply the \emph{symmetrization}) of $u$, with respect to $z_{0} \in S^{N}$, is the function $u^{*}: S^{N} \rightarrow \R$, defined by
	\begin{equation}\label{sdr}
	u^{*}(y) := \int_{\R}\Chi_{B_{R_{u}(s)}}(y)\,ds,
	\end{equation}
	
	We remark that, inasmuch as
\begin{equation*}
\Chi_{B_{R_{u}(s)}}(y) = \Chi_{\left\{\,t\in \R : \, R_{u}(t) > d(y)\,\right\}}(s) \quad \mbox{for all} \quad y \in S^{N}, s \in \R,
\end{equation*}
and $R_{u}$ is nonincreasing, we may express $u^*$ explicitly as
\begin{equation*}
u^{*}(y) = 	\sup\left\{s: \mathcal{V}_{u}(s)>|B_{d(y)}(z_{0})|\,\right\}.
\end{equation*}

The main properties of the symmetrization are summarized below.

\begin{proposition} \label{sdrproperties}
Let $\Omega=S^N$ and let $u \in L^1(\Omega)$. 

\begin{enumerate}
	\item For any $y,y'\in \Omega$,
		\[d(y) \leq d(y') \implies u^*(y)\geq u^*(y').\]
	In particular, $u^*$ is radially symmetric.
	\item For any Borel function $\Phi: \R \to \R$, if $\Phi(u) \in L^1(\Omega)$, then $\Phi(u^*) \in L^1(\Omega)$, and it holds
		\[\int_{\Omega}\Phi(u^*) = \int_{\Omega}\Phi(u).\]
	\item If $u \in W^{1,p}(\Omega)$ for some $1 \leq p \leq \infty$, then $u^* \in W^{1,p}(\Omega)$, and it holds
		\[ \|\nabla u^*\|_{p} \leq \|\nabla u\|_{p}.\]
	\item Assume that $u \in W^{1,p}(\Omega)$ for some $1<p<\infty$ and that the set $\{x \in \Omega : \nabla u(x) = 0 \}$ has zero $N$-dimensional Hausdorff measure. If
		\[\|\nabla u^*\|_p = \|\nabla u\|_p,\]
	then, there is an isometry $T$ of $S^N$ such that $u \circ T=u^*$.
\end{enumerate}
\end{proposition}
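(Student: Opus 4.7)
The plan is to prove the four assertions in sequence. Parts (1) and (2) are formal consequences of the definition, part (3) is the Pólya--Szegő inequality on the sphere, and part (4) is a rigidity statement whose proof is the main difficulty.

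For (1), I would unfold the supremum formula $u^{*}(y) = \sup\{s : \mathcal{V}_{u}(s) > |B_{d(y)}(z_{0})|\}$. If $d(y) \leq d(y')$ then $|B_{d(y)}(z_{0})| \leq |B_{d(y')}(z_{0})|$, so the set defining $u^{*}(y')$ is contained in the one defining $u^{*}(y)$, giving $u^{*}(y) \geq u^{*}(y')$. Radial symmetry follows immediately. For (2), I would first verify equimeasurability: by construction $\{u^{*} > s\} = B_{R_{u}(s)}(z_{0})$ has $\mathcal{H}^{N}$-measure $\mathcal{V}_{u}(s)$, hence $\mathcal{V}_{u^{*}} = \mathcal{V}_{u}$. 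A layer-cake argument, first for indicator functions $\Phi = \Chi_{(s,\infty)}$ and then extended by linearity and monotone convergence to general Borel $\Phi$, yields $\int_{\Omega} \Phi(u) = \int_{\Omega} \Phi(u^{*})$.

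For (3), I would combine the co-area formula with the spherical isoperimetric inequality of Lévy--Schmidt. Writing
\begin{equation*}
\int_{S^{N}} |\nabla u|^{p} = \int_{\R} dt \int_{\{u = t\}} |\nabla u|^{p-1} \, d\mathcal{H}^{N-1}
\end{equation*}
and using the identity $-\mathcal{V}_{u}'(t) = \int_{\{u=t\}} |\nabla u|^{-1} d\mathcal{H}^{N-1}$, valid for almost every $t$, Hölder's inequality yields
\begin{equation*}
\int_{\{u = t\}} |\nabla u|^{p-1} \, d\mathcal{H}^{N-1} \geq \frac{\mathcal{H}^{N-1}(\{u = t\})^{p}}{(-\mathcal{V}_{u}'(t))^{p-1}}.
\end{equation*}
The isoperimetric inequality replaces the numerator by $\mathcal{H}^{N-1}(\{u^{*} = t\})^{p}$, and since $|\nabla u^{*}|$ is constant along each level set of $u^{*}$, the right-hand side becomes exactly $\int_{\{u^{*} = t\}} |\nabla u^{*}|^{p-1} d\mathcal{H}^{N-1}$. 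Integration in $t$ gives the inequality.

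The main obstacle is part (4). The plan is to revisit the chain of inequalities in (3) and observe that $\|\nabla u^{*}\|_{p} = \|\nabla u\|_{p}$ forces, for almost every $t$, both equality in Hölder (so $|\nabla u|$ is constant along the level set $\{u = t\}$) and equality in the isoperimetric inequality (so $\{u > t\}$ is a geodesic ball around some center $z(t) \in S^{N}$). The assumption $|\{\nabla u = 0\}| = 0$ precludes plateaus on which the distribution function $\mathcal{V}_{u}$ is constant, so the family of superlevel sets varies monotonically in $t$ without gaps. A continuity-and-nesting argument then shows that $z(t)$ is constant in $t$: balls with distinct centers at nearby levels would fail the strict inclusion $\{u > t'\} \subset \{u > t\}$ for $t < t'$. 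Letting $T$ be any isometry of $S^{N}$ mapping $z_{0}$ to this common center, equimeasurability and the radial monotonicity of $u^{*}$ imply $u \circ T = u^{*}$. This is the spherical analogue of the Brothers--Ziemer rigidity theorem; it can be reproduced along these lines or, alternatively, invoked from the literature on rearrangements in symmetric spaces.
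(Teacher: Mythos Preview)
Your proposal is correct. The paper itself does not give a self-contained proof of this proposition: it observes that (1) follows directly from the definition, cites Baernstein's book for (2), and cites Brothers--Ziemer for (3) and (4). Your sketch for (3) via the co-area formula, H\"older, and the L\'evy--Schmidt isoperimetric inequality, and your rigidity argument for (4) via equality cases plus the nesting of superlevel balls, is precisely the Brothers--Ziemer strategy transplanted to the sphere; so you are supplying the content of the citations rather than taking a different route. One small point worth making explicit in (4): the hypothesis $|\{\nabla u = 0\}| = 0$ is what guarantees that $\mathcal{V}_u$ is absolutely continuous, so that the co-area identity $-\mathcal{V}_u'(t) = \int_{\{u=t\}} |\nabla u|^{-1}\,d\mathcal{H}^{N-1}$ holds for a.e.\ $t$ without a singular remainder---this is exactly where the argument would break down otherwise, and it is the crux of the Brothers--Ziemer contribution.
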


\begin{proof}The first property follows directly from the definition of $u^{*}$. The proof of (2) can be found in \cite{BaernsteinII}, see Proposition 1.18. Finally, the proof of (3) and (4) is due to Brothers and Ziemer \cite{BrothersZiemer}. 
\end{proof}

As a consequence, we see that
	\begin{equation}
	E(u^*) = \int_\Omega \frac{|\nabla u^*|^2}{2} + F(u^*) \leq \int_\Omega \frac{|\nabla u|^2}{2} + F(u) = E(u)
	\end{equation}
 for any $u \in W^{1,2}(\Omega)$. Moreover, $E(u^*)=E(u)$ if and only if $\|\nabla u\|_2 = \|\nabla u^*\|_2$. If $u$ is a non-constant solution to \eqref{eq1}, then $\{\nabla u = 0\}$ has zero $\mathcal{H}^N$ measure (see \cite{GarofaloLin}), so from Item (4) above, we get $u=u^*$, provided this equality holds.
 
 \

Consider the optimal path $h$ given by Theorem \ref{thmgs2}, and let $h^*:[-1,1] \to W^{1,2}(S^N)$ be its symmetrization given by $h^*(t) = (h(t))^*$. Even though $E(h^*(t)) \leq E(h(t))$ and $h^*( \pm 1) = h(\pm 1)$, it is not straightforward that $h^*$ is continuous, as $u \mapsto u^*$ is \emph{not} a continuous on $W^{1,2}(S^N)$. In fact, the continuity problem for the symmetrization was studied in $\R^N$ by Almgren and Lieb  \cite{AlmgrenLieb}. On one hand, they proved that the symmetrization is continuous as a map in $W^{1,2}$, exactly at functions that satisfy a condition called \emph{co-area regularity}, which is met for any $C^{N-1,1}_{\mathrm{loc}}$ function in $\R^N$ (see \cite[Theorem 5.2]{AlmgrenLieb}). On the other hand, they constructed a dense set of functions, all in $C^{N-1,\lambda}_{\mathrm{loc}}(\R^N)$, which are \emph{not} co-area regular.


We believe it is possible to extend the results of Almgren and Lieb to the symmetrization in $S^N$. Since the optimal path $h$ in Theorem \ref{thmgs2} is constructed using eigenfunctions of the stability operator and a parabolic flow, the function $h(t)$ is smooth (hence co-area regular) for every $t$, so it is natural to expect that the symmetrized path $h^*$ is continuous. However, we have opted for a more economical approach, using a simpler notion of rearrangement called \emph{polarization}. The relevant properties of polarization are that it approximates the symmetrization and at the same time is continuous in $W^{1,2}$. The use of polarizations to prove symmetry of solutions of partial differential equations has appeared in a number of works (see \cite{VanSchaftingenWillem, Enea1, BSPTW, Pacella}).

\subsection{Polarization} Remember we are in the case $\Omega=S^N$. Denote by $\mathcal{H}$ the family of closed halfspaces $H$ of $\R^{N+1}$, such that $0 \in \partial H$. For $H\in \mathcal{H}$ consider the reflection $\sigma_{H}: \R^{N+1}\rightarrow \R^{N+1}$ with respect to the hyperplane $\partial H$.  By fixing $H\in \mathcal{H}$ we can define the polarization of a function $u: \Omega \to \R$ with respect to the hyperplane $\partial H$ as the function $u_H:\Omega \to \R$ given by
	\begin{equation} \label{def:polarization}
u_{H}(x) = \left\{
\begin{array}{lcc}
\max\{u(x), u(\sigma_{H}(x))\},\quad  x\in \Omega\cap H \\
\min\{u(x), u(\sigma_{H}(x))\},\quad x \in \Omega \backslash H
\end{array}
\right.
	\end{equation}

The polarization compares the values of $u$ on both sides of $\partial H$, and keeps the larger value in $H$. We will denote by $\mathcal{H}_* \subset \mathcal{H}$ the set of closed halfspaces $H$ for which $z_0 \in H$. The following proposition gathers some useful properties of the polarization which will be used later in the paper. 

\begin{proposition} \label{prop:polar}
 Let $\Omega = S^N$ and let $H \in \mathcal{H}$.
	\begin{enumerate}
		\item For any Borel function $\Phi: \R \to \R$, if $\Phi(u) \in L^1(\Omega)$, then ${\Phi(u_H) \in L^1(\Omega)}$, and it holds
		\[\int_{\Omega}\Phi(u_H) = \int_{\Omega}\Phi(u).\]
		\item If $u \in W^{1,p}(\Omega)$ for some $1 \leq p \leq \infty$, then $u_H \in W^{1,p}(\Omega)$, and it holds
		\[ \|\nabla u_H\|_{p} = \|\nabla u\|_{p}.\]
	\end{enumerate}
\end{proposition}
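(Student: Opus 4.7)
The plan is to exploit the fundamental property that polarization simply permutes the values of $u$ on orbits of the reflection $\sigma_H$, and that $\sigma_H$ is an isometry of $S^N$ (hence measure preserving and preserving the Riemannian gradient norm under pull-back). Writing $\tilde u(x) := u(\sigma_H(x))$, the definition in \eqref{def:polarization} yields the key pointwise identity, valid for a.e.\ $x \in \Omega$:
\begin{equation*}
\{u_H(x),\, u_H(\sigma_H(x))\} = \{u(x),\, \tilde u(x)\}
\end{equation*}
as unordered pairs. All the claimed invariances will be reductions of this identity.

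For (1), I would split $\Omega = (\Omega \cap H) \cup (\Omega \setminus H)$, two sets of equal measure interchanged by the measure-preserving involution $\sigma_H$. The pointwise identity above gives
\begin{equation*}
\Phi(u_H(x)) + \Phi(u_H(\sigma_H(x))) = \Phi(u(x)) + \Phi(\tilde u(x)) \quad \text{a.e.}
\end{equation*}
Integrating this over $\Omega \cap H$, changing variables $y = \sigma_H(x)$ in the terms involving $\sigma_H(x)$, and using that $\sigma_H$ is volume preserving, yields $\int_\Omega \Phi(u_H) = \int_\Omega \Phi(u)$. The integrability statement follows from the same computation applied to $|\Phi|$.

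For (2), I would partition $\Omega \cap H$ into the measurable sets $A := \{u \geq \tilde u\}$ and $B := \{u < \tilde u\}$. On the (measure-theoretic) interior of $A$ one has $u_H = u$, whereas on the interior of $B$ one has $u_H = \tilde u = u \circ \sigma_H$; since $\sigma_H$ is an isometry, the chain rule gives $|\nabla u_H(x)| = |\nabla u(\sigma_H(x))|$ a.e.\ on $B$. The only delicate point is the transition set $\{u = \tilde u\}$: here I would invoke the classical fact that, for $v \in W^{1,p}(\Omega)$, $\nabla v = 0$ a.e.\ on $\{v = 0\}$, applied to $v = u - \tilde u$. This yields $\nabla u = \nabla \tilde u$ almost everywhere on $\{u = \tilde u\}$, so both candidate formulas for $\nabla u_H$ agree there and no pathology arises at the partition boundary. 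Altogether $|\nabla u_H|^p$ is a.e.\ equal to $|\nabla u|^p$ on $A$ and to $|\nabla u \circ \sigma_H|^p$ on $B$, and an analogous identity holds on $\Omega \setminus H$ after interchanging the roles via $\sigma_H$. Summing and performing the change of variables $y = \sigma_H(x)$ in the pulled-back integrals (which are pointwise equal by orthogonality of $d\sigma_H$) gives $\|\nabla u_H\|_p = \|\nabla u\|_p$, and the membership $u_H \in W^{1,p}$ follows since the piecewise expression above provides a weak gradient with finite $L^p$ norm.

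The main technical obstacle is the regularity issue at the coincidence set $\{u = \tilde u\}$, where $u_H$ is defined by a $\min/\max$ of Sobolev functions and a priori one must worry about the weak differentiability across this set. This is handled cleanly by the Stampacchia-type vanishing lemma mentioned above; the rest of the argument is an application of the symmetry of $\sigma_H$ and a careful change of variables.
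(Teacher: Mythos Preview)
Your proposal is correct and follows essentially the same approach as the paper, which only offers a one-line sketch: ``These properties can be easily proved noting that each $\sigma_H$ is an isometry, and that $u_H$ can be written explicitly in terms of $u$ and $u\circ \sigma_H$,'' together with a reference to Baernstein. Your argument is a careful fleshing out of exactly these two observations.
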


These properties can be easily proved noting that each $\sigma_H$ is an isometry, and that $u_H$ can be written explicitly in terms of $u$ and $u\circ \sigma_H$, see e.g. \cite[Sections 3.3 and Chapter 7]{BaernsteinII}. \\

As discussed above, another key feature of the polarization is

\begin{proposition} \label{prop:continuous}
	For any closed halfspace $H \in \mathcal{H}$, the map $u \in W^{1,p}(S^{N}) \mapsto u_H \in W^{1,p}(S^{N})$ is continuous.
\end{proposition}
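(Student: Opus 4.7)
The plan is to leverage the fact that polarization preserves both the $L^p$ norm of a function and the $L^p$ norm of its gradient (\cref{prop:polar}), and to combine this with a direct $L^p$-Lipschitz estimate, invoking the Radon--Riesz property of uniformly convex Banach spaces to upgrade to $W^{1,p}$-convergence. Throughout I will work with $1<p<\infty$ (the case $p=2$ being the one actually used later in the paper).

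For the $L^p$-continuity I would use the elementary pointwise inequalities $|\max(a,b)-\max(c,d)|\leq|a-c|+|b-d|$ and the analogous bound for $\min$. Applying these with $(a,b)=(u(x),u(\sigma_H(x)))$ and $(c,d)=(v(x),v(\sigma_H(x)))$, splitting the integral over $S^N\cap H$ and $S^N\cap H^c$, and using that $\sigma_H$ is a measure-preserving involution exchanging these two pieces, one obtains
\[
\|u_H-v_H\|_p \leq 2\,\|u-v\|_p \quad \text{for all } u,v\in L^p(S^N).
\]
Applying \cref{prop:polar}(1) with $\Phi(t)=|t|^p$ gives $\|u_H\|_p=\|u\|_p$, and combining with $\|\nabla u_H\|_p=\|\nabla u\|_p$ from part (2) shows that $u\mapsto u_H$ is $W^{1,p}$-norm-preserving.

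Now suppose $u_n\to u$ in $W^{1,p}(S^N)$. The Lipschitz estimate gives $(u_n)_H\to u_H$ in $L^p$, while norm preservation gives $\|(u_n)_H\|_{W^{1,p}}=\|u_n\|_{W^{1,p}}\to\|u\|_{W^{1,p}}=\|u_H\|_{W^{1,p}}$. In particular $\{(u_n)_H\}$ is bounded in the reflexive space $W^{1,p}$, so every subsequence has a further subsequence converging weakly in $W^{1,p}$; the $L^p$-limit forces the weak limit to be $u_H$, and a standard subsequence argument yields $(u_n)_H\rightharpoonup u_H$ weakly in $W^{1,p}$. Since $W^{1,p}$ is uniformly convex for $1<p<\infty$, weak convergence together with convergence of norms implies strong convergence $(u_n)_H\to u_H$ in $W^{1,p}(S^N)$ by the Radon--Riesz property.

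The main obstacle conceptually is that polarization is not linear and, written pointwise, involves multiplication by the non-smooth characteristic function $\chi_H$; attempting to differentiate $u_H$ term-by-term or to control weak gradients directly from the formula \eqref{def:polarization} is delicate, since the relevant jumps across $\partial H$ do not visibly cancel in the expression. The Radon--Riesz strategy sidesteps this entirely: it uses only the two scalar features (Lipschitz continuity in $L^p$ and $W^{1,p}$-norm preservation) together with the Banach-space geometry of $W^{1,p}$. The case $p=1$ would require a separate argument since $W^{1,1}$ is not uniformly convex, but this case is not needed in the sequel.
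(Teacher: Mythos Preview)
The paper does not actually supply a proof of \cref{prop:continuous}; it states the result as a known fact about polarization (alluded to earlier in the section with the remark that polarization ``is continuous in $W^{1,2}$'') and moves on. So there is no paper-proof to compare against line by line.

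Your argument is correct and is in fact the standard way this continuity is established in the literature (see e.g.\ Van Schaftingen or Baernstein). The three ingredients---the pointwise Lipschitz bound $|\max(a,b)-\max(c,d)|\le |a-c|+|b-d|$ giving $L^p$-continuity, the exact norm preservation from \cref{prop:polar}, and the Radon--Riesz property of the uniformly convex space $W^{1,p}$ for $1<p<\infty$---fit together exactly as you describe. One small comment: to invoke Radon--Riesz you implicitly need that the $W^{1,p}$-norm you use is uniformly convex; this holds for the norm $\bigl(\|u\|_p^p+\|\nabla u\|_p^p\bigr)^{1/p}$ because $W^{1,p}$ embeds isometrically as a closed subspace of the $\ell^p$-sum $L^p\times L^p(TS^N)$, which is uniformly convex. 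Your caveat about $p=1$ is apt and harmless for the application.
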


We now discuss the connection between symmetrization and polarization which will be used in the proof of Theorem \ref{thm_sphere}. First, we note that
\[ u = u_* \iff u=u_H, \quad \mbox{for all} \quad H \in \mathcal{H}_*.\]
In fact, if $H \in \mathcal{H}_*$ and $u=u_H$, then $u \geq u \circ \sigma_H$ in $\Omega \cap H$, and $u \leq u \circ \sigma_H$ in $\Omega \setminus H$. It this holds true for any $H \in \mathcal{H}_*$, then one readily checks that $u$ is radially symmetric and decreasing in the radial direction, so that $u=u^*$

Finally, we will use the fact that one may approximate the symmetric decreasing rearrangement $u^*$ by a sequence of polarizations strongly in $L^2$ norm.

\begin{theorem}[\cite{VanSchaf}] \label{approx-polar}
	Let $\Omega = S^N$. There exists a sequence $\{H_k\}$ in $\mathcal{H}_*$ such that, for any $u \in W^{1,2}(\Omega)$, if $\{u_k\} \subset W^{1,2}(\Omega)$ is defined by $u_0=u$ and
	\[u_{k+1} = (u_k)_{H_1,\ldots,H_{k+1}},\]
	then $\|u_k - u^*\|_{L^2(\Omega)} \to 0$ and $u_k \rightharpoonup u$ in $W^{1,2}(\Omega)$. 
\end{theorem}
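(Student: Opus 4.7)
The plan is to adapt the strategy of Van Schaftingen~\cite{VanSchaf}. First I would fix a countable dense family $\{G_j\}_{j \geq 1}$ in $\mathcal{H}_{\ast}$ (for the natural distance on unit normals) and let $\{H_k\}_{k \geq 1}$ be any enumeration in which every $G_j$ appears infinitely often. By \cref{prop:polar}, each polarization step preserves both the $L^p$ norms and the $L^p$ norms of the gradient, so $\|u_k\|_{W^{1,2}(\Omega)} = \|u\|_{W^{1,2}(\Omega)}$ for every $k$. The sequence is therefore $W^{1,2}$-bounded, and by Rellich--Kondrachov any subsequence has a further subsequence converging weakly in $W^{1,2}(\Omega)$ and strongly in $L^2(\Omega)$.

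The main analytic input is the contraction estimate: for any $w = w^{\ast}$ and $H \in \mathcal{H}_{\ast}$,
\[
\|u_H - w\|_{L^2(\Omega)}^{2} - \|u - w\|_{L^2(\Omega)}^{2} = -2\int_{\Omega} w\,(u_H - u) \leq 0,
\]
where I use $\|u_H\|_{L^2} = \|u\|_{L^2}$ from equimeasurability. Writing $u_H - u$ in terms of the swap on $B = \{x \in H : u(x) < u(\sigma_H x)\}$ and its reflection, a change of variables yields
\[
\int_{\Omega} w\,(u_H - u) = \int_B \bigl(w(x) - w(\sigma_H x)\bigr)\bigl(u(\sigma_H x) - u(x)\bigr)\,dx \geq 0,
\]
since $z_0 \in H$ gives $d(x, z_0) \leq d(\sigma_H x, z_0)$ for $x \in H$, hence $w(x) \geq w(\sigma_H x)$. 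Applying this with $w = u^{\ast}$, and also with a smooth, \emph{strictly} radially decreasing reference such as $w_0(x) = \cos d(x, z_0)$, shows that both $\|u_k - u^{\ast}\|_{L^2}$ and $\|u_k - w_0\|_{L^2}$ are non-increasing in $k$, hence convergent.

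The crux is to show that every strong $L^2$ subsequential limit $v$ of $\{u_k\}$ is invariant under polarization by every $G_j$. Fix $j$. Because $G_j$ occurs infinitely often, a standard diagonal extraction using the $L^2$-continuity of polarization (which follows from the pointwise Lipschitz dependence of $u_H$ on the pair $(u, u \circ \sigma_H)$, and also from \cref{prop:continuous}) produces a subsequence $\{u_{m_i}\}$ with $H_{m_i+1} = G_j$, converging strongly in $L^2$ to some $v'$, such that $u_{m_i+1} = (u_{m_i})_{G_j} \to v'_{G_j}$ in $L^2$. The Cauchy property of $\|u_k - w_0\|_{L^2}$ forces $\|v' - w_0\|_{L^2} = \|v'_{G_j} - w_0\|_{L^2}$, i.e.\ equality in the contraction estimate at $v'$; since $w_0$ is \emph{strictly} radially decreasing, the representation above then forces $v'(\sigma_{G_j} x) = v'(x)$ a.e.\ on $B' = \{x \in G_j : v'(x) < v'(\sigma_{G_j} x)\}$, giving $v' = v'_{G_j}$ a.e. The main obstacle is that this $v'$ depends on the subsequence; to draw the same conclusion for the original limit $v$, one runs the argument simultaneously for all $j$, using the closure of the set of subsequential $L^2$-limits and the fact that every $u_k$ has the same distribution function as $u$, a property which passes to $L^2$-limits. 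This is the technical heart of the proof, carried out in~\cite{VanSchaf}.

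Once $v$ satisfies $v = v_{G_j}$ for every $j$, $L^2$-continuity of polarization and density of $\{G_j\}$ in $\mathcal{H}_{\ast}$ yield $v = v_H$ for every $H \in \mathcal{H}_{\ast}$, hence $v$ is radially symmetric and decreasing from $z_0$. Combined with the equimeasurability of $v$ with $u$ noted above, this forces $v = u^{\ast}$. Every strong $L^2$ subsequential limit therefore equals $u^{\ast}$, so $u_k \to u^{\ast}$ strongly in $L^2(\Omega)$. Weak $W^{1,2}$-convergence to $u^{\ast}$ follows from uniform $W^{1,2}$-boundedness and uniqueness of the weak limit.
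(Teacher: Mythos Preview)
The paper does not prove this theorem: it is stated with a citation to \cite{VanSchaf} and used as a black box in the proof of \cref{thm_sphere}. There is therefore no ``paper's own proof'' to compare against. Your proposal is a faithful sketch of Van Schaftingen's argument, and the ingredients you identify (norm preservation from \cref{prop:polar}, the monotonicity estimate $\|u_H - w\|_{L^2} \le \|u - w\|_{L^2}$ for radially decreasing $w$, the strict version with a strictly decreasing reference $w_0$ to force $v = v_{G_j}$ at subsequential limits, and density of the $G_j$) are the right ones.

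Two minor comments. First, the statement as printed has a typo: the weak $W^{1,2}$ limit must be $u^{\ast}$, not $u$, exactly as you conclude; strong $L^2$ convergence to $u^{\ast}$ together with $W^{1,2}$-boundedness forces this. Second, in your subsequence step you extract, for each fixed $j$, a limit $v'$ depending on $j$, and then say ``the main obstacle is that this $v'$ depends on the subsequence''. The cleanest way to close this is to observe that the full sequence $\|u_k - u^{\ast}\|_{L^2}$ is non-increasing, hence convergent; so \emph{every} $L^2$-subsequential limit $v$ satisfies $\|v - u^{\ast}\|_{L^2} = \lim_k \|u_k - u^{\ast}\|_{L^2}$, a single number independent of the subsequence. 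Running the $w_0$-argument along any subsequence where $G_j$ occurs shows that limit equals $u^{\ast}$, and then monotonicity upgrades subsequential to full convergence. This avoids having to ``run the argument simultaneously for all $j$''.
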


\begin{proof}[Proof of Theorem \ref{thm_sphere} (2)] Without loss of generality, we may assume $S^N_R = S^N$. Let $u$ be an unstable solution of least energy. By Theorem \ref{thmgs2}, there exists $h \in \Gamma$ such that $h(0) = u$, and 
	\[E(h(t)) < E(u) = E_1 = \adjustlimits \inf_{\gamma \in \Gamma} \sup_{s \in [-1,1]} E(\gamma(s)), \ \mbox{for all} \ t \in [-1,1]\setminus\{0\}.\]
Given a halfspace $H\in \mathcal{H}_*$, let
	\begin{equation*}
	h_{H}(t): = (h(t))_{H},
	\end{equation*}
be the polarization of the path $h$ with respect to $H$. By Propositions \ref{prop:polar} and \ref{prop:continuous}, we have $h_{H} \in \Gamma$ and $E(\gamma_{H}(s)) = E(\gamma(s))$, for all $s \in[-1,1]$. Consequently,
	\begin{equation}\label{blau}
	E(u_{H}) = \sup_{s\in [-1,1]} E(h_{H}(s)) = \sup_{s\in [-1,1]} E(h(s)) = E_1
	\end{equation}
We claim that $h_{H}(0) = u_{H}$ is also a solution of \eqref{AC}. Indeed, if that is not the case, we can construct a new path $\bar{h} \in \Gamma$ by perturbing the original path in a neighborhood of $u_{H}$ in the direction of a function $\phi$ such that $\langle E'(u_{H}), \phi \rangle < 0$. Then, there exists a small $\delta >0$ such that $E(\bar{h}(t)) < E(h_{H}(t))$ for all $|t|<\delta$, and $E(\bar{h}(t)) \leq E(h_{H}(t))$, hence $\sup E(\bar h) < \sup E(h) = E_1$, which contradicts \eqref{blau}.
	
	By Theorem \ref{approx-polar}, there exists a sequence of closed halfspaces $\{H_{k}\}_{k}$ such that the sequence $\{u_k\}$ defined by $u_0 = u$ and $u_{k+1} := (u_{k})_{H_{1}\cdots H_{k+1}}$ converges to the symmetrization $u^{*}$ strongly in $L^{2}(S^{N})$. By the arguments above, we see that each $u_{k}$ is an unstable solution of \eqref{eq1}, with energy $E_1$ and constant $W^{1,2}$ norm. By the compactness of the solutions, after possibly passing to a subsequence, $u_{k}$ converges to a solution $\bar{u} \in W^{1,2}(S^{N})$ strongly in $W^{1,2}(S^{N})$. Therefore, $\bar{u} = u^{*}$ and 
	\begin{equation*}
	E(u^{*}) = \lim_{n\to\infty}E(u_{n}) = E(u).
	\end{equation*}
	Since $\int_{S^{N}} F(u^{*}) = \int_{S^{N}}F(u)$, for all $k$, we get $\int_{S^{N}}|\nabla u^{*}|^{2} = \int_{S^{N}}|\nabla u|^{2}$. Since $u$ is nonconstant, the set $\{x \in S^N : \nabla u(x) = 0\}$ has zero measure (see \cite{GarofaloLin}). By Proposition \ref{sdrproperties}, we see that $u$ and $u^{*}$ agree, up to ambient isometries, and hence $u$ is radially symmetric. \qedhere
\end{proof}

\begin{remark} \label{alternative-symmetry}
We believe that in the proof above $u=u_{H}$ should hold for any $H$, which would imply the rotational symmetry  and monotonicity of $u$. To achieve this, after proving that $u_H$ is a solution, one could try to follow the steps in \cite{BartschWethWillem}, where it is done for domains in $\R^N$ (see also \cite{VanSchaftingenWillem,Enea1}). In fact, in Subsection \ref{symball}, we use this approach to deal with the case of the Euclidean ball. These results rely on the strong maximum principle. The method we present above for the case of the sphere, is slightly different and relies on the unique continuation property and the rigidity of the \emph{P\'olya-Sz\"ego inequality}  (see Proposition \ref{sdrproperties}).
\end{remark}

\

\subsection{Symmetrization in the unit ball}\label{symball} Consider now the case $\Omega = B_1^{N}$. In this case, we do not expect that ground states of \eqref{eq1} have radial symmetry, so we need to introduce a different type of symmetrization.

Our geometric motivation is the connection between Neumann solutions to the Allen-Cahn equation \eqref{AC} and \emph{free boundary minimal hypersurfaces}. Solutions whose nodal sets accumulate on such minimal hypersurfaces (satisfying a nondegeneracy condition), were constructed by Pacard and Ritor\'e in \cite{PacardRitore}. Conversely, for families of solutions with bounded energy as $\e \downarrow 0$, the energy density accumulates on a (possibly singular) minimal hypersurface (see also \cite{MizunoTonegawa} for the case of Neumann solutions). Since least area free boundary minimal hypersurfaces in $B_1^N$ are flat equatorial disks, it seems reasonable to expect that the ground states in $B_1^N$ inherit this symmetry.

Consequently, they are expected to be \emph{foliated Schwarz symmetric}: this means that they are axially symmetric with respect to the axis generated by some $z_0 \in S^{N-1}$, and decreasing with respect to the polar angle from this axis. Similar symmetry results were proved in \cite{BartschWethWillem} and \cite{VanSchaftingenWillem}, for Dirichlet solutions, and in \cite{Enea1} for Neumann solutions and a sublinear potential with a unique critical point. 

As mentioned in Remark \ref{alternative-symmetry} above, once we characterize any ground state $u$ as mountain pass type solutions, one proves that its polarization $u_H$ is a solution for any half-space $H$. Using the argument of \cite{BartschWethWillem} (see Lemma 2.5 and Theorem 2.6), we see that $u=u_H$ for any half-space containing $z_0 \in S^{N-1}$ where $z_0 = \frac{x_0}{|x_0|}$ and $x_0 \in B_1^N\setminus \{0\}$ is such that
	\[u(x_0) = \max \{u(x) : x \in B_1^N, \ |x|=|x_0|\}.\]
This proves that $u$ is foliated Schwarz symmetric (with respect to the $z_0$), and finishes the proof of \ref{thm_sphere} (1).

\

\begin{remark}
We believe that proof of Theorem \ref{thm_sphere} for the sphere, can be adapted to the case of the Euclidean ball as well. The corresponding rearrangement notion is called \emph{cap symmetrization} \cite[\S 7.5]{BaernsteinII}, and it is defined as the symmetric decreasing rearrangement in each sphere in $B_1^N$ centered at the origin, see also \cite{Weth}. In this case, one still needs to show that the solution is \emph{nonradial} and derive a rigidity statement similar to item (4) in Proposition \ref{sdrproperties} for cap symmetrization under some condition on the nodal set of $\nabla u$ in each sphere. A related rigidity result was obtained for \emph{Steiner} symmetrization in \cite{CianchiFusco} (on the other hand, see \cite[Example 5.5]{Weth}).
\end{remark}

\begin{remark}
The results of \cite{WeiWinter} imply that for $f(t) = t-|t|^{p-1}t$, for $1<p<\frac{N+2}{N-2}$ (or $p>1$ for $N=1,2$), ground states are \emph{odd} with respect to the hyperplane orthogonal to the axis of symmetry. This implies that $\{u=0\}$ is precisely $\{x \in B_1^N : x_N=0\}$. We expect the same to be true for the Allen-Cahn equation. In the next section, we the analogous result in the case of $S^N$.
\end{remark}

\section{The case of the Allen-Cahn on \texorpdfstring{$S^N$}{S^N}}\label{sec:sphere}

We now turn to study the case of the Allen--Cahn equation
\begin{equation} \label{sec_eq_ac}
\e^2\Delta u - W'(u)=0
\end{equation}
with 
$W(u)=(1-u^2)^2/4$ in more detail.
In fact, some of our results apply to more general non-linearities.
In addition to our previous assumptions we assume from here on that the right hand side in \eqref{eq1} satisfies
\begin{itemize}
	\item[(i)] $f(t)=-f(-t)$ for any $t \in \mathbb{R}$.
	\item[(ii)] $-f(t)/t$ is non-increasing in $t \geq 0$.
\end{itemize}
Note that these hold in particular for the standard double well potential $W$.



Since $W$ satisfies (A1), we have that \cref{thmgs1} applies to \eqref{sec_eq_ac} whenever the ambient manifold satisfies (D).
Also observe that the set of solutions having positive energy is always nonempty, as $E_\e(0) = \frac{|M|}{4\e}$. Moreover, 
	\[E_\e(u) = \int_{M} \left(-\frac{\e}{2}u\Delta u + \frac{(1-u^2)^2}{4\e}\right) \, d\mathcal{H}^n = \frac{|M|}{4\e} - \frac{\int_{M} u^4}{\e} = E_\e(0) - \frac{\int_{M}u^4}{\e}.\]
for any solution $u$ of \eqref{AC}, so $E_\e(u)<E_\e(0)$, provided $u$ does not vanish identically.

From \cref{stableconst} any nonconstant solution of a semilinear elliptic PDE on a compact manifold with positive Ricci curvature is unstable.
 In particular, if a least energy solution is nonconstant, then \cref{thmgs1} implies that it is a min-max solution with Morse index 1, and there is an optimal path joining this solution to the absolute minimizers $\pm 1$ of $E_\e$. 

On the other hand, as noted in \cite{ACClosed}, for large $\e$, the only solutions of \eqref{AC} are the constant solutions. 
The proof is based on the following classical result, which holds for solutions to \eqref{eq1} under the assumptions on $f$ stated above.

\begin{theorem}[\cite{BrezisOswald}] \label{Dirichlet}
Let $\Omega \subset M$ be a domain with nonempty smooth boundary. The boundary value problem
	\[ \left\{ \begin{array}{rl} \e^2\Delta u- f(u) = 0 & \mbox{ in } \Omega\\ 
	u  > 0,& \mbox{ in } \Omega\\
	u = 0,& \mbox{ on } \partial \Omega.\\  \end{array}\right. \]
has at most one solution. Moreover a solution exists if and only if $\e<\lambda_1(\Omega)^{-1/2}$.
\end{theorem}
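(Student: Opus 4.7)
The plan follows the classical Brezis--Oswald strategy, handling uniqueness, sufficiency of $\e<\lambda_1(\Omega)^{-1/2}$ for existence, and its necessity separately. First rewrite the equation as $\e^2\Delta u+g(u)u=0$ with $g(t):=-f(t)/t$, which by hypothesis (ii) is non-increasing on $(0,\infty)$ and satisfies $g(0^+)=-f'(0)=1$ in the Allen--Cahn case at hand, where moreover $g(t)=1-t^2$ is strictly decreasing.

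For \emph{uniqueness}, I would invoke Picone's inequality: for $u>0$ smooth and $\varphi\in H^1_0(\Omega)$,
\[\int_\Omega|\nabla\varphi|^2\geq-\int_\Omega\frac{\Delta u}{u}\varphi^2.\]
Given two positive solutions $u_1,u_2$, applying this with $(u,\varphi)=(u_1,u_2)$ and then with $(u,\varphi)=(u_2,u_1)$, using the identity $\int|\nabla u_i|^2=\e^{-2}\int g(u_i)u_i^2$ (obtained by testing the PDE against $u_i$), and adding, I would obtain
\[\int_\Omega\bigl(g(u_1)-g(u_2)\bigr)\bigl(u_1^2-u_2^2\bigr)\geq 0.\]
Since $g$ is non-increasing and $u_i>0$, the integrand is pointwise nonpositive, so equality holds almost everywhere; the strict monotonicity of $g$ then forces $u_1\equiv u_2$.

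For \emph{sufficiency}, I would minimize $E_\e$ over $H^1_0(\Omega)$ by the direct method. Since $W\geq 0$ and has quartic growth, $E_\e$ is coercive and weakly lower semicontinuous on $H^1_0$, so a minimizer $u_0$ exists; replacing $u_0$ by $|u_0|$ and truncating between $0$ and $1$ (neither operation increases $E_\e$) yields $0\leq u_0\leq 1$. The second-order expansion
\[E_\e(\delta\phi_1)=E_\e(0)+\tfrac{\delta^2}{2}\bigl(\e^2\lambda_1-1\bigr)\|\phi_1\|_2^2+O(\delta^4)\]
lies strictly below $E_\e(0)$ whenever $\e^2\lambda_1<1$, so $u_0\not\equiv 0$. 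As $\e^2\Delta u_0=u_0^3-u_0\leq 0$ on $[0,1]$, the function $u_0$ is superharmonic, and the strong maximum principle yields $u_0>0$ in $\Omega$, giving the desired solution.

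For \emph{necessity}, assume $u>0$ solves the problem and test the equation against $\phi_1>0$, integrating by parts twice using the Dirichlet data on both $u$ and $\phi_1$, to get
\[-\e^2\lambda_1\int_\Omega u\phi_1=\int_\Omega f(u)\phi_1=-\int_\Omega g(u)u\phi_1,\]
whence $\int_\Omega(\e^2\lambda_1-g(u))u\phi_1=0$. If $\e\geq\lambda_1^{-1/2}$ then $\e^2\lambda_1\geq 1\geq g(u)$ pointwise, so the integrand is nonnegative; its vanishing combined with the strict monotonicity of $g$ near $0$ forces $u\equiv 0$, contradicting the assumed positivity. The main technical obstacle throughout is the rigorous justification of Picone up to $\partial\Omega$, since the test function $u_2^2/u_1$ need not a priori lie in $H^1_0(\Omega)$; the standard remedy is to apply the inequality to the regularized ratios $u_2^2/(u_1+\delta)$ (or to integrate over $\{u_1>\delta\}$) and pass to the limit $\delta\to 0^+$, using the Hopf boundary point lemma to control $u_2/u_1$ near $\partial\Omega$. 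All remaining steps are routine elliptic regularity.
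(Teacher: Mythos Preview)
The paper does not give its own proof of this theorem; it is quoted as a classical result of Brezis--Oswald and used as a black box. Your proposal correctly reproduces the original Brezis--Oswald argument (Picone's identity for uniqueness, direct minimization for existence, testing against the first eigenfunction for the necessity of $\e^2\lambda_1<1$), and the steps are sound for the Allen--Cahn nonlinearity used in the paper, where $g(t)=1-t^2$ is strictly decreasing. The only remark is that the theorem as phrased in the paper sits in a Riemannian manifold rather than a Euclidean domain, but every ingredient you invoke (Picone, the strong maximum principle, the Hopf lemma, variational characterization of $\lambda_1$) is available without change in that setting, so no new idea is required.
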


Consider now the case $M=S^{N}$, endowed with the round metric of constant curvature 1.
 It follows from the result above and the Faber-Krahn inequality that if \eqref{AC} has a nonconstant solution, then
	\[\e < \lambda_1(S^{N}_+)^{-1/2}= \frac{1}{\sqrt{N}},\]
where $S^{N}_+ = S^{N} \cap \{x \in \R^{n+1} : x_{N+1} >0\}$. Conversely, if this inequality holds, then we can find a solution of \eqref{AC} on $S^{N}$ whose nodal set is the equator $S^{n-1} \subset S^{N}$ by putting $u(x) = u_+(x)$, if $x \in S^{N}_+$, and $u(x) = -u_+(-x)$, otherwise, where $u_+(x)$ is the unique positive solution of the problem above on $\Omega=S^{N}_+$. Furthermore, the uniqueness part of Theorem \ref{Dirichlet} guarantees that $u_+$, and hence $u$, are radially symmetric.

We now collect some consequences of \cref{Dirichlet} and the maximum principle for rotationally symmetric solutions.
Before we start it is useful that assumption (ii) on $f$ above in particular implies that 
$$
f'(\theta t) \leq \theta f'(t)
$$
for $t \geq 0$ and $\theta \in (0,1)$.
This in turn implies that if $u$ is a non-negative solution to \eqref{eq1} then $\theta u$ is a subsolution.

\begin{lemma} \label{nodal-equator}
	Assume that $f$ is as above.
	Let $u \colon S^{N} \to \mathbb{R}$ be a non-constant solution to \eqref{eq1}, which is rotationally symmetric
	about the $e_{N+1}$-axis and has connected nodal set.
	Then $u$ is odd under the reflection at the hyperplane $\{x_{N+1}=0\}$.
\end{lemma}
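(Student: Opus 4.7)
The plan is to reduce the problem, via the uniqueness of positive Dirichlet solutions on spherical caps furnished by \cref{Dirichlet}, to a matching condition on the magnitude $D(r)$ of the radial derivative of the positive profile at the boundary of a cap of angular radius $r$, and then to close the argument by establishing that $D$ is strictly monotone in $r$.

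First I would show that the nodal set of $u$ is a single parallel. By rotational symmetry it is a union of parallels. The maximum principle together with the structure of the zeros of $f$ rules out the empty case, and a nodal set reduced to a pole is excluded as well: the radial ODE
\[u'' + (N-1)\cot\theta\, u' = f(u)\]
is regular-singular at $\theta = 0$, and the only solution regular there with $u(0)=0$ is $u \equiv 0$. Connectedness therefore yields $\{u=0\} = \{\theta = \theta_0\}$ for some $\theta_0 \in (0,\pi)$; using oddness of $f$, we may assume $u > 0$ on the northern cap $\{\theta < \theta_0\}$. By \cref{Dirichlet} the restriction of $u$ to this cap coincides with the unique positive Dirichlet profile, which we denote by $w_{\theta_0}$, while $-u$ on the southern cap (viewed from the south pole) coincides with $w_{\pi - \theta_0}$. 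Both profiles are automatically rotationally symmetric by uniqueness. Writing $D(r) := -w_r'(r) > 0$ for the magnitude of the inward radial derivative at the cap boundary, the $C^1$-smoothness of $u$ across $\{\theta = \theta_0\}$ translates into the matching condition
\[D(\theta_0) = D(\pi - \theta_0).\]

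The crux is to show that $r \mapsto D(r)$ is strictly monotone on its natural range. For $r_1 < r_2$, a Picone-type comparison — leveraging that $-f(t)/t$ is non-increasing by hypothesis (ii) — yields $w_{r_1} < w_{r_2}$ in the interior of $\{\theta < r_1\}$. Differentiating the family $\{w_r\}$ smoothly in the parameter $r$, the function $\phi := \partial_r w_r$ solves the linearized PDE, is positive in the interior by the above comparison, and satisfies $\phi(r) = D(r)$ at the boundary. A Hopf-type boundary argument applied to $\phi$ then delivers $D'(r) < 0$, hence strict monotonicity. Combined with the matching condition this forces $\theta_0 = \pi - \theta_0$, so $\theta_0 = \pi/2$. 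Finally, with $\theta_0 = \pi/2$, applying the uniqueness clause of \cref{Dirichlet} to both $u$ and $-u \circ \sigma$ on the hemisphere $\{\theta < \pi/2\}$, where $\sigma$ denotes the reflection across the equator, identifies the two functions, and the same reasoning on the lower hemisphere yields $u = -u \circ \sigma$ globally, i.e.\ $u$ is odd.

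The main obstacle is the strict monotonicity of $D$. The interior Picone comparison is clean, but the derivatives $D(r_1)$ and $D(r_2)$ are evaluated at \emph{different} boundary points $r_1$ and $r_2$, so a direct Hopf application to the difference $w_{r_2} - w_{r_1}$ does not suffice. The delicate point is to work with the parameter-derivative $\phi = \partial_r w_r$ and verify that it remains strictly positive up to and including the boundary, so that the boundary identity $\phi(r) = D(r)$ may be differentiated in $r$ cleanly.
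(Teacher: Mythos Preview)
Your reduction to the matching condition $D(\theta_0)=D(\pi-\theta_0)$ is correct and the strategy is natural, but the monotonicity of $D$ is not established --- and the sketch you give does not close it. The function $\phi=\partial_r w_r$ is strictly positive on the whole closed interval $[0,r]$, in particular $\phi(r)=D(r)>0$; since $\phi$ does not attain a boundary minimum, the Hopf lemma says nothing about the sign of $\phi'(r)$. Even granting a sign on $\phi'(r)$, differentiating $D(r)=-w_r'(r)$ in $r$ and using the radial ODE at $\theta=r$ gives
\[
D'(r)=-\phi'(r)-(N-1)\cot(r)\,D(r),
\]
so the sign of $D'$ is entangled with a curvature term that changes sign at $r=\pi/2$. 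The step ``a Hopf-type boundary argument applied to $\phi$ delivers $D'(r)<0$'' therefore has no content as written (and in fact the monotonicity goes the other way). You correctly flag this as the main obstacle; nothing in the proposal overcomes it.

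The paper takes a different route that bypasses $D$ altogether. If $\theta_0\neq\pi/2$, the smaller nodal cap $\Omega$ sits in an open hemisphere, so there is a rotation $R$ with $\Omega\cap R(\Omega)=\emptyset$ and $\partial\Omega\cap\partial(R(\Omega))=\{z\}$. On $\Omega'=R(\Omega)$ one then has two positive solutions: the transported cap solution (vanishing on all of $\partial\Omega'$) and $-u|_{\Omega'}$ (vanishing only at the single boundary point $z$). Using that $\theta$ times the cap solution is a subsolution for $\theta\le 1$, one slides $\theta\uparrow 1$ by the maximum principle and then applies Hopf at $z$ to obtain a \emph{strict} inequality between the two inward normal derivatives there --- but both equal $D(\theta_0)$ by construction, which is the contradiction. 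The same rotate-and-touch mechanism, run instead with \emph{internally} tangent caps of radii $r_1<r_2$, is precisely what would be needed to prove $D(r_1)<D(r_2)$ and rescue your approach; the linearization argument does not supply it.
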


\begin{proof}
	First note that the assertion follows from \cref{Dirichlet} once we know that $\{u=0\}=\{x_{N+1}=0\}$, since $u$ a solution if and only if 
	$-u$ is a solution thanks to the assumption on $f$.
	Moreover, note that \cref{thm_sphere} implies that the nodal set $\{u=0\}$ has to be connected, since $u$ is monotone.
	
	Let us assume now that $\{u=0\} \neq \{x_{N+1}=0\}$.
	By the rotational symmetry of $u$ it follows that $\{u=0\}$ is contained in the interior of a hemisphere.
	In particular, there is a connected component $\Omega$ of $S^{N} \setminus \{u = 0\}$ such that $\bar \Omega$ is contained
	in the interior of a hemisphere.
	We can thus find some rotation $R \in SO(N+1)$ such that
	$$
	\Omega \cap R(\Omega) = \emptyset \ \text{and} \ \partial \Omega \cap \partial (R (\Omega)) = \{z\}
	$$ 
	for some point $z \in S^{N}$.
	We write $\Omega' = R (\Omega)$.
	By our assumption on the potential, we then have functions $u,v \colon \Omega' \to \mathbb{R}$ both solving \eqref{eq1} and such that
	$u \geq 0$ in $\Omega'$, $u=0$ along $\partial \Omega'$, and $v > 0 $ in $\bar \Omega' \setminus \{z\}$, $v(z)=0$.
	(Here we use that $\bar \Omega$ is contained in the interior of a hemisphere.)
	But this can be seen to impossible using the maximum principle by an argument similar to that in \cite[Corollary 7.4]{gmn_ac_19}.
	
	Here are the details. 
	Since $v >0$ in $\bar \Omega' \setminus \{z\}$ and $\partial_\nu v (z) > 0$ for $\nu$ the outward pointing normal of $\Omega'$,
	we can choose $\theta < 1$ such that 
	\begin{equation} \label{eq_max_comp}
	\theta u < v  \ \text{in} \ \bar \Omega' \setminus \{z\}.
	\end{equation}
	As explained above, the assumptions on $f$ imply that $\theta u$ is a subsolution in $\Omega'$.
	Since $v$ is a solution, it follows from the maximum principle, that \eqref{eq_max_comp} continues to hold for $\theta = 1$.
	(Note that we can not have $\theta u = v$ for any $\theta$ by construction.)
	At this stage an application of the Hopf boundary lemma implies that 
	$$
	\partial_\nu u(z) < \partial_\nu v(z)
	$$
	which is impossible by construction.
\end{proof} 



\section{Integrability of the kernel and energy gap}\label{sec:gap}

In this section we prove \cref{gap}.
This follows from some fairly standard arguments once we have obtained the integrability of the kernel at rotationally symmetric, index $1$ solutions.
This is established in the following subsection.

\subsection{The kernel at a rotational symmetric solution}

The goal of this subsection is to show that any function in the kernel of the linearized operator at a rotationally symmetric 
solution $u$ with index $1$ and nodal set the equator, is generated by an ambient isometry, i.e.\ the kernel of the linearized operator is integrable.

We keep making the assumption of $f$ from the preceding section.
It is useful to note the second assumption implies that
 $$
 tf'(t)-f(t) \ \text{has a sign for} \ t>0.
 $$

Assume now that $u$ is a solution to \eqref{eq1} and denote by 
$$
L_u=-\varepsilon^2 \Delta + f'(u)
$$
the linearized operator at $u$.
If $X$ is a Killing vector field on $S^{N}$, then it is generated by a rotation with axis around some vector $v \in S^{N}$. 
The function $\phi_X=\langle X , \nabla u \rangle$ clearly lies in the kernel of $L_u$.
When $u$ is rotationally symmetric, say about the $e_{N+1}$-axis, and odd under the reflection at the hyperplane $\{x_{N+1}=0\}$, the nodal set 
$$
\{ \phi_X =0\} = S_X,
$$
where $S_X$ is an equator containing $v$ and $e_{N+1}$, provided $\phi_X \neq 0$.
The latter is equivalent to $v\neq \pm e_{N+1}$.

\begin{proposition} \label{prop_nullity}
Let $u$ be a non-constant, rotationally symmetric solution to \eqref{eq1} with index $1$ and nodal set the equator.
Then we have that
$$
\dim \ker L_u = N.
$$
\end{proposition}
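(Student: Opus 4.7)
The plan is to decompose $\ker L_u$ via separation of variables adapted to the rotational symmetry. Writing points of $S^N$ as $(\theta,\omega)$ with $\theta \in [0,\pi]$ the distance to $e_{N+1}$ and $\omega \in S^{N-1}$, and expanding in spherical harmonics $Y \in \mathcal H_k(S^{N-1})$ of eigenvalue $\mu_k = k(k+N-2)$ and multiplicity $m_k$, the operator $L_u$ decomposes (with multiplicity $m_k$) as a direct sum of the scalar Sturm-Liouville operators
\[
L_k R = -\varepsilon^2\bigl(R'' + (N-1)\cot\theta\, R'\bigr) + \bigl(\varepsilon^2 \mu_k \sin^{-2}\theta + f'(u)\bigr) R
\]
on $(0,\pi)$, so that $\dim\ker L_u = \sum_{k\geq 0} m_k \dim\ker L_k$ and $\operatorname{Morse}(L_u) = \sum_{k\geq 0} m_k \operatorname{Morse}(L_k)$.

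Next I would address all modes $k \geq 1$ together. The $N$-dimensional space of Killing vector fields on $S^N$ not preserving the axis $\pm e_{N+1}$ supplies kernel elements $\phi_X = \langle X, \nabla u\rangle$ which, after separating variables, take the form $u'(\theta)\,\omega_i$ with $\omega_i$ a basis of degree-$1$ harmonics on $S^{N-1}$; in particular $u' \in \ker L_1$. Since $m_0 = 1$ and $m_k \geq 2$ for $k \geq 1$ (with $m_1 = N$), the Morse index identity forces $\operatorname{Morse}(L_k) = 0$ for every $k \geq 1$ and $\operatorname{Morse}(L_0) = 1$. Combined with $u' \in \ker L_1$, Sturm oscillation then identifies $u'$ as the simple ground state of $L_1$, so $\dim\ker L_1 = 1$ and $u$ is strictly monotone in $\theta$. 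For $k \geq 2$, $L_k - L_1 = \varepsilon^2(\mu_k - \mu_1)\sin^{-2}\theta$ is a strictly positive multiplier, whence $L_k > 0$ and $\ker L_k = 0$.

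The main obstacle is to show $\ker L_0 = 0$. My approach is to exploit the parity symmetry $\theta \mapsto \pi - \theta$, under which $u$ is odd (by \cref{nodal-equator} and assumption (i)), so that $f'(u)$ is invariant and $L_0$ commutes with the reflection. By Perron-Frobenius the ground state of $L_0$ is simple and positive, hence even. The second eigenfunction has exactly one interior zero by Sturm oscillation; if it were also even, this zero would lie at the fixed point $\pi/2$ and the derivative would have to vanish there as well, contradicting uniqueness for the second-order linear ODE $L_0 v = \alpha v$ at the regular point $\pi/2$. Hence the second eigenfunction of $L_0$ is odd, and its eigenvalue coincides with the first radial Dirichlet eigenvalue of $L_+ = -\varepsilon^2\Delta + f'(u_+)$ on the hemisphere $S^N_+$, where $u_+ = u|_{S^N_+}$ is the unique positive solution supplied by \cref{Dirichlet}.

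To close, I would invoke the Brezis-Oswald-Barta argument: a direct computation gives $L_+ u_+ = u_+\bigl(f'(u_+) - f(u_+)/u_+\bigr)$, strictly positive in the interior (for the Allen-Cahn potential this equals $2u_+^3$), and testing against the positive first Dirichlet eigenfunction of $L_+$ yields $\lambda_1(L_+) > 0$. Hence the second eigenvalue of $L_0$ is strictly positive, $0$ is not in its spectrum, and $\ker L_0 = 0$. Summing the three contributions gives $\dim \ker L_u = 0 + N\cdot 1 + 0 = N$ as claimed. The delicate point is the parity analysis at $\theta = \pi/2$, which leverages Sturm oscillation, the reflection symmetry of $L_0$, and ODE uniqueness at the equator to transfer the question of degeneracy of the Neumann radial problem to the Dirichlet hemispherical problem where Brezis-Oswald applies.
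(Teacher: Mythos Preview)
Your proof is correct and takes a genuinely different route from the paper. You reduce everything to one-dimensional Sturm--Liouville problems $L_k$ via spherical-harmonic decomposition, then exploit the index constraint $1=\sum_k m_k\operatorname{Morse}(L_k)$ together with $m_k\geq 2$ for $k\geq 1$ (so $N\geq 2$) to force $\operatorname{Morse}(L_k)=0$ for $k\geq 1$; ordinary Sturm theory plus the potential comparison $L_k>L_1$ then dispatch all modes $k\geq 1$, and for $k=0$ your parity/ODE-uniqueness argument identifies the second eigenvalue with the first radial Dirichlet eigenvalue on the hemisphere, which the Barta/Brezis--Oswald test with $u_+$ shows is strictly positive. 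The paper instead stays on $S^N$ and argues with the reflections $r_v$, $v\in S^{N-1}\cup\{e_{N+1}\}$, and Courant's nodal domain theorem: odd kernel elements under $r_{N+1}$ are eliminated by the integration identity $\int_{S^N_+}(f'(u)u-f(u))\phi=0$ (the very same ingredient underlying your Brezis--Oswald step, but applied on the full PDE rather than the radial ODE), odd elements under each equatorial reflection are matched with the Killing-field contributions via a first-Dirichlet-eigenfunction argument, and any residual element---necessarily radial and even---is excluded by a nodal-domain count. Your approach is cleaner for the higher modes (a one-line potential comparison in place of the paper's reflection/Courant machinery) and delivers the strict monotonicity of $u$ as a byproduct; the paper's is more geometric and avoids separation of variables entirely, trading Sturm oscillation for repeated applications of Courant's theorem.
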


\begin{proof}
As above, after a rotation, we may assume that $\{u=0\} = \{x_{N+1}=0\}$, i.e.\ $u$ is rotationally symmetric about the $e_{N+1}$-axis and odd with respect to the reflection at the hyperplane $\{x_{N+1}=0\}$.
For $v \in S^{N}$ denote by $r_v \colon S^{N} \to S^{N}$ the reflection at the hyperplane $v^\perp$, i.e.\
$$
r_v(x) = x - 2 \langle x , v \rangle v.
$$
For simplicity we write $r_{j}=r_{e_j}$ for the reflection at the hyperplanes $\{x_j=0\}$.
Note that the function $W''(u)$ is invariant under $r_v$ if $v \in S^{N-1} = S^{N} \cap \{x_{N+1} =0 \}$, since $u$ is rotationally symmetric about the $e_{N+1}$-axis, or if $v=e_{N+1}$, since $u$ is odd with respect to $r_{N+1}$ and $W''$ is even.
Since each $r_v$ is also an isometry of $S^{N}$, we find that each $r_v$ commutes with $L_u$, in particular it acts on $\ker L_u$.
Moreover, each $r_v$ is an involution.
In particular, we can decompose $\ker L_u$ into the $\pm1$ eigenspaces of $r_v$.

\begin{claim} \label{claim_1}
The $-1$ eigenspace of $r_{N+1}$ acting on $\ker L_u$ is trivial.
\end{claim}

\begin{proof}[Proof of \cref{claim_1}]
Suppose we have $ \phi \in \ker L_u$ such that 
$$
\phi \circ r_{N+1}= - \phi.
$$
This implies that $\phi = 0$ along $\partial S^{N}_+$, where $S^{N}_+=\{x \in S^{N} \ : \ x_{N+1}>0\}$.

Now, we can multiply the equation \eqref{eq1} by $\phi$ on $S^{N}_+$ and integrate by parts to obtain
$$
\e^2 \int_{S^{N}_+} \nabla u \nabla \phi
=
- \e^2 \int_{S^{N}_+} \phi \Delta u 
 = 
 - \int_{S^{N}_+} f(u) \phi,
$$
since the boundary term vanishes as $\phi = 0$ along $\partial S^{N}_+$.
Similarly, we can use the equation for $\phi$ to find 
$$
\e^2 \int_{S^{N}_+} \nabla u \nabla \phi 
= 
- \e^2 \int_{S^{N}_+} u \Delta \phi
=
- \int_{S^{N}_+} f'(u) u \phi,
$$
using that $u=0$ along $\partial S^{N}_+$.
Combining both of these we arrive at
\begin{equation} \label{eq_null_1}
\int_{S^{N}_+} (f'(u) u - f(u)) \phi =0
\end{equation}
By our normalization of $u$ we have that $u > 0$ in $S^{N}_+$, hence the second  assumption on $f$ 
implies that also 
\begin{equation} \label{eq_null_2}
f'(u) u - f(u) > 0 \ \text{in} \ S^{N}_+.
\end{equation}
On the other hand, $u$ is assumed to have index $1$, we have that $\phi$ is a second eigenfunction of $L_u$.
By Courant's nodal domain theorem this implies that $S^{N} \setminus \{ \phi =0\}$ has precisely
two connected components.
As remarked above we also know that $\phi =0$ along $\partial S^{N}_+$.
Combining the last two pieces of information we find that 
\begin{equation} \label{eq_null_3}
\phi \geq 0 \ \text{in} \ S^{N}_+
\end{equation}
up to multiplying $\phi$ by $-1$.
Combining \eqref{eq_null_1},\eqref{eq_null_2}, and \eqref{eq_null_3}, we find that $\phi=0$, which is precisely our claim.
\end{proof}

We now prove a similar assertion for the reflections $r_v$ with $v \in S^{N-1}$.

\begin{claim} \label{claim_2}
Let $v \in S^{N-1}$, then the the $-1$ eigenspace of $r_v$ acting on $\ker L_u$ is one-dimensional.
\end{claim}

\begin{proof}[Proof of \cref{claim_2}]
Suppose that $0\neq \phi \in \ker L_u$ has $\phi \circ r_v = - \phi$ for some $v \in S^{N-1}$.
Then (up to multiplying $\phi$ by $-1$) it follows as in the proof of the first claim above by the Courant nodal domain theorem that 
$$
\phi > 0 \ \text{in} \ \{x \in S^{N} \ : \langle x , v \rangle > 0\}
$$ 
and  
$$
\phi = 0 \ \text{along} \ \langle v \rangle^\perp \cap S^{N}.
$$
This implies that $\phi$ is a \emph{first} Dirichlet eigenfunction for $L_u$ on the hemisphere $\{x \in S^{N} \ : \langle x , v \rangle \geq 0\}$.
Let us now choose some $w \in \langle v \rangle^\perp \cap \langle e_{N+1} \rangle^\perp \cap S^{N}$ and let $X$ be a non-trivial Killing field generated by a rotation fixing $w$, i.e.\ $X(w)=0$, then we have the corresponding function $\phi_X \in \ker L_u$ described earlier
\footnote{Note that different choices of the Killing field $X$ result in the same function $\phi_X$ up to scaling.}.
Moreover, we have that
$$
\phi_X > 0 \ \text{in} \ \{x \in S^{N} \ : \langle x , v \rangle > 0\}
$$ 
and  
$$
\phi_x = 0 \ \text{along} \ \langle v \rangle^\perp \cap S^{N}.
$$
In particular, it follows that $\phi_X$ is first Dirichlet eigenfunction as well.
This implies that $\phi \in \langle \phi_X \rangle$, which is what we claimed.
\end{proof}

\smallskip

Let us now finish the proof using the above two claims.
We denote by 
$$
V=\{ \phi_X \ : \ X \ \text{Killing field} \} \subset \ker L_u
$$
 the $n$-dimensional subspace spanned by the eigenfunctions generated by rotations. 
We want to show that the above inclusion is an equality.
Let $\phi \in V^\perp \subset \ker L_u$ and assume that $\phi \neq 0$.

Note that for $v \in S^{N-1} \cup \{e_{N+1}\}$  the reflections $r_v$ preserve $V$ and act by isometries on $\ker L_u$ (endowed with the $L^2$ scalar product).
In particular, each of these defines an involution on $V^\perp \subset \ker L_u$.
It follows from \cref{claim_1} and \cref{claim_2} that the $-1$ eigenspace of any of these has to be trivial.
This implies that $\phi \in V^\perp$ is symmetric about the $e_{N+1}$-axis and invariant under the reflection at $\{x_{N+1}=0\}$.
In particular 
$$
\phi = c \ \text{along} \  S^{N-1}
$$
for some constant $c \in \mathbb{R}$.
We can not have $c=0$,
since otherwise this would imply by the Courant nodal domain theorem that $\phi$ has a sign in the 
upper and lower hemisphere contradicting our computation from \cref{claim_1} (and also the Hopf boundary Lemma).
But then, since $\phi$ is not a first eigenfunction and invariant under $r_{N+1}$ there has to be some point $z=(z',z_{N+1}) \in S^{N}_+$ with $z_{N+1}>0$.
with $\phi(z)=0$.
Since $\phi$ is rotationally symmetric about the $e_{N+1}$ axis, this implies that the set 
$$
\{(z',z_{N+1}) \in S^{N} \} \subseteq \{\phi = 0\}. 
$$
Since $\phi$ is invariant under $r_{n+1}$ this implies that also
$$
\{(z',-z_{N+1}) \in S^{N} \} \subseteq \{\phi = 0\}. 
$$
But this implies that $S^{N} \setminus \{ \phi =0\}$ has at least three connected components contradicting Courant's nodal domain theorem if $\phi \neq 0$.
\end{proof}

\subsection{The energy gap}
Thanks to \cref{prop_nullity} we can now provide the argument for \cref{gap}.
We argue by contradiction, essentially exploiting the fact that we have shown that our discussion up to this point could be summarized as the Allen--Cahn functional being Morse-Bott near the energy level $a_\e$.
Alternatively, one could invoke an appropriate version of the implicit function theorem.

\begin{proof}[Proof of \cref{gap}]
Assume that we have a sequence $(v_{j})_{j \in \mathbb{N}}$ of solutions to \eqref{AC} with
$$
E_\e(v_{j}) > a_\e \ \text{and} \ \lim_{j \to \infty} E_\e(v_{j}) = a_\e
$$
Note that \cref{mainac} 
can be stated as
$$
A_\e =\{ u \in W^{1,2}(S^{N}) \ : \ u \ \text{solves} \ \eqref{AC} \ \text{and} \ E_\e(u)=a_\e \}
=
\{u_{0}\ \circ \ R \ : \ R \in O(N+1) \},
$$
where $u_0$ is rotationally symmetric, odd and monotone in the radial direction. In particular, the elements $A_\e$ are determined by where their maximum is, so $A_\e$ is a sphere of dimension $N$. In addition, since $A_\e$ is compact we have:
\begin{equation} \label{eq_footpt}
\inf_{R \in O(N+1)}\| v_{j} - (u_{0} \circ R) \|_{L^2}
=
\| v_{j} - (u_{0} \circ R_j) \|_{L^2}
\end{equation}
for some $R_j \in O(N+1)$.
Notice that since $u_0\circ R_j$ minimizes the distance from $v_j$ to $A_\e$, it follows that $v_j-u_0$ is orthogonal to the tangent space of $A_\e$. By composing everything with $R_j^{-1}$ we may assume that $R_j=\id$ for any $j \in \mathbb{N}$.  

By standard elliptic estimates, we have that 
$v_{j} \to v \ \text{in} \ C^{\infty} $ and $E_\e(v) = a_\e.$
It follows that 
$v=u_{0}.$
Consider now the sequence of functions
$w_{j} = \alpha_{j} (v_{j}-u_{0}),$
with $\alpha_{j}^{-1} = \| v_{j}-u_{0} \|_{L^2}$, so that $\|w_{j}\|_{L^2}=1$.
It follows from standard arguments
that 
$$
w_{j} \to \phi \in \ker L_{u_{0}},
$$
where the convergence is smoothly and hence $\|\phi\|_{L^2}=1$.
But by the choice \eqref{eq_footpt} we have that $\phi \perp \ker L_{u_{0}}$ thanks to \cref{prop_nullity}, which is a contradiction.
\end{proof}

Thanks to the Palais--Smale condition satisfied by the Allen--Cahn functional the proof of the gap for the Allen--Cahn widths does not rely on \cref{gap} contrary to the argument leading to the same assertion for the Almgren--Pitts widths.

\begin{proof}[Proof of \cref{thm_width_gap}]
Suppose that
$$
c_\e(1) = \dots = c_{\e}(N+2)
$$
for the Allen--Cahn widths on the sphere $S^N$.
Under this assumption it follows from \cite[Theorem 3.3 (2)]{ACClosed} that the cohomological $\mathbb{Z}/2$ index of the set
$$
K_{c_{\e}(1)} = \{ u \in W^{1,2}(S^N) \ : \ E_\e'(u)=0, E_{\e}(u)=c_{\e}(1) \}
$$
satisfies
\begin{equation} \label{eq_coh_ind}
\Ind_{\mathbb{Z}/2}(K_{c_\e(1)}) \geq N+2.
\end{equation}
On the other hand, we know that there is a $\mathbb{Z}/2$-equivariant map
$$
\phi \colon K_{c_\e(1)} \to S^N.
$$
Since $H^{nN+1}(S^N;\mathbb{Z}/2)=0$ this implies that
$$
\Ind_{\mathbb{Z}/2}(K_{c_\e(1)}) \leq N+1
$$
contradicting \eqref{eq_coh_ind}.
\end{proof}

\section{Bifurcation at the first positive critical level}\label{sec:bif}

In this section, we will study the bifurcation for solutions of \eqref{AC} on $S^3$ which occurs at $\e = \e_1 = (\lambda_1(S_+^3))^{-1/2}$. We recall that the only solutions of \eqref{AC} for any $\e \geq \e_1$ are the constants $\pm 1$ and $0$. Denote by $A_\e \subset W^{1,2}(S^3)$ the set of all unstable solutions of least energy in $S^3$. By Theorem \ref{mainac}, the set $A_\e$ is diffeomorphic to $S^3$. Our goal is to prove \cref{thm:bifurcation}.

We begin by constructing the families of nonradially symmetric solutions mentioned in (2). We regard $S^3$ as the set of all $(z,w) \in \C^2$ such that $|z|^2+|w|^2=1$. Let $\mathcal{T} \subset S^3$ be the Clifford torus, namely
	\[\mathcal{T} = \{ x \in S^3 : x_1^2+x_2^2 = 1/2 = x_3^2 + x_ 4^2\}.\]
It is a minimal surface and it bounds the solid torus
	\begin{equation} \label{handle}
		\Omega_{\mathcal{T}} = \{ x \in S^3 : x_1^2 + x_2^2 < 1/2\}.
	\end{equation}
Moreover, $\mathcal{T}$ is the nodal set of the restriction of the harmonic polynomial $p(x) = x_1^2 + x_2^2 - x_3^2 - x_4^2$ on $\R^4$ to the sphere. Since this restriction is an eigenfunction for $\Delta=\Delta_{S^3}$ with associated eigenvalue $\lambda_2(S^3)=8$, we get $\lambda_1(\Omega_{\mathcal{T}}) = \lambda_2(S^3)=8$. Finally, we note that $\mathcal{T}$ is invariant by the isometry
	\[s:(x_1,x_2,x_3,x_4) \in S^3 \mapsto (x_3,x_4,x_1,x_2) \in S^3,\]
which switches $\Omega_{\mathcal{T}}$ and the interior of its complement, and that $\Omega_{\mathcal{T}}$ is invariant by the isometries
	\[f_{\theta,\rho}: (z,w) \in S^3 \subset \C^2 \mapsto (e^{i\theta}z,e^{i\rho}w) \in S^3,\]
for all $\theta,\rho \in \R$. By the uniqueness of positive Dirichlet solutions, we conclude:

\begin{proposition} \label{clifford}
For any $\e \in (0,\e_2)$, there is a solution $u$ of \eqref{AC} whose nodal set is precisely the Clifford torus. Moreover, it is invariant by $f_{\theta,\rho}$, for all $\theta,\rho \in \R$, and it satisfies
	\[u(w,z) = -u(z,w), \quad \mbox{for all} \quad (z,w) \in S^3.\]
\end{proposition}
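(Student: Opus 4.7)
The plan is to build $u$ by solving the positive Dirichlet problem on $\Omega_{\mathcal{T}}$ and then extending antisymmetrically across $\mathcal{T}$ via the isometry $s(z,w)=(w,z)$. First I would apply \cref{Dirichlet} on $\Omega_{\mathcal{T}}$: since $\lambda_1(\Omega_{\mathcal{T}})=\lambda_2(S^3)=\e_2^{-2}$, the hypothesis $\e<\e_2$ produces a unique positive solution $u_+\colon \overline{\Omega_{\mathcal{T}}}\to[0,\infty)$ of $\e^2\Delta u_+-W'(u_+)=0$ with $u_+=0$ on $\partial\Omega_{\mathcal{T}}=\mathcal{T}$. Because each $f_{\theta,\rho}$ is an isometry of $S^3$ preserving $\Omega_{\mathcal{T}}$, the uniqueness part of \cref{Dirichlet} forces $u_+\circ f_{\theta,\rho}=u_+$ for all $\theta,\rho\in\R$; in particular $u_+$ depends only on $r=x_1^2+x_2^2$, so $u_+(x)=\phi(r(x))$ for some $\phi\colon[0,1/2]\to[0,\infty)$ with $\phi(1/2)=0$.

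Next I would define $u$ on $S^3$ by setting $u=u_+$ on $\overline{\Omega_{\mathcal{T}}}$ and $u=-u_+\circ s$ on $\overline{s(\Omega_{\mathcal{T}})}=\{r\geq 1/2\}$. Both pieces vanish along $\mathcal{T}$, so $u\in W^{1,2}(S^3)$, and because $s$ is an isometry and $W'$ is odd, each piece satisfies \eqref{AC} on its open domain. The main technical point is regularity of $u$ across $\mathcal{T}$: by a standard weak-formulation argument it reduces to showing that the normal derivatives of $u_+$ and $-u_+\circ s$ agree along $\mathcal{T}$. This is where the full rotational symmetry is used: writing $u_+=\phi(r)$, one has $u_+\circ s=\phi(1-r)$, so on $\mathcal{T}$ both $\nabla u_+$ and $-\nabla(u_+\circ s)$ equal $\phi'(1/2)\nabla r$. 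Hence $u$ is $C^1$ across $\mathcal{T}$, and then elliptic regularity upgrades this to a smooth global solution of \eqref{AC}.

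Finally, I would verify the remaining claims. The Hopf boundary lemma gives $u_+>0$ on $\Omega_{\mathcal{T}}$, so the nodal set of $u$ is exactly $\mathcal{T}$. For the $T^2$-invariance, one checks the identity $f_{\theta,\rho}\circ s=s\circ f_{\rho,\theta}$, combined with the $f_{\theta,\rho}$-invariance of $u_+$, to conclude $u\circ f_{\theta,\rho}=u$. The oddness $u(w,z)=-u(z,w)$ is immediate from $s(z,w)=(w,z)$ and the definition of $u$: if $(z,w)\in\Omega_{\mathcal{T}}$ then $(w,z)\in s(\Omega_{\mathcal{T}})$ and $u(w,z)=-u_+(s(w,z))=-u_+(z,w)=-u(z,w)$, with the opposite case handled symmetrically. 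The conceptually nontrivial step in this plan is the smooth matching across $\mathcal{T}$, and it is essentially the presence of the $T^2$-symmetry (reducing $u_+$ to a function of $r$ alone) that makes the reflected normal derivatives automatically compatible.
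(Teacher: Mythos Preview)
Your proof is correct and follows essentially the same approach as the paper: solve the Dirichlet problem on $\Omega_{\mathcal{T}}$ via \cref{Dirichlet}, use uniqueness to deduce $\SO(2)\times\SO(2)$-invariance of the positive solution, and extend by odd reflection through $s$. The only cosmetic difference is in the matching argument across $\mathcal{T}$: the paper observes that since $\SO(2)\times\SO(2)$ acts transitively on $\mathcal{T}$, the normal derivative of $v$ is constant along $\mathcal{T}$, which (together with $s$ being an isometry swapping the two sides) forces the one-sided normal derivatives to agree; you instead make this explicit by writing $u_+=\phi(r)$ and computing both gradients as $\phi'(1/2)\nabla r$. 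These are the same idea expressed at two levels of detail.
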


\begin{proof}
By Theorem \ref{Dirichlet}, there is a unique positive solution $v \in C^3(\Omega_{\mathcal{T}})$ of \eqref{AC} in $\Omega_{\mathcal{T}}$ which vanishes on $\partial \Omega_{\mathcal{T}} = \mathcal{T}$, provided $\e < \e_2$. Since $v \circ f_{\theta,\rho}$ also solves \eqref{AC} on $\Omega_{\mathcal{T}}$, we get $v \circ f_{\theta,\rho} = v$, for all $\theta,\rho \in \R$. Moreover,
	\[\SO(2)\times\SO(2) = \{f_{\theta,\rho}: (z,w) \mapsto (e^{i\theta}z,e^{i\rho}w) \, : \, (\theta,\rho) \in \R^2\}\]
acts transitively on $\mathcal{T}$, hence the normal derivative of $u$ is constant along the boundary $\mathcal{T}$.

Therefore, the solution $u:S^3 \to \R$ with the desired properties is given by
	\[u(x) = \left\{ \begin{array}{rl} v(x), & \mbox{if} \ x \in \bar\Omega_{\mathcal{T}}, \\ -v(s(x)), &\mbox{if}\ x \in S^3 \setminus \bar\Omega_{\mathcal{T}} \end{array} \right. .\qedhere\]
\end{proof}

Similarly, the set
	\[\mathcal{X} = \{x \in S^3 : x_3\cdot x_4 = 0\},\]
is the union of two orthogonal equators, and the nodal set of the restriction of the harmonic polynomial $x \mapsto x_3\cdot x_4$ to $S^3$. Since this polynomial is also a Laplace eigenfunction associated to $\lambda_2(S^3)$ and it is positive in the region $\Omega_{\mathcal{X}} = \{x \in S^3 : x_3>0, x_4>0\}$, we see that $\lambda_1(\Omega_{\mathcal{X}}) = \lambda_2(S^3)$. By Theorem \ref{Dirichlet}, there is a unique positive Dirichlet solution $u_{\mathcal{X}}$ of \eqref{AC} in $\Omega_{\mathcal{X}}$.\smallskip

Observe that $\Omega_{\mathcal{X}}$ is invariant by the isometry $t(x_1,x_2,x_3,x_4) = (x_1,x_2,x_4,x_3)$, which interchanges the two orthogonal equators in $\mathcal{X}$. It is also invariant by any $T \in \mathrm{O}(2)$ acting on the first two coordinates of $x \in S^3$. Hence $u_{\mathcal{X}} \circ t = u_{\mathcal{X}}$, and $u_{\mathcal{X}}$ depends on $x_3$ and $x_4$ only. Therefore, we can extend $u_{\mathcal{X}}$ to a solution $\bar u$ in $S^3$ by odd reflections across $S^3$, namely
	\[\bar u(x_1,x_2,x_3,x_4) = \sgn(x_3x_4) \cdot u_{\mathcal{X}}(x_1,x_2,\sgn(x_3)x_3,\sgn(x_4)x_4).\]
This concludes the construction of the second family of nonradially symmetric solutions for $\e<\e_2$, and finishes the proof of Theorem \ref{thm:bifurcation} (2).

In order to prove Theorem \ref{thm:bifurcation} (1), we will first rule out other radially symmetric solutions. More precisely,

\begin{lemma}\label{radsy} If $u$ is a nonconstant radially symmetric solutions of \eqref{AC} for $\varepsilon\in [\varepsilon_{2},\varepsilon_{1})$, then  $u \in A_{\varepsilon}$.
\end{lemma}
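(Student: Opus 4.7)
The plan is to reduce the question to a one-dimensional eigenvalue analysis on each nodal domain of $u$. Writing $u(x) = U(\theta)$ with $\theta \in [0,\pi]$ the geodesic distance from the axis of rotational symmetry, I would first argue that $U$ must vanish somewhere in $(0,\pi)$. Indeed, if $U > 0$ throughout, testing \eqref{AC} against $U - 1$ and integrating by parts on $S^3$ gives $\int_{S^3}(\varepsilon^2|\nabla U|^2 + U(U-1)^2(U+1))\,dx = 0$, which forces $U \equiv 1$ and contradicts nonconstancy (the case $U < 0$ is symmetric, and $U \equiv 0$ is excluded). Let $0 < \theta_1 < \cdots < \theta_k < \pi$ be the zeros of $U$; then $u$ has $k+1$ rotationally symmetric nodal domains, namely the two polar caps and the $k-1$ spherical annuli between consecutive latitudes.

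On each such nodal domain $\Omega$, $|u|$ is a positive Dirichlet solution of \eqref{AC}, so \cref{Dirichlet} forces $\varepsilon^2\lambda_1(\Omega) < 1$. Since $\Omega$ is rotationally symmetric and its first Dirichlet eigenfunction is unique up to scaling and everywhere positive, that eigenfunction must itself be radial. I would then compute $\lambda_1(\Omega)$ via the substitution $\psi = \phi\sin\theta$, which converts the radial equation $-\phi'' - 2\cot\theta\,\phi' = \lambda\phi$ into the one-dimensional Dirichlet problem $-\psi'' = (\lambda+1)\psi$ on the corresponding $\theta$-interval (smoothness of $\phi$ at a pole makes $\psi$ automatically vanish there). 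A direct computation then gives $\lambda_1(\Omega) = \pi^2/\ell^2 - 1$, where $\ell$ is the width of $\Omega$ (one of $\theta_1$, $\pi - \theta_k$, or $\theta_{j+1} - \theta_j$), so the constraint $\varepsilon^2\lambda_1(\Omega) < 1$ becomes $\ell > \pi\varepsilon/\sqrt{1+\varepsilon^2}$. Summing the $k+1$ widths, which partition $(0,\pi)$, produces $(k+1)\pi\varepsilon/\sqrt{1+\varepsilon^2} < \pi$, or equivalently $\varepsilon < 1/\sqrt{k(k+2)}$.

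Since $1/\sqrt{k(k+2)} \leq 1/\sqrt{8} = \varepsilon_2$ for every $k \geq 2$, the assumption $\varepsilon \geq \varepsilon_2$ forces $k = 1$, so the nodal set of $u$ is the single connected latitude $\{\theta = \theta_1\}$. \cref{nodal-equator} then applies and gives $\theta_1 = \pi/2$, with $u$ odd under the reflection at the equator; the restriction of $|u|$ to the open upper hemisphere is consequently the unique positive Dirichlet solution of \eqref{AC} from \cref{Dirichlet}, so $u$ coincides with the standard ground state constructed at the beginning of \cref{sec:sphere} and belongs to $A_\varepsilon$ by \cref{mainac}. The main technical step is the explicit eigenvalue formula $\lambda_1(\Omega) = \pi^2/\ell^2 - 1$ obtained via the substitution $\psi = \phi\sin\theta$, together with verifying that each pole yields a Dirichlet endpoint for $\psi$; everything else then follows by combining this formula with \cref{Dirichlet} and \cref{nodal-equator}.
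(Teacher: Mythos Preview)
Your proof is correct and follows the same overall strategy as the paper: bound the first Dirichlet eigenvalue on each nodal domain via \cref{Dirichlet}, conclude that the nodal set is a single geodesic sphere, and then invoke \cref{nodal-equator} together with the uniqueness part of \cref{Dirichlet}. The execution differs in a pleasant way. The paper treats the two polar caps and the intermediate region separately: it uses the exact formula $\lambda_1(B_\tau)=(\pi/\tau)^2-1$ to force each cap radius to exceed $\pi/3$, then nests any remaining nodal domain inside the symmetric segment $\Omega_{\pi/6}$ and appeals to the bound $\lambda_1(\Omega_{\pi/6})\geq 8$ obtained from Wirtinger's inequality. You instead observe, via the same substitution $\psi=\phi\sin\theta$, that the exact identity $\lambda_1(\Omega)=\pi^2/\ell^2-1$ holds uniformly for both caps and annuli, and then sum the resulting width constraints to get the sharp threshold $\varepsilon<1/\sqrt{k(k+2)}=\lambda_k(S^3)^{-1/2}$ when $U$ has $k$ interior zeros. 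This unified treatment is slightly more economical and, as a bonus, yields the quantitative statement that radially symmetric solutions with $k+1$ nodal domains can only exist once $\varepsilon<\lambda_k(S^3)^{-1/2}$; the only extra care required is verifying that the pole contributes a genuine Dirichlet endpoint for $\psi$, which you address.
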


Before proving the lemma above, we recall some facts about the first Dirichlet eigenvalue of certain domains in $S^3$. For any geodesic ball $B_\tau \subset S^3$, where $\tau \in (0,\pi)$, we have (see \cite{Bang})
	\[\lambda_1(B_\tau) = \left(\frac{\pi}{\tau}\right)^2 - 1.\]
In particular, $\lambda_1(B_\tau) \geq \lambda_2(S^3) = 8$ if, and only if, $\tau \leq \pi/3$. Consider also the spherical segment
	\[\Omega_h = \{ x \in S^3 : \dist(x,\{x_4=0\})< h\}= B_{\pi/2+h}(e_4) \setminus \bar B_{\pi/2-h}(e_4).\]
for $h \in (0,\pi/2)$. We claim that
	\[\lambda_1(\Omega_h) \geq \left(\frac{\pi}{2h}\right)^2 -1.\]
In fact, if $\phi$ is a positive eigenfunction corresponding to $\lambda_1(\Omega_h)$, then we may write $\phi(x) = f(r(x))$ for some function $f \in C^\infty([\pi/2-h,\pi/2+h])$ which vanishes on the boundary of its domain, where $r(x) = \dist(x,e_4)$. Then
	\[\lambda_1(\Omega_h) = \frac{\int_{\Omega_h}|\nabla \phi|^2}{\int_\Omega |\phi|^2} = \frac{\int_{\pi/2-h}^{\pi/2+h} \sin^2(t)f'(t)^2\,dt}{\int_{\pi/2-h}^{\pi/2+h} \sin^2(t)f(t)^2\,dt}.\]
Using
	\[{\int_{\pi/2-h}^{\pi/2+h} \left(\frac{d}{dt}(\sin(t)f(t))\right)^2\,dt} = {\int_{\pi/2-h}^{\pi/2+h} \sin^2(t)f'(t)^2\,dt} + {\int_{\pi/2-h}^{\pi/2+h} \sin^2(t)f(t)^2\,dt},\]
which follows by integration by parts and $f(\pi/2-h)=0=f(\pi/2+h)$, and Wirtinger's inequality, we obtain
	\[\lambda_1(\Omega_h) = \frac{{\int_{\pi/2-h}^{\pi/2+h} \left(\frac{d}{dt}(\sin(t)f(t))\right)^2\,dt}}{\int_{\pi/2-h}^{\pi/2+h} \left(\sin(t)f(t)\right)^2\,dt}-1 \geq \left(\frac{\pi}{2h}\right)^2 -1 .\]

\begin{proof}[Proof of Lemma \ref{radsy}]
By composing $u$ with an isometry, we may assume that $u$ is radially symmetric with respect to $e_4 \in S^3$. The nodal set $u^{-1}(0)$ is the union of concentric geodesic spheres centered at $e_4$, and, by the maximum principle, $u(\pm e_4) \neq 0$. Hence, the connected component of $\{u^{2}> 0\}$ containing $e_4$ is a geodesic ball $B_\tau=B_\tau(e_4)$. By Theorem \ref{Dirichlet}, we obtain $\varepsilon_{2} \leq\varepsilon < \lambda_1(B_\tau)^{-1/2}$, so $\lambda_1(B_\tau) < \lambda_2(S^3)$ and $\tau > \pi/3$. Similarly, the connected component of $\{u^2>0\}$ containing $-e_4$ is a geodesic ball of radius $>\pi/3$.

We claim that $u^{-1}(0)$ is connected. In fact, if this is not the case, then $\{u^2>0\}$ has a third connected component $\Omega$ which is contained in the spherical segment $\Omega_{\pi/6}$. Consequently,
	\[\lambda_1(\Omega)>\lambda_1(\Omega_{\pi/6}) \geq \frac{\pi^2}{(\pi/3)^2} -1 = 8 = \lambda_2(S^3).\]
But, using Theorem \ref{Dirichlet} again, we get $\varepsilon_2 \leq \varepsilon < \lambda_1(\Omega)^{-1/2} \leq \lambda_2(S^3)^{-1/2}$, which is a contradiction. This shows that $u^{-1}(0)$ is a single geodesic sphere. By Lemma \ref{nodal-equator}, we see that $u^{-1}(0) = \partial B_{\pi/2}=\{x_4=0\}$. Hence, the uniqueness part of Theorem \ref{Dirichlet} yields $u \in A_{\varepsilon}$.
\end{proof}

\begin{remark}
For sufficiently small $\e>0$, there are radially symmetric solutions of the Allen-Cahn equation on $S^3$ which are not in $A_\e$; see Example 1 in \cite{gmn_ac_19}. The previous lemma shows that $\e<\e_2$ whenever such solutions exists.
\end{remark}

We recall some facts about Lie group actions.  Let $G$ be a compact Lie group which acts differentiably on the right on a (Hilbert) manifold $\mathcal{M}$. For any $u \in \mathcal{M}$, denote by
		\[G_u = \{ g \in G : u \cdot g = u\} \quad \mbox{and} \quad u \cdot G = \{u \cdot g: g \in G\}\]
	the \emph{isotropy group} of $u$ and the \emph{orbit} of $u$, respectively. Then $G_u$ is a closed Lie subgroup of $G$, and the quotient manifold $G/G_u$ can be embedded into $\mathcal{M}$, with image $u\cdot G$. In particular, $G/G_u$ is diffeomorphic to $u \cdot G$ and
		\begin{equation} \label{DimensionsLie}
			\dim (u \cdot G) = \dim G - \dim G_u,
		\end{equation}
	see e.g. \cite[\S VI.1]{Bredon} for a proof. 
	
	The group of all orientation preserving isometries of $S^3$ is the special orthogonal group $\SO(4)$. Since any $T \in \SO(4)$ is a diffeomorphism of $S^3$, this group acts on the Sobolev space $W^{1,2}(S^3)$ on the right by composition, i.e. $u \cdot T = u\circ T$. This yields a differentiable action $W^{1,2}(S^{N}) \times \SO(4) \to W^{1,2}(S^{N})$. Moreover, $u\cdot T$ is a solution of \eqref{AC} whenever $u$ is a solution, and the orbit of any $u\in A_{\varepsilon}$ under this action is precisely the set $A_{\varepsilon}$.
	
We will need the following classification of Lie subgroups of $\SO(4)$.

\begin{theorem}[\cite{Wakakuwa}] \label{classificationSO4}
Let $G$ be a connected Lie subgroup of $\SO(4)$ of dimension $\geq 2$. Up to conjugation in $\SO(4)$, $G$ is one of the following subgroups:
\begin{enumerate}
	\item $\SO(4)$, if $\dim G = 6$;
	\item $\mathrm{U}(2)$ (unitary complex $2\times 2$ matrices acting on each $(z,w) \in S^3$ by matrix multiplication), if $\dim G=4$.
	\item $\mathrm{SU}(2)$ (unitary complex $2\times 2$ matrices with $\det =1$ acting on each $(z,w) \in S^3$ by matrix multiplication), or $\SO(3) = {\{T \in \SO(4) : Te_4=e_4\}}$, where $e_4=(0,i) \in S^3$, if $\dim G =3$.
	\item The torus $\SO(2)\times \SO(2)=\{f_{\theta,\rho} : \theta,\rho \in \R\}$, if $\dim G=2$.
\end{enumerate}
In particular $\dim G \neq 5$.
\end{theorem}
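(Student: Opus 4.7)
The plan is to classify connected Lie subgroups of $\SO(4)$ by first classifying Lie \emph{subalgebras} of $\mathfrak{so}(4)$ and then exponentiating: by the Lie correspondence, connected Lie subgroups of $\SO(4)$ correspond bijectively to Lie subalgebras of $\mathfrak{so}(4)$. The key input is the exceptional isomorphism $\mathfrak{so}(4) \cong \mathfrak{su}(2) \oplus \mathfrak{su}(2)$, reflecting the double cover $\mathrm{Spin}(4) = SU(2) \times SU(2) \to \SO(4)$. I would then apply a Goursat-style lemma to the right-hand side, exploiting that $\mathfrak{su}(2)$ is simple of dimension $3$, so that its Lie subalgebras have dimension $0$, $1$, or $3$ and its only ideals are $\{0\}$ and the whole algebra.

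Concretely, for a subalgebra $\mathfrak{h} \subset \mathfrak{su}(2) \oplus \mathfrak{su}(2)$, set $\pi_i$ for the two projections, $\mathfrak{h}_i = \pi_i(\mathfrak{h})$, and $\mathfrak{k}_i = \mathfrak{h} \cap (i\text{-th factor})$. The Goursat lemma yields that each $\mathfrak{k}_i$ is an ideal of $\mathfrak{h}_i$ and that $\mathfrak{h}_1/\mathfrak{k}_1 \cong \mathfrak{h}_2/\mathfrak{k}_2$. Running through the cases $(\dim \mathfrak{h}_1, \dim \mathfrak{h}_2) \in \{0,1,3\}^2$ together with the ideal constraint produces exactly the following subalgebras of dimension $\geq 2$: the full $\mathfrak{su}(2) \oplus \mathfrak{su}(2)$ (dim $6$); $\mathfrak{su}(2) \oplus \mathbb{R}$ or its mirror $\mathbb{R} \oplus \mathfrak{su}(2)$ (dim $4$); the two factor embeddings $\mathfrak{su}(2) \oplus \{0\}$, $\{0\}\oplus \mathfrak{su}(2)$ together with the graph of an automorphism $\mathfrak{su}(2) \to \mathfrak{su}(2)$ (dim $3$); and $\mathbb{R}\oplus\mathbb{R}$ (dim $2$). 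Dimension $5$ is ruled out because it would require $(\dim \mathfrak{h}_1, \dim \mathfrak{h}_2)=(3,3)$ with a two-dimensional common ideal quotient, which is impossible by simplicity of $\mathfrak{su}(2)$.

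The second half of the argument matches each subalgebra to the stated subgroup, up to $\SO(4)$-conjugacy. All one-dimensional subalgebras of $\mathfrak{su}(2)$ are inner-conjugate, so the torus subalgebra $\mathbb{R}\oplus\mathbb{R}$ is conjugate to the standard maximal torus and exponentiates to $\SO(2)\times\SO(2)$. Since every automorphism of $\mathfrak{su}(2)$ is inner, all graph subalgebras at dim $3$ are conjugate to the diagonal; its exponential in $\SO(4)$ fixes a distinguished vector, and after conjugation is $\{T \in \SO(4) : Te_4=e_4\} \cong \SO(3)$. The two factor embeddings at dim $3$ exponentiate to $SU(2)$ acting on $\mathbb{H} \cong \mathbb{R}^4$ by left, respectively right, multiplication. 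The dim-$4$ subalgebra $\mathfrak{su}(2)\oplus\mathbb{R}$ exponentiates to $U(2)$ acting on $\mathbb{C}^2 \cong \mathbb{R}^4$ in the standard way, with its mirror giving a second copy of $U(2)$. Finally, by the closed-subgroup theorem all of the listed immersed subgroups are closed and embedded in $\SO(4)$.

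The main subtlety, and the point I expect to be most delicate, is separating conjugacy in $\SO(4)$ from conjugacy in $O(4)$ or from mere abstract isomorphism at the Lie-algebra level. The outer automorphism of $\mathfrak{so}(4)$ that swaps the two $\mathfrak{su}(2)$ factors is realized only by an orientation-reversing isometry of $\mathbb{R}^4$, i.e.\ by an element of $O(4)\setminus\SO(4)$; this is precisely why the two $SU(2)$'s (and the two $U(2)$'s) appear as \emph{distinct} $\SO(4)$-conjugacy classes even though they are abstractly the same subgroup. I would verify that this outer automorphism is the only source of such discrepancy, so that Goursat's lemma combined with inner-conjugacy inside each simple factor accounts for all $\SO(4)$-conjugacy classes, completing the enumeration.
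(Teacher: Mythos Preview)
The paper does not supply its own proof of this theorem: it is quoted verbatim from \cite{Wakakuwa} and used as a black box in Lemmas \ref{IsotropyandSymmetry} and \ref{LargeIsotropy}. So there is no ``paper's proof'' to compare against.

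Your approach is the standard one and is correct. The exceptional isomorphism $\mathfrak{so}(4)\cong\mathfrak{su}(2)\oplus\mathfrak{su}(2)$ together with the Goursat-type analysis of subalgebras of a direct sum of simple algebras is exactly how this classification is usually derived; the exclusion of dimension $5$ follows immediately from the dimension count $\dim\mathfrak{h}=\dim\mathfrak{k}_1+\dim\mathfrak{h}_2$ combined with $\dim\mathfrak{h}_i\in\{0,1,3\}$. Your identification of the exponentiated subgroups is also correct, and your remark about closedness is fine here since each subalgebra in the list integrates to a compact (hence closed) subgroup.

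Your final paragraph in fact sharpens the statement as recorded in the paper: the two factor embeddings $\mathfrak{su}(2)\oplus\{0\}$ and $\{0\}\oplus\mathfrak{su}(2)$ (and likewise $\mathfrak{su}(2)\oplus\mathbb{R}$ versus $\mathbb{R}\oplus\mathfrak{su}(2)$) are swapped by the outer automorphism of $\mathfrak{so}(4)$, which is realized only by an element of $O(4)\setminus\SO(4)$. Thus there are really \emph{two} $\SO(4)$-conjugacy classes of $SU(2)$'s and two of $U(2)$'s, which the paper's phrasing ``up to conjugation in $\SO(4)$'' does not quite capture. This refinement is harmless for the application in Section \ref{sec:bif}, since both copies of $SU(2)$ and both copies of $U(2)$ act transitively on $S^3$, which is all that is used there.
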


\begin{lemma} \label{IsotropyandSymmetry}
Let $\e \in (\e_2,\e_1)$, and let $u$ be a nonconstant solution of \eqref{AC}. If the group
	\[G=\SO(4)_{u} = \{T \in \SO(4) : u \circ T = u\}\]
has dimension $\geq 2$, then $u \in A_\e$. 
\end{lemma}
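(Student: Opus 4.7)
The plan is to apply the classification in \cref{classificationSO4} to the identity component $G^0$ of $G$; since $u$ is automatically $G^0$-invariant and since the membership $u \in A_\e$ is $\SO(4)$-equivariant (as $A_\e$ is a single $\SO(4)$-orbit), we may after conjugation assume that $G^0$ is exactly one of the groups listed. If $G^0$ equals $\SO(4)$, $\mathrm{U}(2)$, or $\mathrm{SU}(2)$, then $G^0$ already acts transitively on $S^3$, forcing $u$ to be constant, which is a contradiction. In the case $G^0 = \SO(3) = \{T \in \SO(4) : Te_4 = e_4\}$, the function $u$ is invariant under every rotation fixing $e_4$, hence rotationally symmetric about $e_4$; then \cref{radsy} gives directly $u \in A_\e$.

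The main obstacle is the remaining case $G^0 = \SO(2) \times \SO(2) = \{f_{\theta,\rho}\}$. Here $u$ depends only on $t = |z|^2 \in [0,1]$, so $u(z,w) = h(t)$ for some smooth profile $h \colon [0,1] \to \R$. I would first dispose of the degenerate subcases in which $h$ has no interior zero on $(0,1)$. If $h$ never vanishes on $[0,1]$, then $u$ has constant sign on $S^3$, and the maximum principle applied to $\Delta u = u^3 - u$ at the extrema of $u$ forces $u \equiv \pm 1$. If instead $h$ vanishes only at the endpoints $t = 0$ or $t = 1$ (which correspond to the singular orbits, namely the circles $\{z = 0\}$ and $\{w = 0\}$), then up to sign $u \geq 0$ on $S^3$, the maximum principle forces $u \leq 1$ hence $\Delta u \leq 0$, and since $u$ attains the value $0$ on the singular orbit, the strong maximum principle for superharmonic functions forces $u \equiv 0$. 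Both outcomes contradict nonconstancy.

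The remaining (and main) subcase is that $h$ has interior zeros $t_1 < \dots < t_k$ in $(0,1)$, so the nodal set of $u$ is the nonempty union of Clifford-type tori $\{t = t_i\}$, and the connected components of $S^3 \setminus \{u = 0\}$ are open regions of the form $\{a < t < b\}$ bounded by one or two of these tori. On each such component, $|u|$ is a positive solution of \eqref{AC} with zero Dirichlet data, so \cref{Dirichlet} forces the first Dirichlet eigenvalue to satisfy $\lambda_1 < \e^{-2} < \e_2^{-2} = \lambda_2(S^3) = 8$. Since $t_1 \leq t_k$, we must have either $t_1 \leq 1/2$, so that the component $\{t < t_1\}$ is contained in $\{t < 1/2\}$, or $t_k \geq 1/2$, so that $\{t > t_k\}$ is contained in $\{t > 1/2\}$. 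Using that $\{t < 1/2\}$ and $\{t > 1/2\}$ are precisely the nodal domains of the second Laplace eigenfunction $p(x) = 2|z|^2 - 1$ of $S^3$, both have first Dirichlet eigenvalue equal to $\lambda_2(S^3) = 8$, so domain monotonicity produces a component with $\lambda_1 \geq 8$, contradicting the previous strict inequality and completing the proof.
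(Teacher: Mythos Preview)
Your proof is correct and follows essentially the same route as the paper: apply the classification of connected Lie subgroups of $\SO(4)$, eliminate the transitive cases, use \cref{radsy} for the $\SO(3)$ case, and rule out the torus case by trapping a nodal domain inside one of the Clifford handlebodies and invoking domain monotonicity together with \cref{Dirichlet}. Your treatment is in fact slightly more careful than the paper's in two respects: you work explicitly with the identity component $G^0$ (which is what the classification actually addresses), and you spell out via the strong maximum principle why the nodal set cannot consist solely of the singular circle orbits, a point the paper handles with a one-line appeal to ``the maximum principle.''
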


\begin{proof}
By Theorem \ref{classificationSO4}, we can't have $\dim G \geq 4$. In fact, after possibly replacing $u$ with $u \circ T$ for some isometry $T$ (and consequently $G_{u}$ with $T^{-1}G_{u}T$), $G$ is either $\SO(4)$ or $\mathrm{U}(2)$. Both groups act transitively on $S^3$, so $u$ would be constant. Hence, it suffices to consider the following cases:\\

\noindent \textit{Case 1:} $\dim G = 3$.

\noindent Again, after possibly composing $u$ with an isometry, we may assume $G$ is either $\mathrm{SU}(2)$ or $\SO(3)$. The former case cannot happen, as $\mathrm{SU}(2)$ also acts transitively on $S^{3}$. In the latter case, $G$ acts by rotations in each hyperplane orthogonal to $(0,0,0,1)\in S^{3}$, so $u$ is rotationally symmetric with respect to this point. By Lemma \ref{radsy} we get $u \in A_{\varepsilon}$.\\

\noindent \textit{Case 2:} $\dim G = 2$.

\noindent We will show that this case doesn't happen. By Theorem \ref{classificationSO4}, we may assume that $G$ is the group $\SO(2)\times \SO(2)$. The action of $G$ on $S^{3}$ has two orbit types: 2-dimensional tori given by the boundary of the region 
\[\Omega_r = \{(z,w) \in S^{3}: |z| \leq r\}, \quad \mbox{for some}  \quad  r\in (0,1),\]
and the circles $\{|z|=0\} \cap S^3$ and $\{|z|=1\}\cap S^3$. If $u \not\equiv \pm 1$, then $u$ changes sign and we may pick $x \in u^{-1}(0)$. By the maximum principle and $x \cdot G \subset u^ {-1}(0)$, the orbit $x\cdot G$ cannot be a circle. It follows that $u$ has a nodal domain $\Omega$ which is contained in either $\Omega_{\mathcal{T}}$ (see \eqref{handle}) or in its complement. In any case, we see that $\lambda_1(\Omega) \leq \lambda_1(\Omega_{\mathcal{T}}) = \lambda_2(S^3)$, so Theorem \ref{Dirichlet} implies $\e< \lambda_2(S^3)^{-1/2} = \e_2$, contradicting our assumption on $\e$.
\end{proof}

\begin{lemma}\label{LargeIsotropy}
Given any solution $u$ of \eqref{AC} for $\e \in (\e_2,\e_1)$, the group ${G=\SO(4)_u}$ has dimension $\geq 2$.
\end{lemma}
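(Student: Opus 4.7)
The plan is to bound the dimension of the $\SO(4)$-orbit through $u$ by comparing it against the nullity of the linearized operator $L_u = -\e^2\Delta + W''(u)$, and then bound that nullity from above by eigenvalue comparison against the constant-coefficient operator $-\e^2\Delta - 1$. Constants are trivial: each of $u \equiv \pm 1, 0$ is fixed by all of $\SO(4)$, so $\dim G = 6$. From now on assume $u$ is nonconstant; then \cref{stableconst} applied on the positively curved sphere $S^3$ gives that $u$ is unstable, so the Morse index $m(u) \geq 1$.

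The key observation is that $W''(t) = 3t^2 - 1 \geq -1$, which yields the pointwise operator inequality $L_u \geq -\e^2\Delta - 1$ as quadratic forms on $W^{1,2}(S^3)$. By the min-max characterization, $\lambda_k(L_u) \geq \e^2 \mu_k - 1$, where $\mu_k = k(k+2)$ are the Laplace eigenvalues of $S^3$ with multiplicities $(k+1)^2$. The hypothesis $\e \in (\e_2,\e_1)$ translates to $1/\e^2 \in (\mu_1,\mu_2) = (3,8)$, so the non-positive eigenvalues of $-\e^2\Delta - 1$ come exactly from $\mu_0$ and $\mu_1$, totaling $1 + 4 = 5$. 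Combined with the comparison above, this gives
\[m(u) + n(u) \leq 5, \quad \text{where} \quad n(u) := \dim \ker L_u,\]
and hence $n(u) \leq 4$.

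To finish, observe that the tangent space at $u$ to the orbit $u\cdot\SO(4)$ is spanned by $Xu = \langle X,\nabla u\rangle$ as $X$ ranges over Killing fields on $S^3$, and each such function lies in $\ker L_u$ because the Allen--Cahn equation is invariant under ambient isometries. Thus $\dim(u \cdot \SO(4)) \leq n(u) \leq 4$, and the orbit-stabilizer formula \eqref{DimensionsLie} yields $\dim G = 6 - \dim(u\cdot\SO(4)) \geq 2$.

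I expect no real difficulty here: the substantive inputs are the eigenvalue comparison and the identification of the orbit tangent space with a subspace of $\ker L_u$, both quite standard. The role of the restriction $\e > \e_2$ is very transparent in this argument: it is precisely what prevents the $9$-dimensional eigenspace at $\mu_2 = 8$ from contributing to the count of non-positive eigenvalues, so the argument (and indeed its conclusion) breaks down exactly at the second bifurcation threshold, consistent with the new non-radial families constructed for $\e < \e_2$ in \cref{thm:bifurcation}(2).
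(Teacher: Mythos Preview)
Your proof is correct and follows essentially the same route as the paper: the pointwise bound $W''(u)\geq -1$ gives the eigenvalue comparison $\lambda_k(L_u)\geq \e^2\lambda_k(S^3)-1$, the range $\e\in(\e_2,\e_1)$ forces $m(u)+n(u)\leq 5$, and combining $m(u)\geq 1$ with the inclusion of the orbit tangent space in $\ker L_u$ yields $\dim G\geq 2$ via orbit--stabilizer. The only cosmetic difference is that you invoke \cref{stableconst} to get $m(u)\geq 1$ for nonconstant $u$, whereas the paper cites \cite{FSV}; you also explicitly dispose of the constant solutions, which is a nice touch.
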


\begin{proof}
Since $W''(u) \geq -1 = W''(0)$, we have
	\[\int_{S^3} \e|\nabla \phi|^2 + \frac{W''(u)}{\e} \phi^2 \geq \int_{S^3} \e|\nabla \phi|^2 + \frac{W''(0)}{\e}\phi^2, \quad \mbox{for any} \quad \phi \in W^{1,2}(S^3).\]
If $L_u$ and $L_0$ are the linearizations of the Allen-Cahn operator at $u$ and $0$ respectively, the inequality above yields
	\[\lambda_k(L_u) \geq \lambda_k(L_0), \quad \mbox{for all} \quad k \in \N.\]
Since $L_0 = -\e^2\Delta + W''(0)= -\e^2 \Delta-1$, we see that $\lambda_k(L_0) = \e^2\lambda_k(S^3) -1$. Using that $\lambda_1(S^3)$ has multiplicity $4$ and $\e \in (\e_2,\e_1)$, we obtain
	\[\Ind(u) + \dim\ker(L_u) \leq \Ind(0) + \dim\ker(L_0) = 5.\]
By \cite{FSV}, we have $\Ind(u) \geq 1$, so $\dim\ker(L_u) \leq 4$. Since the tangent vector of any curve through $u$ in the orbit $u\cdot \SO(4)$ is an element in $\ker(L_u)$, we see that
	\[\dim(u\cdot \SO(4)) = \dim T_u(u\cdot \SO(4)) \leq \dim\ker(L_u) \leq 4.\]
Along with \eqref{DimensionsLie} and $\dim \SO(4) = 6$, this shows $\dim(\SO(4)_u) \geq 2$.
\end{proof}

\begin{remark}
The same argument gives a lower bound for the dimension isotropy group of a nonconstant solution $u$ of \eqref{AC} in $S^{N}$, for $\e$ in the range $(\lambda_2(S^{N})^{-1/2},\lambda_1(S^{N})^{-1/2})$. In fact, since $\lambda_1(S^{N})$ has multiplicity $n+1$ and $\dim \SO(N+1) = \frac{N(N+1)}{2}$, for any such $\e$ and $u$, the group $\SO(N+1)_u$ has dimension $\geq \frac{(N-2)(N+1)}{2}=\dim(\SO(N))-1$.
\end{remark}

As a consequence of Lemmas \ref{IsotropyandSymmetry} and \ref{LargeIsotropy}, we see that the only solutions of \eqref{AC} with $\e \in (\e_2,\e_1)$ are the constants $\pm 1$ and $0$, and the least positive energy solutions $A_\e$. This finishes the proof of Theorem \ref{thm:bifurcation}.

\bibliography{least-energy}
\bibliographystyle{acm}

\end{document}